\documentclass[12pt,reqno,a4paper]{amsart}
\usepackage{amsmath,amssymb,amsfonts,amscd}
\usepackage[mathscr]{eucal}
\usepackage{comment,textcomp}

\numberwithin{equation}{section}

\usepackage{hyperref}  

\topmargin=0cm 
\oddsidemargin=0cm \advance\textwidth by 1.4in
\evensidemargin=0cm 
\setcounter{tocdepth}{1} 

\date{25 June (8 Jule) 2018}

\author{Theodore Voronov}

\address{{School of Mathematics,  University of Manchester,    Manchester,   M13 9PL,  UK}}
\email{theodore.voronov@manchester.ac.uk}
\address{%
{Faculty of Physics, Tomsk State University, Tomsk, 634050, Russia}}

\title[Microformal geometry  and homotopy algebras]{Microformal geometry and homotopy algebras}




\newcommand{\new}[1]{{#1}} 

\newcommand{\LHS}{{LHS}}
\newcommand{\RHS}{{RHS}}

\newtheorem{theorem}{Theorem}

\newtheorem{lemma}{Lemma}
\newtheorem{proposition}{Proposition}
\newtheorem{corollary}{Corollary}

\newtheorem{conjecture}{Conjecture}

\theoremstyle{definition}
\newtheorem{definition}{Definition}
\newtheorem*{transflaw}{Transformation Law}
\newtheorem{example}{Example}
\newtheorem{remark}{Remark}

\def\co{\colon\thinspace}
\newcommand{\mathcall}{\EuScript}

\renewcommand{\leq}{\leqslant}
\renewcommand{\geq}{\geqslant}

\DeclareMathOperator{\Ber}{Ber}

 \DeclareMathOperator{\exph}{\exp_{\hbar}}
  \DeclareMathOperator{\lnh}{\ln_{\hbar}}
 
 \DeclareMathOperator{\sgn}{sgn}

 \DeclareMathOperator{\Ad}{Ad}

 \DeclareMathOperator{\SMan}{{\mathcall{SM}an}}
 
 \DeclareMathOperator{\EThick}{{{\mathcall{ET}hick}}}
 \DeclareMathOperator{\OThick}{{{\mathcall{OT}hick}}}

\DeclareMathOperator{\ad}{ad}

\DeclareMathOperator{\fun}{\mathit{C^{\infty}}}
\DeclareMathOperator{\funn}{\mathbf{C^{\infty}}}
\DeclareMathOperator{\pfunn}{\mathbf{\Pi\!C^{\infty}}}
\DeclareMathOperator{\funh}{\mathit{C^{\infty}_{\hbar}}}
\DeclareMathOperator{\funnh}{\mathbf{C^{\infty}_{\hbar}}}

\newcommand{\der}[2]{{\frac{\partial {#1}}{\partial {#2}}}}

\newcommand{\lder}[2]{{\partial {#1}/\partial {#2}}}
\newcommand{\dder}[3]{{\frac{\partial^2 {#1}}{\partial {#2}\partial {#3}}}}
\newcommand{\var}[2]{{\frac{\delta {#1}{ }}{\delta {#2}}}}

\newcommand{\R}[1]{{\mathbb R}^{#1}}
\newcommand{\RR}{\mathbb R}
\newcommand{\Z}{{\mathbb Z_{2}}}
\newcommand{\ZZ}{{\mathbb Z}}
\newcommand{\CC}{\mathbb C}

\newcommand{\p}{\partial}

\renewcommand{\a}{\alpha}

\newcommand{\e}{\varepsilon}

\renewcommand{\O}{\Omega}
\newcommand{\D}{\Delta}
\renewcommand{\o}{\omega}
\newcommand{\g}{{\gamma}}
\newcommand{\h}{\eta}
\renewcommand{\t}{\theta}
\newcommand{\F}{{\Phi}}

\newcommand{\la}{{\lambda}}

\renewcommand{\t}{\theta}

\newcommand{\ft}{{\tilde f}}

\newcommand{\kt}{{\tilde k}}

\newcommand{\itt}{{\tilde \imath}}

\newcommand{\at}{{\tilde a}}
\newcommand{\att}{{\tilde \alpha}}
\newcommand{\bt}{{\tilde b}}
\newcommand{\ct}{{\tilde c}}
\newcommand{\ut}{{\tilde u}}

\newcommand{\qt}{{\tilde q}}

\newcommand{\Dt}{{\tilde \Delta}}

\newcommand{\Abold}{{\mathbf{A}}}
\newcommand{\Bbold}{{\mathbf{B}}}

\newcommand{\abold}{{\mathbf{a}}}

\DeclareMathOperator{\Vect}{\mathrm{Vect}}

\DeclareMathOperator{\Mult}{\mathfrak{A}}

\newcommand{\lsch}{{[\![}}
\newcommand{\rsch}{{]\!]}}

\unitlength=1em

\newcommand{\tto}{{\linethickness{2pt}
		  \,\begin{picture}(1,0)
                   \put(0,0.26){\line(1,0){0.95}}
                   \put(0,0){$\boldsymbol{\rightarrow}$}
                  \end{picture}
                  }\,
}

\newcommand{\oto}{{\linethickness{0.5pt}
		  \,\begin{picture}(1,0)
		  \put(0.07,0.175){\line(0,1){0.2}}
                   \put(-0.01,0){$\boldsymbol{\Rightarrow}$}
                  \end{picture}
                  }\,
}

\newcommand{\ttto}[1]{\stackrel{#1}{\vphantom{\rightrightarrows}\tto}}

\newcommand{\tttoq}[1]{\stackrel{#1}{\vphantom{\rightrightarrows}\ttoq}}

\newcommand{\fto}[1]{\stackrel{#1}{\vphantom{\rightrightarrows}\to}}

\newcommand{\kir}{\boldsymbol{\kappa}}

\newcommand{\vp}{\boldsymbol{v}}
\newcommand{\ep}{\boldsymbol{e}}

\DeclareMathOperator{\gr}{graph}
\newcommand{\op}{\text{op}}
\newcommand{\infto}{\rightsquigarrow}

\DeclareMathOperator{\ofun}{\mathit{OC_{\hbar}^{\infty}}}
\newcommand{\ttoq}{\tto_q}

\newcommand{\dbar}{{\,\mathchar '26 \mkern-11mu d}}
\newcommand{\Dbar}{{\mathchar '26 \mkern-11 mu D}}

\newcommand{\f}{{\varphi}} 


\begin{document}
\begin{abstract}
We extend the category of (super)manifolds and their smooth mappings by introducing a notion of  microformal or ``thick'' morphisms. They are formal canonical relations of a special form,   constructed with the help of formal power expansions in cotangent directions. The result is a formal category in the sense that the its composition law is also specified by a formal power series. A microformal morphism acts on functions by an operation of pullback, which is in general a nonlinear transformation. More precisely, it is a formal mapping of formal manifolds of even functions (bosonic fields), which has the property that its derivative for every function is a ring homomorphism.  This suggests an abstract notion of a ``nonlinear algebra homomorphism'' and the corresponding extension of the classical ``algebraic-functional'' duality. There is a parallel fermionic version of these constructions.

The obtained formalism provides a general construction of   $L_{\infty}$-morphisms for functions on homotopy Poisson  ($P_{\infty}$) or homotopy Schouten   ($S_{\infty}$) manifolds  as   pullbacks by   Poisson microformal morphisms.  We also show that the notion of the adjoint linear operator can be generalized to nonlinear operators as a microformal morphism.   By applying this to    $L_{\infty}$-algebroids, we show that an $L_{\infty}$-morphism of  $L_{\infty}$-algebroids induces an    $L_{\infty}$-morphism of the  ``homotopy Lie--Poisson''  brackets for functions on the dual vector bundles. We apply this construction to higher Koszul brackets on differential forms and to triangular    $L_{\infty}$-bialgebroids. We also develop a quantum version (for the bosonic case), whose relation with the classical version is like that of the Schr\"{o}dinger equation with the Hamilton--Jacobi equation.  We show that the nonlinear pullbacks by microformal morphisms are the limits at   $\hbar\to 0$ of certain  ``quantum pullbacks'', which are defined as special form Fourier integral operators.
\end{abstract}

%


\maketitle
\tableofcontents

\section*{Introduction}

\subsection{Generalization of pullbacks and homotopy brackets} Constructing $L_{\infty}$-morphi\-sms between $L_{\infty}$-algebras is  in general  a difficult task; in some cases a particular example of an $L_{\infty}$-morphism can represent a solution of a highly non-trivial problem such as Kon\-tsevich's construction~\cite{kontsevich:quant} of deformation quantization of Poisson manifolds.

One of the results of this paper  is a general method giving    $L_{\infty}$-morphisms for $L_{\infty}$-algebras of functions. This is based on a certain extension or `thickening' of the usual category of smooth manifolds or supermanifolds.

It is well known that the  duality  of  the  geometric (``functional'') and algebraic viewpoints (see, e.g.,~\cite{shaf:a}) plays an important role in many mathematical theories, sometimes as a heuristic principle, and sometimes in the form of precise statements and constructions, such as the Gelfand  duality or Grothendieck's theory of schemes. By the geometric viewpoint, we mean   a picture based on ``spaces'' (in one or another sense), and by the algebraic viewpoint, a picture based on algebras, treated as algebras of functions. Under this duality, maps of spaces correspond to algebra homomorphisms, so that to a map there corresponds   the   pullback of functions, $\f^*\co g \mapsto \f^*(g)=g\circ \f$, which is a linear map  preserving the multiplication, i.e., a homomorphism. In the present paper, we  give constructions leading to a  nonlinear generalization  of such a  duality.

We  construct two formal categories extending     the 
category of smooth (super)manifolds and smooth maps,
with the same set of objects. Morphisms  $\Phi$ in these formal categories, which we call \emph{microformal} or  \emph{thick morphisms}, still act on smooth functions  by a generalization of pullbacks. A key ingredient in the construction is an equation of the fixed point type, whose solution is obtained by iterations.
Pullbacks by thick morphisms  $\Phi^*$ are formal \emph{nonlinear  differential operators}, represented by perturbative series around  ordinary pullbacks combined with additive shifts.  Nonlinearity is the distinctive property of these new pullbacks.  Similar equations and perturbative series arise for the composition law of thick morphisms (which is therefore formal).

Because of the nonlinearity, we have to distinguish   functions that are odd or even parity in the sense of the  $\Z$-grading as they have different commutativity properties.
That is why   there are  two formal categories, so that morphisms in one of them  denoted $\EThick$  induce pullbacks of even functions   (``bosonic fields''), while morphisms in another one denoted  $\OThick$ induce pullbacks of odd functions (``fermionic fields''). They are obtained by parallel constructions. Each of them contains the semidirect product   category  $\SMan\rtimes \funn$ or  $\SMan\rtimes \pfunn$, respectively, as a closed subspace and can be regarded as its formal neighborhood. (Here $\SMan$ is the ordinary category of supermanifolds and $\funn$ or $\pfunn$ are the spaces  of even or odd functions, on which smooth maps act by pullbacks.) There are embedding and retraction functors $\SMan\rtimes \funn \rightleftarrows \EThick$ and $\SMan\rtimes \pfunn \rightleftarrows \OThick$\,.

`Nonlinear pullbacks' were   first introduced by us in~\cite{tv:nonlinearpullback}  for the purpose  of construction of  $L_{\infty}$-morphisms of  homotopy Poisson algebras of functions  (motivated by a problem for higher Koszul brackets~\cite{tv:higherpoisson}). Such an $L_{\infty}$-morphism  by definition should be a nonlinear  map  of   functional supermanifolds, so it certainly cannot be a usual pullback. The idea of the   construction of a   ``nonlinear pullback'' was inspired by the  cotangent philosophy  of Kirill Mackenzie~\cite{mackenzie:book2005}.
As we showed,  these newly defined pullbacks with respect to thick morphisms indeed give the desired solution for homotopy Poisson brackets. Namely, if a   thick morphism $\Phi$ is \emph{Poisson}, which means that the master Hamiltonians or multivector fields specifying   homotopy Schouten or Poisson structures are $\Phi$-related  (a condition expressed in coordinates by a Hamilton--Jacobi type equation), then the pullback   {map} $\Phi^*$ is an $L_{\infty}$-morphism of the algebras of functions.

\subsection{Nonlinear algebro-functional duality} As the pullback  with respect to a thick morphisms is a nonlinear transformation, it cannot be a ring homomorphism  in the ordinary sense. It turns out, however, that its   \emph{derivative} at each function    will be a ring homomorphism!
Besides that, in spite of the nonlinearity, the pullbacks  themselves exhibit some kind of 
duality similar to the classical case.  For ordinary smooth maps, it is known that the   pullbacks on functions determine a map completely; in particular, giving the pullbacks of coordinate functions is the same as specifying a  map   in coordinates. Similarly for   a thick morphism, although it is not sufficient to know the images  of  individual  coordinate functions, it is sufficient however to know the images of  their  linear combinations $\Phi^*[y^i c_i]$
with arbitrary parameters $c_i$.
Another example of such a  ``nonlinear extension''  from multiplicative generators is given by the   pushforward  of functions on the dual vector spaces or vector bundles  by a nonlinear bundle  map. We introduce it as the pullback   with respect to the `adjoint operator'\,---\,which, as  we show, can be defined  for a nonlinear map, but as a thick morphism rather than an ordinary map; as we show, on vectors or on sections of the original bundle  this pushforward agrees with a given nonlinear mapping.

Algebraic properties of nonlinear pullbacks suggest the following abstract framework. For algebras $A$ and $B$, define a \emph{nonlinear homomorphism} as  a smooth map of vector spaces $\a\co A\to B$ such that the derivative
$T\a(a)\co A\to B$
at each $a\in A$ is an algebra homomorphism in the ordinary sense. (For superalgebras, one has to consider a map $\a\co \Abold\to \Bbold$ of the associated `linear supermanifolds' $\Abold$ and $\Bbold$.) Similarly  \emph{formal  homomorphisms} are defined. These notions should lead us to a nonlinear generalization of the algebro-functional duality.

Question: does every nonlinear (or formal)  homomorphism between the algebras of smooth functions on (super)manifolds arise as the nonlinear pullback induced by some thick morphism? A positive answer would be a nonlinear counterpart of the well-known statement for ordinary homomorphism and ordinary smooth maps.

\subsection{Idea of construction} For constructing the formal categories $\EThick$ and $\OThick$  and   nonlinear pullbacks, we   use  very classical tools of mathematical physics such as canonical relations and their generating functions. To V.~I.~Arnold belongs a remark about the ``depressingly noninvariant'' nature of generating functions~\cite[\S 47]{arnold:mathmethodseng}. The positive interpretation of this fact is that    generating functions possess  {a} nontrivial transformation law  under   changes of coordinates. In our constructions, generating functions of a particular type     appear as  central  geometric objects. A \emph{thick morphism} between two supermanifolds is given by a generating function $S(x,q)$, which specifies a canonical relation between the corresponding cotangent bundles and is regarded as part of {the} structure. A generating function $S(x,q)$ is a function of positions on the source manifold and momenta on the target manifold. The action on functions, $g(y)\mapsto f(x)$, is defined in terms of this generating function as
\begin{equation*}
   \boxed{\quad f(x)=g(y) + S(x,q) - y^iq_i\,, \quad\vphantom{\int_0^1} }
\end{equation*}
where to eliminate the variables $q$ and $y$ one   uses  the coupled equations $q_i=\lder{g}{y^i}$ and $y^i= \lder{S}{q_i}$,  solved by iterations. One can show that this formula generalizes  ordinary pullback (as substitution into the argument).
As the reader will see, we have to consider   generating functions as formal power expansions in the momentum variables. This explains the adjective `microformal' in the alternative name for thick morphisms and the name \textbf{microformal geometry} for the whole  theory.\footnote{\,The root `micro-' has an established usage, e.g.,   microlocal analysis (local in the cotangent or jet directions)  and    Milnor's microbundles.  It is also used  in    `symplectic microgeometry'~\cite{cattaneo-dherin-weinstein:one,cattaneo-dherin-weinstein:two,cattaneo-dherin-weinstein:three}.}

\subsection{Plan of the paper} The exposition is organized  as follows.

In  Section~\ref{sec.categories},  we introduce the  {\emph{microformal categories}} $\EThick$ and $\OThick$,   and  develop the functorial properties of thick morphisms (the construction of pullback).

In Section~\ref{sec.adjoint}, we define the   adjoint for a nonlinear morphism of vector spaces or vector bundles  as a thick morphism of the dual bundles, with properties similar to those of the ordinary adjoints. 
The construction    uses the canonical diffeomorphism $T^*E\cong T^*E^*$ of Mackenzie--Xu~\cite{mackenzie:bialg}  and its odd analog   $\Pi T^*E\cong \Pi T^*(\Pi E^*)$ introduced in~\cite{tv:graded}. Using them, we prove  in Section~\ref{sec.algebroid}  that an $L_{\infty}$-morphism of $L_{\infty}$-algebroids induces   $L_{\infty}$-morphisms of the homotopy Lie--Poisson brackets  on the dual vector bundles and  Lie--Schouten brackets   on the antidual vector bundles. We then apply this result to the theory of higher Koszul brackets and to   triangular $L_{\infty}$-bialgebroids.

In  Section~\ref{sec.quantum}, we show that, in the bosonic case, the microformal category and nonlinear pullbacks are the classical limit (for $\hbar\to 0$) of a  \emph{quantum microformal category}, which  {is}   dual to a category whose morphisms are \new{a} particular type of Fourier  integral operators  perceived as ``quantum pullbacks''. Each such   operator is specified by a ``quantum generating function''.
Quantum pullbacks act on oscillatory wave functions, which are linear combinations of oscillatory exponentials with coefficients  in formal power series in   $\hbar$. Calculating the integrals by the stationary phase method yields   formulas for  ``classical'' thick morphisms. In hindsight, one may see this as a justification of the ``classical'' formulas.  Finally, in Section~\ref{sec.quantumhomot}, we show how the applications of thick morphisms to homotopy bracket structures can be lifted to the ``quantum'' level.

Since the quantum version of our constructions relies on the stationary phase method, we included an appendix containing the necessary statements in the form adapted for our purposes.

One clarifying remark is in order, that two different types of formal power expansions   arise here.   One expansion is present already in the classical theory (generating functions themselves, pullback, composition law). It can be compared with the ``expansion in the  coupling constant'' in field theory. Another is the expansion in $\hbar$ and gives ``quantum corrections''.

We also wish to point out   a relation between this ``microformal geometry''
and  the ``symplectic microgeometry'' of  A.~Cattaneo, B.~Dherin and A.~Weinstein. In a remarkable series of papers~\cite{cattaneo-dherin-weinstein:one,cattaneo-dherin-weinstein:two,cattaneo-dherin-weinstein:three}, see also~\cite{cattaneo-dherin-weinstein:comorphisms} and~\cite{weinstein:wehrheimwoodwardcat}, they systematically developed a `micro' analog of symplectic geometry with ``symplectic microfolds''  defined as germs of symplectic manifolds at  Lagrangian submanifolds and with germs of canonical relations as  morphisms. The  microsymplectic category  so obtained was intended to cure the problem of  partially defined multiplication in Weinstein's symplectic ``category in quotes''~\cite{weinstein:bull69,weinstein:sympl71,weinstein:symplcat82,weinstein:symplectic-geometry1981,weinstein:coisotropic88}. Our formal categories $\EThick$ and $\OThick$ are   close to this microsymplectic category. The  key difference is that in our case, (formal) canonical relations between the cotangent bundles play the role of morphisms between the bases\,---\,not between the bundles themselves\,---\,and they are introduced in order  to obtain an action on smooth functions on the bases, which is our central   concept of nonlinear pullback.\footnote{\,This action on functions on  Lagrangian submanifolds in the ambient symplectic manifolds  brings to mind the   spinor representation in its various versions; it is curious to clarify  whether this is more than a superficial resemblance.}

\subsection{Terminology and notations}
For simplicity, we often use `manifolds' for `supermanifolds' and generally suppress the prefix `super-' unless we wish to emphasize that we consider   the supercase. Also, to simplify the speech, we as a rule suppress the prefix `super-' and speak about differential forms and multivector fields,  on a supermanifold, when, strictly speaking,   pseudodifferential forms and pseudomultivector fields are discussed (i.e., by definition, arbitrary smooth functions on the bundles   $\Pi TM$ and $\Pi T^*M$, respectively).
In   notation and terminology we generally follow~\cite{tv:graded,tv:higherder,tv:napl,tv:qman,tv:qman-mack}. The parity ($\Z$-grading) of an object    is denoted by a tilde over its symbol. Tensor indices carry the parities of the corresponding coordinates. $\Pi$ stands for the parity reversion functor, on vector spaces, modules or vector bundles.
For a substantial part of our constructions,
the supergeometric context is inessential. Consideration of supermanifolds is necessary for  applications  to homotopy structures.
For applications, one may need   also  \emph{graded manifolds}, which are supermanifolds that besides the $\Z$-grading or  \emph{parity}  possess   an independent  $\ZZ$-grading or  \emph{weight}    (see ~\cite{tv:graded}, also~\cite{tv:qman,tv:qman-mack,tv:napl}). Our constructions  can be extended to the graded case without difficulty.

Throughout the paper we denote local coordinates on a manifold $M$  by $x^a$ and the canonically conjugate momenta by $p_a$. The canonical symplectic form on $T^*M$ is
\begin{equation*}
    \o=dp_adx^a= d(p_adx^a)\,.
\end{equation*}
Note that the \emph{Liouville   $1$-form}  $\t =p_adx^a$  is   defined invariantly.
When we need several manifolds, we introduce different letters for local coordinates on each of them, as well as for the corresponding conjugate momenta.


\section{`Even' and `odd' microformal categories. Main properties}\label{sec.categories}

Consider supermanifolds $M_1$ and $M_2$ with local coordinates $x^a$ and $y^i$, and the corresponding conjugate momenta  $p_a$ and $q_i$ (coordinates on the cotangent spaces). Let $T^*M_2\times (-T^*M_1)$ denote the product $T^*M_2\times T^*M_1$ equipped with the symplectic form\footnote{\,We have changed notations in comparison with~\cite{tv:nonlinearpullback}, where $T^*M_1\times(-T^*M_2)$ was used. The order $T^*M_2\times (-T^*M_1)$ is more traditional in symplectic geometry. Note that it is also convenient to regard the graphs of maps $f\co X\to Y$ as subspaces of $Y\times X$, not $X\times Y$.}
\begin{equation*}
    \o=\o_2-\o_1=d(q_idy^i-p_adx^a)\,.
\end{equation*}

\begin{definition}\label{def.thick}
 A \emph{thick morphism} (or \emph{microformal morphism}) $\Phi\co M_1\tto M_2$ is defined as a formal canonical relation (which we denote by the same letter) $\Phi\subset T^*M_2\times (-T^*M_1)$\,, together with an even  function $S=S(x,q)$  defined in each local coordinate system  and depending as arguments on position variables on the source manifold and  momentum variables on the target manifold, such that
 \begin{equation}\label{eq.relphi}
    \Phi=\left\{\vphantom{\der{S}{x^a}(x,q)}\,(y^i,q_i\,;\, x^a,p_a)\ \right|\left. \ y^i=(-1)^{\itt}\,\der{S}{q_i}(x,q)\,, \ p_a=\der{S}{x^a}(x,q)\right\}\,.
 \end{equation}
We  call the function $S=S(x,q)$,  the \emph{generating function} of a thick morphism. It is considered part of \new{the} structure.
\end{definition}

We shall elaborate this definition below, but first give an example.

\begin{example} Consider a smooth map $\f\co M_1\to M_2$. In local coordinates, it is given by $y^i=\f^i(x)$. Set
\begin{equation}\label{eq.Slinear}
    S(x,q)=\f^i(x)q_i\,.
\end{equation}
This function gives a canonical relation $R_{\f}\subset T^*M_2\times (-T^*M_1)$ specified by the equations (as one immediately sees)
\begin{equation*}
    y^i=\f^i(x)\,, \ p_a=\der{\f^i}{x^a}(x)q_i\,.
\end{equation*}
The relation $R_{\f}$ is the canonical lifting of a map $\f$ to the cotangent bundles. In a coordinate-free language,
\begin{equation*}
    R_{\f}=\gr(\bar \f)\circ \left(\gr(T^*\f)\right)^{\op}\,,
\end{equation*}
where $\bar\f\co \f^*(T^*M_2)\to T^*M_2$ is the vector bundle morphism which is identity   on the fibers and   covers a map of the bases   $\f\co M_1\to M_2$,      and $T^*\f\co \f^*(T^*M_2)\to T^*M_1$ is the dual to the tangent map $T\f\co TM_1\to \f^*(TM_2)$.
\end{example}

It follows that we can identify  ordinary smooth maps $\f\co M_1\to M_2$ with a subclass of  thick morphisms $M_1\tto M_2$  specified by generating functions $S(x,q)$ of the form~\eqref{eq.Slinear}, i.e., linear in  momenta.

Consider now the general case.  Recall that  a canonical relation  or   correspondence  $\Phi$ between  symplectic manifolds $N_1$ and $N_2$ (in our case these are $T^*M_1$ and $T^*M_2$)  is a Lagrangian submanifold in the product $N_2\times (-N_1)$ taken with the form $\o=\o_2-\o_1$. Such relations are customarily  perceived as   partial multi-valued mappings $N_1\dashrightarrow N_2$ (direction of the arrow being a matter of convention)  that generalize  symplectomorphisms. However, this is not the intuition that we shall follow. For us    this relation or correspondence $\Phi$ is an analog of a map between the manifolds $M_1$ and $M_2$ themselves (and not between  their cotangent bundles). For our purposes we consider not arbitrary canonical relations, but only of a particular kind, those that are specified by generating functions of the type $S(x,q)$. 
(In particular, unlike for relations in general, the direction    from $M_1$ to $M_2$  in our constructions is unambiguous and not a matter of convention.)

To understand the role of the generating function $S$ in Definition~\ref{def.thick}, recall that for an arbitrary Lagrangian submanifold $\Phi\subset T^*M_2\times (-T^*M_1)$ the $1$-form $q_idy^i-p_adx^a$ is closed, hence locally exact, i.e.,  there is  a function $F$  on $\Phi$ defined independently of a choice of coordinates (but possibly  only locally and up to a constant), such that
\begin{equation}\label{eq.dF}
    q_idy^i-p_adx^a =dF\,.
\end{equation}
In Definition~\ref{def.thick} it is  assumed that the variables $x^a$ and $q_i$
yield a system of local coordinates on the submanifold $\Phi$.  \new{(This follows the case of an ordinary map.)}  The equations specifying $\Phi$  mean that
$p_adx^a+(-1)^{\itt}y^idq_i=dS$,
which is equivalent to
\begin{equation}\label{eq.dS}
    q_idy^i-p_adx^a =d(y^iq_i-S)\,.
\end{equation}
The l.h.s. of~\eqref{eq.dS} is invariant, but the explicit appearance of the variables $y^i$ and $q_i$ in the r.h.s.    makes the 
function $S(x,q)$ a coordinate-dependent object (unlike $F$ in~\eqref{eq.dF}). We shall give below the precise transformation law for $S$. The functions $S$   and $F$ are related by a    Legendre transform type formula,
\begin{equation}\label{eq.sandf}
    F=y^iq_i-S\,.
\end{equation}
(It is an actual  Legendre transform  if $F$ can be regarded as a function of independent variables $x^a,y^i$, which may   not necessarily hold in general.)
The relation $\Phi$     defines  only the differentials $dF$ or $dS$. We  assume that constants of integration are chosen, so that we can speak unambiguously about  the functions $F$ or $S$, and  that   $F$  can be defined globally.

What is a  coordinate-free characterization of the considered type of Lagrangian submanifolds? The condition that $x^a$ be independent on $\Phi$  is equivalent to   the submanifold $\Phi$ project on $M_1$ without degeneration (with full rank). In contrast with that, the second  condition that $q_i$ be independent on $\Phi$ is equivalent to  $\Phi$ ``project   without degeneration on the fibers of $T^*M_2$'', but this seems not have a well-defined meaning without a choice of a local trivialization. Consider, however, the 
differentials $dq_i$. We have $q_i=\lder{y^{i'}}{y^i}\,q_{i'}$, so  obtain
\begin{equation*}
    dq_i=d\!\left(\der{y^{i'}}{y^i}\right)q_{i'} +(1)^{\itt+\itt'}\,\der{y^{i'}}{y^i}\;dq_{i'}\,.
\end{equation*}
We see that when $q_{i'}$ are small (i.e., we are near the zero section of $T^*M_2$), the linear independence of  $dq_i$ on $\Phi$  implies the linear independence of  $dq_{i'}$, and vice versa. Therefore we   conclude that the condition that the variables $q_i$ be independent on $\Phi$ (or   ``$\Phi$ project    without degeneration on the fibers of $T^*M_2$'') has invariant meaning on a small neighborhood of the zero section of $T^*M_2$.  In particular, it makes sense on the  formal neighborhood of $M_2$ in $T^*M_2$. Therefore  we define  $\Phi$ as   a \emph{formal} canonical relation, i.e., a Lagrangian submanifold of the formal neighborhood\footnote{\,Replacing a formal submanifold by a germ  would give  a `symplectic micromorphism' between   `symplectic microfolds' represented by the pairs $(T^*M_1,M_1)$ and   $(T^*M_2,M_2)$, a notion introduced by Cattaneo--Dherin--Weinstein.    Note that our  thick morphisms   are morphisms between $M_1$ and $M_2$, while    symplectic micromorphisms  are  morphisms   between objects of double  dimensions.}  of $M_2\times T^*M_1$ in $T^*M_2\times (-T^*M_1)$.

Hence we consider  the generating function $S(x,q)$ of a thick morphism $\Phi\co M_1\tto M_2$ as a formal power series
\begin{equation}\label{eq.sexpand}
    S(x,q)=S^0(x)+ S^i(x)q_i + \frac{1}{2}\,S^{ij}(x)q_jq_i + \frac{1}{3!}\,S^{ijk}(x)q_kq_jq_i +\ldots
\end{equation}
in the momentum variables $q_i$\,.
In the sequel we   frequently suppress the adjective `formal' for various objects that we consider (functions, submanifolds, etc.). As we shall see, it makes sense to group the terms in this expansion as
\begin{equation}\label{eq.sexpandgroup}
    S(x,q)=S^0(x)+ S^i(x)q_i + S^{+}(x,q)\,,
\end{equation}
where $S^{+}(x,q)$ contains all terms of order $2$ and higher in $q_i$.

To conclude elaborating our definition, we state the following transformation law for their generating functions $S$. For logical simplicity we may regard  it as    part of  the   definition, but  it can be deduced from  equations~\eqref{eq.dF},\eqref{eq.dS},\eqref{eq.sandf} together with the invariance condition for a submanifold $\Phi$.

\begin{transflaw}[for generating functions] A  \emph{generating function}  $S$ of a thick morphism $\Phi\co M_1\tto M_2$  as a geometric object on $M_1\times M_2$ transforms by
\begin{equation}\label{eq.news}
    S'(x',q')=S(x,q) - y^iq_i +y^{i'}q_{i'}\,,
\end{equation}
under an invertible change of local coordinates $x^a=x^a(x')$, $y^{i}=y^{i}(y')$. Here $S(x,q)$ is the expression for $S$ in `old' coordinates and $S'(x',q')$ is the expression for $S$ in `new' coordinates. The variables $x^a$ and $y^{i'}$ in the r.h.s. of~\eqref{eq.news} are given simply by the  substitutions: $x^a=x^a(x')$ and  $y^{i'}=y^{i'}(y)$ (where as usual $y^{i'}=y^{i'}(y)$ is the inverse change of coordinates), while
 $q_i$   and    $y^i$   are determined from the coupled
equations
\begin{equation} \label{eq.newsadd}
   q_i=\der{y^{i'}}{y^i}(y)\,q_{i'}\,, \quad y^i=(-1)^{\itt}\der{S}{q_i}(x,q)\,.
\end{equation}
\end{transflaw}

\begin{proposition} The transformation law~\eqref{eq.news} satisfies the cocycle condition (hence, in particular, the set of generating functions $S$ is non-empty). A generating function $S$ with local representations $S(x,q)$ and the transformation law~\eqref{eq.news} specifies a well-defined formal canonical relation $\Phi\subset T^*M_2\times (-T^*M_1)$.
\end{proposition}
\begin{proof} The cocycle condition  immediately follows because the transformation law~\eqref{eq.news} has  a ``coboundary''  form. Equation~\eqref{eq.news} also means that functions $y^iq_i -S(x,q)$ glue into one global function.
To check the second statement, we need to show that if \eqref{eq.dS} holds for $S$, $x^a$, $p_a$, $y^i$, $q_i$, and $S'$ is related with $S$ by the given transformation law, then the same   relation
\begin{equation}\label{eq.dSdash}
    q_{i'}dy^{i'}-p_{a'}dx^{a'} =d(y^{i'}q_{i'}-S')\,,
\end{equation}
holds for the `new' variables $S'$, $x^{a'}$, $p_{a'}$, $y^{i'}$, $q_{i'}$ (assuming the standard transformation laws for the positions and momenta)\,. But the l.h.s. of~\eqref{eq.dSdash} equals $q_idy^i-p_adx^a$ by the invariance of the Liouville forms and $y^{i'}q_{i'}-S'$ in the r.h.s. equals $y^iq_i-S$ by~\eqref{eq.news}. Hence, \eqref{eq.dS} and~\eqref{eq.dSdash} are equivalent.
\end{proof}

\begin{example} \label{ex.s0phi}
Consider a generating function $S$ that in one coordinate system has the form
\begin{equation}\label{eq.s0andphi}
    S(x,q)=S^0(x)+\f^i(x)q_i
\end{equation}
(we write $\f^i$ instead of $S^i$ for convenience, as will become clear shortly). Explore the action of the transformation law on $S$. We have
\begin{equation*}
    S'(x',q')=S(x,q)-y^iq_i+y^{i'}q_{i'}= S^0(x)+\f^i(x)q_i -y^iq_i+y^{i'}q_{i'}\,,
\end{equation*}
where we have to substitute $x=x(x')$, $y'=y'(y)$, and for $y$ and $q'$ we need to solve the equations~\eqref{eq.newsadd}. But in our case,
they decouple and for $y$ simply give
\begin{equation*}
    y^i=(-1)^{\itt}\der{S}{q_i}(x,q)=\f^i(x)\,.
\end{equation*}
Hence the terms $\f^i(x)q_i -y^iq_i$ in $S'$ cancel and we obtain (taking into account the substitutions $y'=y'(y)$, $y=\f(x)$, and $x=x(x')$)
\begin{equation*}
    S'(x',q')= S^0(x)+y^{i'}q_{i'}=S^0(x(x'))+y^{i'}(\f(x(x')))q_{i'}\,.
\end{equation*}
In other words, in new coordinates $S$ has the same form
\begin{equation*}
    S'(x',q')= S^{'0}(x')+\f^{i'}q_{i'}\,,
\end{equation*}
where
\begin{equation*}
    S^{'0}(x')= S^0(x(x'))
\end{equation*}
and
\begin{equation*}
    \f^{i'}= y^{i'}(\f(x(x')))\,.
\end{equation*}
These are precisely the transformation laws for coordinate representations of a scalar function on $M_1$ and   a map $\f\co M_1\to M_2$\,.
\end{example}

We conclude that thick morphisms $\Phi\co M_1\tto M_2$ with generating functions $S$ of the form~\eqref{eq.s0andphi}, of degree $\leq 1$ in momenta, invariantly correspond to pairs $(\f,S^0)$ where $\f\co M_1\to M_2$ is a smooth map and $S^0\in \fun(M_1)$ is an (even) smooth function on the source manifold. (We shall see later that such pairs are morphisms in a semidirect product category.)

\begin{example} \label{ex.s0phisplus}
Consider now the general case where the generating function of a thick morphism $\Phi\co M_1\tto M_2$ has the form~\eqref{eq.sexpandgroup}. We rewrite it as
\begin{equation}\label{eq.sexpandgroup2}
    S(x,q)=S^0(x)+\f^i(x)q_i +S^{+}(x,q)\,,
\end{equation}
having in mind the previous example. Let us analyze how the particular terms in~\eqref{eq.sexpandgroup2} transform. The transformation law gives
\begin{equation*}
    S'(x',q')=S(x,q)-y^iq_i+y^{i'}q_{i'}= S^0(x)+\f^i(x)q_i +S^{+}(x,q)  -y^iq_i+y^{i'}q_{i'}\,,
\end{equation*}
where as before we have to substitute $x=x(x')$, $y'=y'(y)$, and   $y$ and $q$ are obtained by  solving  equations~\eqref{eq.newsadd}. But now the equation for determining $y$ takes the form
\begin{equation*}
    y^i=\f^i(x)+ (-1)^{\itt}\der{S^{+}}{q_i}\left(x,\der{y'}{y}(y)q'\right)\,;
\end{equation*}
note that the second term is of order $\geq 1$ in $q'$. This gives a unique solution as a power series in $q'$, of the form
\begin{equation*}
    y^i=\f^i(x)+ y^{+i}(x,q')
\end{equation*}
(the second term of order $\geq 1$ in $q'$). From here
\begin{equation*}
    q_{i}=\der{y^{i'}}{y^i}\left(\f(x)+y^{+}(x,q')\right)q_{i'}\,,
\end{equation*}
and for $S'$ we arrive at
\begin{multline*}
    S'(x',q')= S^0(x)+\left(\f^i(x)-y^i\right)q_i +S^{+}(x,q)  +y^{i'}q_{i'}
    =\\
    S^0(x)- y^{+i}(x,q')\,q_i +S^{+}(x,q) + y^{i'}\bigl(\f(x)+y^{+}(x,q')\bigr)q_{i'}=\\
    S^0(x)+ y^{i'}\bigl(\f(x)+y^{+}(x,q')\bigr)q_{i'}
    - y^{+i}(x,q')\,\der{y^{i'}}{y^i}\left(\f(x)+y^{+}(x,q')\right)q_{i'} \\
    +S^{+}\Bigl(x,\der{y'}{y}\left(\f(x)+y^{+}(x,q')\right)q'\Bigr)
    \,,
\end{multline*}
where we need to  substitute finally $x=x(x')$. In particular we obtain
\begin{equation*}
    S'(x',q')\equiv S^0(x(x'))+ y^{i'}\bigl(\f\bigl(x(x')\bigr)\bigr)q_{i'} \mod \langle q'\rangle^2\,.
\end{equation*}
Hence
\begin{equation*}
    S'(x',q')=S^{'0}(x')+\f^{i'}(x')q_{i'} +S^{'+}(x',q')\,,
\end{equation*}
 where
\begin{equation*}
    S^{'0}(x')= S^0(x(x'))
\end{equation*}
and
\begin{equation*}
    \f^{i'}= y^{i'}(\f(x(x')))\,.
\end{equation*}
That means that the first two terms in the expansion~\eqref{eq.sexpandgroup} or \eqref{eq.sexpandgroup2} represent, respectively, a scalar function on $M_1$ and   a map $\f\co M_1\to M_2$. At the same time, the transformation law for  the term
$S^{+}$ includes higher derivatives of  changes  of coordinates on $M_2$ calculated at the points $\f(x)$\,.
\end{example}

From Examples~\ref{ex.s0phi} and \ref{ex.s0phisplus},
we see that pairs $(\f,S^0)$ correspond to thick morphisms $M_1\tto M_2$ of a special type and, conversely, an arbitrary thick morphism  $\Phi\co M_1\tto M_2$ canonically defines such a pair. So we have an ``inclusion--retraction'' setting. We shall come back to that.

Out next task is to define action of thick morphisms on functions.

Consider the algebras of smooth functions $\fun(M)$. For each supermanifold $M$, the algebra   $\fun(M)$ is a commutative $\Z$-graded  algebra. We shall regard smooth functions of particular parity on $M$ as points of an  infinite-dimensional supermanifold. (The word ``smooth'' will be often omitted in the sequel.) We have the    \emph{supermanifold of all even functions} on $M$, which we denote  $\funn(M)$, and the   \emph{supermanifold of all odd functions} on $M$, which we denote $\pfunn(M)$. We use boldface to distinguish vector supermanifolds from the corresponding to them $\Z$-graded linear spaces. (A physicist would say that the points of $\funn(M)$ are   `bosonic fields',  and the points of $\pfunn(M)$ are  `fermionic fields' on $M$.)

\begin{definition}[\cite{tv:nonlinearpullback}]
Let $\Phi\co M_1\tto M_2$ be a thick morphism with a generating function $S$. The \emph{pullback} $\Phi^*$ is  a formal mapping of functional supermanifolds of even functions, $g\mapsto \Phi^*[g]$,
 \begin{equation}\label{eq.phistar1}
    \Phi^*\co \funn(M_2) \to \funn(M_1)\,,
 \end{equation}
defined by
\begin{equation}\label{eq.phistar2}
    \Phi^*[g] (x)= g(y) + S(x,q) - y^iq_i\,,
\end{equation}
where $q_i$ and $y^i$ are determined from the equations
\begin{equation}\label{eq.phistarq}
    q_i=\der{g}{y^i}\,(y)
\end{equation}
and
\begin{equation}\label{eq.phistary}
    y^i=(-1)^{\itt}\,\der{S}{q_i}(x,q)\,.
\end{equation}
Here $g\in \funn(M_2)$ is an even function on $M_2$ and $\Phi^*[g]$ is its image in $\funn(M_1)$.
\end{definition}

\begin{remark} We showed in~\cite{tv:nonlinearpullback} that the pullback $\Phi^*$ does not depend on a choice of coordinates. This is guaranteed by the transformation law of the generating function $S$.
\end{remark}

\begin{example} Consider a thick morphism $\Phi\co M_1\tto M_2$ defined by a pair $(\f,S^0)$. We have
\begin{equation}
    S(x,q)=S^0(x)+\f^i(x)q_i\,.
\end{equation}
From~\eqref{eq.phistary}, we obtain $y^i=\f^i(x)$, so
\begin{equation*}
    \Phi^*[g](x)= g(y) + S(x,q) - y^iq_i= g(y) + S^0(x)+\f^i(x)q_i - y^iq_i = g(\f(x)) +S^0(x)\,.
\end{equation*}
Hence $\Phi^*$ in this case is an affine transformation,
\begin{equation}
    \Phi^*[g]=S^0+ \f^*(g)\,,
\end{equation}
the combination of the ordinary pullback by a map $\f\co M_1\to M_2$ and the shift by a function $S^0\in \funn(M_1)$. (In particular, formula~\eqref{eq.phistar2} gives the usual pullback when a thick morphism is an  ordinary smooth map.)
\end{example}

Let us see how the construction of $\Phi^*$ works in general.

Substituting~\eqref{eq.phistarq} into~\eqref{eq.phistary}  gives the equation for $y^i$,
\begin{equation}\label{eq.y}
    y^i=(-1)^{\itt}\,\der{S}{q_i}\Bigl(x,\der{g}{y}\,(y)\!\Bigr)\,,
\end{equation}
which can be solved by iterations.
If we use~\eqref{eq.sexpandgroup2}, the equation takes the form
\begin{equation}\label{eq.yphi}
    y^i=\f^i(x)+ (-1)^{\itt}\,\der{S^{+}}{q_i}\Bigl(x,\der{g}{y}\,(y)\!\Bigr)\,,
\end{equation}
where the second term is of order $\geq 1$ in $\lder{g}{y}$. There is a unique solution for $y$ as a ``functional'' power series in $g$. More precisely, a formal power series in the first and higher derivatives of $g$ evaluated at   $y=\f(x)$ and starting from $y=\f(x)$ as the zero-order term. This gives a  ``perturbed''  map $\f_g\co M_1\to M_2$ depending on $g\in\funn(M_2)$\,, as a series
\begin{equation}\label{eq.phig}
    \f_g=\f+\f_{(1)}+\f_{(2)}+\ldots \,,
\end{equation}
where $\f\co M_1\to M_2$ is    defined  by the thick morphism $\Phi$ and does not depend on $g$, while
the next terms $\f_{(k)}$ give ``higher corrections'' to $\f$ (linear, quadratic, etc., in the function $g$).
Using   $\f_g$, the pullback $\Phi^*[g]$ can be expressed as
\begin{equation}\label{eq.phistarexpl}
    \Phi^*[g](x)= g\bigl(\f_g(x)\bigr) + S\Bigl(x,\der{g}{y}\bigl(\f_g(x)\bigr)\Bigr) - \f_g^i(x)\,\der{g}{y^i}\bigl(\f_g(x)\bigr)\,,
\end{equation}
which   demonstrates the nonlinear dependence on $g$. In terms of \eqref{eq.sexpandgroup2}, we obtain after simplification:
\begin{multline} \label{eq.phig2}
    \Phi^*[g](x)= S^0(x)+ g\bigl(\f(x)+\f_{(1)}(x)+\ldots \bigr)-
    \\
     \bigl(\f_{(1)}^i(x) +\ldots \bigr)\,\der{g}{y^i}\bigl(\f(x)+\f_{(1)}(x)+\ldots \bigr) 
    +
    S^{+}\Bigl(x,\der{g}{y}\bigl(\f(x)+\f_{(1)}(x)+\ldots \bigr)\Bigr)\,.
\end{multline}

\begin{example} Calculate $\Phi^*[g]$ to the second order in $g$. From~\eqref{eq.phig2}, we immediately see that the terms of order $\leq 1$ are precisely
\begin{equation*}
    S^0(x)+ g\bigl(\f(x)\bigr)\,.
\end{equation*}
For the quadratic correction, there are inputs from the three last summands in~\eqref{eq.phig2}, but two of them cancel\,:
\begin{equation*}
    \f_{(1)}^i(x)\der{g}{y^i}\bigl(\f(x)\bigr) - \f_{(1)}^i(x)\der{g}{y^i}\bigl(\f(x)\bigr) +
    S^{+}_{(2)}\Bigl(x,\der{g}{y}\bigl(\f(x)\bigr)\Bigr)= S^{+}_{(2)}\Bigl(x,\der{g}{y}\bigl(\f(x)\bigr)\Bigr)\,.
\end{equation*}
Here $S^{+}_{(2)}(x,q)=\frac{1}{2}\,S^{ij}(x)q_jq_i$ is the quadratic term in the expansion of $S$. Altogether,
\begin{equation}
    \Phi^*[g](x)= S^0(x)+ g\bigl(\f(x)\bigr) + \frac{1}{2}\,S^{ij}(x)\,\p_i g\bigl(\f(x)\bigr)\p_j g\bigl(\f(x)\bigr)
    + \ldots \,.
\end{equation}
\end{example}

This is the general pattern: the pullback $\Phi^*\co \funn(M_2) \to \funn(M_1)$ with respect to a thick morphism is a formal nonlinear differential operator, so that  the   terms of order $k$ in $g$ of the expansion of $\Phi^*[g]$ are   homogeneous polynomials of degree $k$ in the derivatives of $g$ of orders $\leq k-1$ evaluated at $y=\f(x)$\,, with the zeroth and   first order terms being the combination of the shift   and  ordinary pullback: $S^0+\f^*(g)$\,. We see again the different roles of the three summands  in the expansion~\eqref{eq.sexpandgroup}, \eqref{eq.sexpandgroup2}.

\begin{remark} Pullbacks with respect to thick morphisms can be applied to functions  defined   on an open domain $U\subset M_2$. The image  will be in $\funn(\f^{-1}(U))$, where $\f\co M_1\to M_2$ is the underlying  ordinary map.  
\end{remark}

\begin{example} \label{ex.testfunction}
If we apply $\Phi^*$ to the function $g=y^ic_i$, where $y^i$ are local coordinates on $M_2$ and $c_i$ are some auxiliary variables, then we obtain $q_i=c_i$ from ~\eqref{eq.phistarq}  and
\begin{equation*}
    \Phi^*[y^ic_i]= y^ic_i- y^iq_i+S(x,q)=S(x,c)\,.
\end{equation*}
In this way we recover the generating function $S=S(x,q)$.
\end{example}

A thick morphism $\Phi$ is therefore determined by  the   action of  $\F^*$ on  linear combinations of   coordinate functions.
Hence, although the pullback $\Phi^*$ is a nonlinear mapping, 
it still respects some algebraic properties  such as the role of local coordinates as   `free generators'.\footnote{The algebra of smooth functions on a coordinate (super)domain   is not of course a free algebra in the standard algebraic sense with respect to arbitrary homomorphisms (which would be the polynomial algebra), but it behaves as a free algebra with respect to the homomorphisms induced by smooth maps, which are defined by the images of the coordinate functions not subject to any restrictions.}

In~\cite{tv:nonlinearpullback}, we proved the following statement that points at another aspect of algebraic properties of pullbacks $\Phi^*$.

\begin{theorem}[\cite{tv:nonlinearpullback}] \label{thm.tphi}
For every function $g\in\funn(M_2)$, the tangent map
\begin{equation*}
    T\Phi^*[g]\co \fun(M_2)\to \fun(M_1)
\end{equation*}
for the pullback $\Phi^*\co \funn(M_2)\to \funn(M_1)$ by a thick morphism $\Phi\co M_1\tto M_2$ is the  ordinary  pullback $\f_g^*$ by the map
\begin{equation*}
    \f_g\co M_1\to M_2
\end{equation*}
that corresponds to the function $g$\,. \qed
\end{theorem}

(Note that    tangent spaces to $\funn(M)$  can be identified with $\fun(M)$\,.)

We see that though the pullback  with respect to a thick morphism  as a mapping  between the vector supermanifolds corresponding to the algebras of smooth functions  is in general a nonlinear (and indeed formal) mapping,  and as such cannot be  an algebra homomorphism  in the usual sense,
it possesses the remarkable property that its derivative  (= tangent map or linearization) at each point is an algebra homomorphism. It is tempting to give the following definition.

\begin{definition} \label{def.nonlinhom}
Let $A$, $B$ be (super)algebras and   $\Abold$, $\Bbold$ denote the corresponding vector supermanifolds. A  map (a formal map) $\a\co \Abold \to \Bbold$ is a  \emph{nonlinear algebra homomorphism} (resp.,  a  \emph{formal nonlinear algebra homomorphism}) if its derivative $T\a(\abold)\co A\to B$ is an algebra homomorphism for every $\abold\in \Abold$.
\end{definition}

(The distinction between  $A$ and $\Abold$, $B$ and $\Bbold$, is important only in the super case.)

Pullbacks with respect to thick morphisms are formal nonlinear algebra homomorphisms. (In the abstract case, it is unclear whether   formal or non-formal versions of the notion is more important.) Following the known statement for ordinary algebra homomorphisms, we are tempted to suggest a conjecture:
\begin{conjecture} \label{conj.nonlinhom}
For smooth (super)manifolds $M_1$ and $M_2$ \emph{(with the usual assumptions leading to paracompactness)}, every formal nonlinear   algebra homomorphism
\begin{equation*}
    \a\co \funn(M_2)\to \funn(M_1)
\end{equation*}
is the pullback, $\a=\Phi^*$,   with respect to some  thick morphism
\begin{equation*}
    \Phi\co M_1\tto M_2\,.
\end{equation*}
\end{conjecture}
(So far we do not know if this is true or not.)

Now we wish to establish   categorical properties of thick morphisms.

Consider thick morphisms $\Phi_{21}\co M_1\tto M_2$ and $\Phi_{31}\co M_2\tto M_3$ with   generating functions $S_{21}=S_{21}(x,q)$ and $S_{32}=S_{32}(y,r)$, respectively. Here $z^{\mu}$ are local coordinates on $M_3$ and by $r_{\mu}$ we denoted the corresponding conjugate momenta.

\begin{theorem} \label{thm.classcompos}
The composition $\Phi_{32}\circ \Phi_{21}$  is well-defined as a thick morphism
\begin{equation*}
   \Phi_{31}\co M_1\tto M_3
\end{equation*}
with the  generating function $S_{31}=S_{31}(x,r)$, where
\begin{equation}\label{eq.S31}
    S_{31}(x,r)= S_{32}(y,r) + S_{21}(x,q) - y^iq_i
\end{equation}
and $y^i$ and $q_i$ are  expressed through $(x^a, r_{\mu})$  from the system
\begin{align}\label{eq.q31}
    q_i&= \der{S_{32}}{y^i}\,(y,r)
\intertext{and}
 \label{eq.y31}
    y^i&=(-1)^{\itt}\,\der{S_{21}}{q_i}\,(x,q)\,,
\end{align}
which has a unique solution   as a  power series in $r_{\mu}$ and   a functional power series in $S_{32}$.
\end{theorem}
\begin{proof} To find the composition of $\Phi_{32}$ and $\Phi_{21}$ as relations,  $\Phi_{32}\subset T^*M_3\times  T^*M_2$ and $\Phi_{21}\subset T^*M_2\times  T^*M_1$, we need to consider all pairs $(z,r\,;\,x,p)\in T^*M_3\times  T^*M_1$ for which there exist $(y,q)\in T^*M_2$ such that $(z,r\,;\,y,q)\in \Phi_{32}\subset T^*M_3\times  T^*M_2$ and $(y,q\,;\, x,p)\in \Phi_{21}\subset T^*M_2\times  T^*M_1$.  By the definition of $\Phi_{21}$, we should have
\begin{equation*}
    y^i=(-1)^{\itt}\,\der{S_{21}}{q_i}\,(x,q)\,,
\end{equation*}
$x^a$, $q_i$ are free variables,
and by the definition of $\Phi_{32}$, we should have
\begin{equation*}
    q_i= \der{S_{32}}{y^i}\,(y,r)\,,
\end{equation*}
now $y^i$, $r_{\mu}$ are free variables. Therefore we arrive at the system~\eqref{eq.q31}, \eqref{eq.y31} where $y^i$ and $q_i$ are  to be determined and  the variables $x^a, r_{\mu}$ are free. Substituting~\eqref{eq.q31} into~\eqref{eq.y31}, we obtain for determining $y$ the equation
\begin{equation*}
    y^i=(-1)^{\itt}\,\der{S_{21}}{q_i}\,\Bigl(x,\der{S_{32}}{y}\,(y,r)\Bigr)\,,
\end{equation*}
which has a unique solution $y^i=y^i(x,r)$ by iterations, similarly to the construction of pullback. Here the `parameter of smallness' is $S_{32}$, more precisely its derivative in $y^i$ in the lowest order in $r_{\mu}$.  The solution for $y^i$ can be substituted back to~\eqref{eq.q31} to obtain an expression $q_i=q_i(x,r)$. It remains to show that this composition of relations is indeed specified by  the generating function given by~\eqref{eq.S31}. We have
\begin{equation*}
    q_idy^i-p_adx^a =d(y^iq_i-S_{21})
\end{equation*}
and
\begin{equation*}
    r_{\mu}dz^{\mu}-q_idy^i =d(z^{\mu}r_{\mu} -S_{32})\,.
\end{equation*}
We obtain
\begin{equation*}
    r_{\mu}dz^{\mu}-p_adx^a =d(z^{\mu}r_{\mu}-S_{32}+y^iq_i-S_{21})\,.
\end{equation*}
Therefore, $S_{31}=S_{32}-y^iq_i+S_{21}$,  as claimed.
\end{proof}

\begin{theorem} The composition of thick morphisms is associative.
\end{theorem}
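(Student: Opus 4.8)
The plan is to bring both ways of bracketing a triple composition into one and the same \emph{symmetric normal form} and then observe that this normal form depends only on the three given thick morphisms. Take thick morphisms $\Phi_{21}\co M_1\tto M_2$, $\Phi_{32}\co M_2\tto M_3$, $\Phi_{43}\co M_3\tto M_4$ with generating functions $S_{21}(x,q)$, $S_{32}(y,r)$, $S_{43}(z,t)$, where $x^a,y^i,z^{\mu},w^{\alpha}$ are local coordinates on $M_1,\dots,M_4$ and $p_a,q_i,r_{\mu},t_{\alpha}$ the conjugate momenta. I claim that both $(\Phi_{43}\circ\Phi_{32})\circ\Phi_{21}$ and $\Phi_{43}\circ(\Phi_{32}\circ\Phi_{21})$ are the thick morphism $M_1\tto M_4$ with generating function
\begin{equation*}
    S_{41}(x,t)= S_{43}(z,t)+S_{32}(y,r)+S_{21}(x,q)-z^{\mu}r_{\mu}-y^iq_i\,,
\end{equation*}
where $y^i,q_i,z^{\mu},r_{\mu}$ are the unique solution, as a power series in $t_{\alpha}$ and a functional power series in $S_{32},S_{43}$, of the system
\begin{equation*}
    q_i=\der{S_{32}}{y^i}(y,r)\,,\qquad y^i=(-1)^{\itt}\der{S_{21}}{q_i}(x,q)\,,\qquad r_{\mu}=\der{S_{43}}{z^{\mu}}(z,t)\,,\qquad z^{\mu}=(-1)^{\mut}\der{S_{32}}{r_{\mu}}(y,r)\,.
\end{equation*}
Solvability of this system by iteration is established exactly as in the composition theorem: near the zero sections of the cotangent bundles it is a formal perturbation of the (invertible) equations describing the composite smooth map $\f_{43}\circ\f_{32}\circ\f_{21}\co M_1\to M_4$.

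First I would compute $(\Phi_{43}\circ\Phi_{32})\circ\Phi_{21}$. By the composition theorem, $\Psi:=\Phi_{43}\circ\Phi_{32}\co M_2\tto M_4$ has generating function $S_{42}(y,t)=S_{43}(z,t)+S_{32}(y,r)-z^{\mu}r_{\mu}$, with $r_{\mu}=\partial S_{43}/\partial z^{\mu}$ and $z^{\mu}=(-1)^{\mut}\partial S_{32}/\partial r_{\mu}$ and $z,r$ expressed through $(y,t)$. The crucial point is an \emph{envelope} (Hamilton--Jacobi type) cancellation: differentiating $S_{42}$ in $y^i$, all contributions coming from the implicit dependence of $z$ and $r$ on $(y,t)$ cancel against the differential of $-z^{\mu}r_{\mu}$ precisely by virtue of the two equations just written, so that $\partial S_{42}/\partial y^i=\partial S_{32}/\partial y^i(y,r)$. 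Applying the composition theorem a second time to $\Psi\circ\Phi_{21}$ then reproduces the displayed formula for $S_{41}$ together with the four equations above, verbatim. The computation of $\Phi_{43}\circ(\Phi_{32}\circ\Phi_{21})$ is symmetric: $\Xi:=\Phi_{32}\circ\Phi_{21}\co M_1\tto M_3$ has some generating function $S_{31}(x,r)$, the analogous cancellation gives $\partial S_{31}/\partial r_{\mu}=\partial S_{32}/\partial r_{\mu}(y,r)$, and the composition theorem applied to $\Phi_{43}\circ\Xi$ again delivers the same $S_{41}$ and the same system. Since a thick morphism is determined by its generating function, the two composites coincide.

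An equivalent, more geometric way to organize the argument is to note that composition of (formal) canonical relations is associative as a set-theoretic operation on spans, so the two composite relations coincide as Lagrangian submanifolds in the formal neighbourhood of $M_4\times M_1$ in $T^*M_4\times(-T^*M_1)$; the composition theorem then shows that this common relation is of the type of Definition~\ref{def.thick}, and the normal-form computation above is what fixes the integration constant so that the generating functions, and not merely their differentials, agree.

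The main obstacle I anticipate is purely the sign bookkeeping of the envelope cancellations in the super setting: one must check that the Koszul signs arising when differentiating $S_{43}(z,t)+S_{32}(y,r)-z^{\mu}r_{\mu}$ through the intermediate variables (and likewise for the $\Xi$-analogue) really do cancel in pairs. The underlying mechanism---first-order stationarity of ``$S_{\mathrm{outer}}+S_{\mathrm{inner}}-\langle\text{intermediate position},\text{intermediate momentum}\rangle$'' in the intermediate variables---is clean and manifestly order-independent, which is what makes associativity plausible a priori; a secondary, routine point is to confirm, in the formal-power-series category, that the iteration reached from either bracketing is literally the unique solution of the symmetric system, which the two computations above exhibit directly.
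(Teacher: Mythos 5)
Your proposal is correct and is essentially the paper's own argument: both bracketings are reduced to the same symmetric expression $S_{43}+S_{32}+S_{21}-y^iq_i-z^{\mu}r_{\mu}$, with the intermediate variables eliminated through the composition equations. The paper's proof is simply a terser version of this computation; your envelope-cancellation step and the remark on associativity of composing the underlying canonical relations just make explicit details the paper leaves implicit.
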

\begin{proof}
 Consider the diagram
 \begin{equation*}
  M_1\ttto{\Phi_{21}} M_2\ttto{\Phi_{32}} M_3 \ttto{\Phi_{43}} M_4\,.
 \end{equation*}
Let $\Phi_{42}=\Phi_{43}\circ \Phi_{32}$ and $\Phi_{31}=\Phi_{32}\circ \Phi_{21}$. We need to check that $\Phi_{43}\circ \Phi_{31}= \Phi_{42}\circ \Phi_{21}$\,. Consider the generating functions. 
For the \LHS, we obtain $S_{43}+S_{31}-z^{\mu}r_{\mu}=S_{43}+S_{32}+S_{21}-y^iq_i-z^{\mu}r_{\mu}$.  For the \RHS, we obtain $S_{42}+S_{21}-y^iq_i=S_{43}+S_{32}-z^{\mu}r_{\mu}+S_{21}-y^iq_i$, and the associativity follows.
\end{proof}

\begin{remark}
Since there is the identity thick morphism for each supermanifold $M$, given by the generating function $S=x^aq_a$, we conclude  that thick morphisms form a \emph{formal category}, which we denote $\EThick$ (with the same set of objects   as  the usual category of supermanifolds). `Formality' of the category means that the composition law is given by a    power series. Formality enters   our constructions in two related but different ways: as \emph{micro}formality, i.e.,     power expansions in the cotangent directions, and as formal ``funactional''  expansions in the formulas for pullback  and for the  generating function  of composition.
\end{remark}

\begin{example} Let us compute the composition of thick morphisms in the lowest order. Suppose $\Phi_{21}$ and $\Phi_{32}$ are given by generating functions
\begin{align}
 S_{21}(x,q)&=f_{21}(x) +\f_{21}^i(x)q_i+\ldots\,, \\
 S_{32}(y,r)&=f_{32}(y) +\f_{32}^{\mu}(y)r_{\mu}+\ldots\,.
\end{align}
We need to determine the generating function for the composition $\Phi_{32}\circ \Phi_{21}$,
\begin{equation}
 S_{31}(x,r)=f_{31}(x)+\f_{31}^{\mu}(x)r_{\mu}+\ldots \
\end{equation}
(Here   dots stand for the terms of higher order in momenta.) We have, in the lowest order,
\begin{multline*}
 S_{31}(x,r)=S_{32}(y,r)+ S_{21}(x,q)-y^iq_i= f_{32}(y) +\f_{32}^{\mu}(y)r_{\mu}+ f_{21}(x) +\f_{21}^i(x)q_i -y^iq_i +\ldots= \\
 f_{32}(y) +\f_{32}^{\mu}(y)r_{\mu}+ f_{21}(x) +\ldots = f_{32}(\f_{21}(x)) +\f_{32}^{\mu}(\f_{21}(x))r_{\mu}+ f_{21}(x) +\ldots\,.
\end{multline*}
Here we are calculating modulo   $J^2$ where the ideal $J$ is generated by the momenta and the zero-order terms such as $f_{21}$. Note that $y^i$ have to be determined only modulo $J$, so from~\eqref{eq.y31}, $y^i=\f_{21}^i(x) \mod J$, and the terms $\f_{21}^i(x)q_i$ and $y^iq_i$ mutually cancel. Therefore we see that
\begin{align}
 f_{31}&=\f_{21}^*(f_{32}) + f_{21} \,,\\
 \f_{31}&=\f_{32}\circ \f_{21}\,.
\end{align}
That means that, in the lowest order, we obtain the composition in the semidirect product category $\SMan\rtimes \funn$. The objects in this category are supermanifolds and the morphisms are pairs $(\f_{21},f_{21})$, where $\f_{21}\co M_1\to M_2$ is a supermanifold map and $f_{21}\in\funn(M_1)$ is an even function on the source supermanifold,  with the composition of pairs $(\f_{32},f_{32})\circ (\f_{21},f_{21})=(\f_{32}\circ \f_{21}, \f_{21}^*f_{32}+f_{21})$.
\end{example}

\begin{remark} The  category $\SMan\rtimes \funn$ is a closed subspace in the formal category $\EThick$, and the whole  $\EThick$ is its formal neighborhood. Our calculations show that there are inclusion and retraction functors
\begin{equation*}
   \SMan\rtimes \funn \rightleftarrows \EThick\,.
\end{equation*}
\end{remark}

\begin{theorem}
 For     pullbacks defined by thick morphisms the identity
 \begin{equation}
  (\Phi_{32}\circ \Phi_{21})^*=\Phi_{21}^*\circ \Phi_{32}^*
 \end{equation}
holds.
\end{theorem}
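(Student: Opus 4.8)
The plan is to verify the functoriality identity $(\Phi_{32}\circ\Phi_{21})^*=\Phi_{21}^*\circ\Phi_{32}^*$ by a direct computation, tracking the auxiliary momentum and coordinate variables through both sides and showing the two nonlinear differential operators agree when applied to an arbitrary even function $h\in\funn(M_3)$. I would start by writing out the right-hand side: first $\Phi_{32}^*[h](y)=h(z)+S_{32}(y,r)-z^\mu r_\mu$, where $r_\mu=\partial h/\partial z^\mu(z)$ and $z^\mu=(-1)^{\tilde\mu}\,\partial S_{32}/\partial r_\mu(y,r)$; then apply $\Phi_{21}^*$ to the function $y\mapsto\Phi_{32}^*[h](y)$, introducing the variables $q_i=\partial(\Phi_{32}^*[h])/\partial y^i(y)$ and $y^i=(-1)^{\tilde\imath}\,\partial S_{21}/\partial q_i(x,q)$.

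The key step is to compute $q_i=\partial(\Phi_{32}^*[h])/\partial y^i$ using the explicit formula~\eqref{eq.phistar2} for $\Phi_{32}^*[h]$. Differentiating $h(z)+S_{32}(y,r)-z^\mu r_\mu$ with respect to $y^i$, and using that $z^\mu$ and $r_\mu$ themselves depend on $y$, all the terms involving $\partial z^\mu/\partial y^i$ and $\partial r_\mu/\partial y^i$ must cancel by exactly the same stationarity mechanism that makes the pullback well-defined — indeed $r_\mu=\partial h/\partial z^\mu$ kills the $h$-variation against the $-z^\mu r_\mu$ variation, and $z^\mu=(-1)^{\tilde\mu}\partial S_{32}/\partial r_\mu$ kills the remaining $r_\mu$-variation — leaving simply $q_i=\partial S_{32}/\partial y^i(y,r)$. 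This is precisely equation~\eqref{eq.q31} from the composition theorem. Combined with $y^i=(-1)^{\tilde\imath}\partial S_{21}/\partial q_i(x,q)$, which is~\eqref{eq.y31}, we see that the variables $(y^i,q_i)$ produced by the iterated pullback $\Phi_{21}^*\circ\Phi_{32}^*$ solve exactly the same system that defines the composite generating function $S_{31}$ via~\eqref{eq.S31}. It then remains to substitute into the outer formula:
\begin{equation*}
\Phi_{21}^*\bigl[\Phi_{32}^*[h]\bigr](x)=\Phi_{32}^*[h](y)+S_{21}(x,q)-y^iq_i=h(z)+S_{32}(y,r)-z^\mu r_\mu+S_{21}(x,q)-y^iq_i\,,
\end{equation*}
and recognizing $S_{32}(y,r)+S_{21}(x,q)-y^iq_i=S_{31}(x,r)$ by~\eqref{eq.S31}, together with $r_\mu=\partial h/\partial z^\mu(z)$ and the fact that $z^\mu$ now automatically satisfies $z^\mu=(-1)^{\tilde\mu}\partial S_{31}/\partial r_\mu(x,r)$ (this last point to be checked from the chain rule for $\partial S_{31}/\partial r_\mu$, where again the $\partial y/\partial r$ and $\partial q/\partial r$ terms cancel against one another via~\eqref{eq.q31}–\eqref{eq.y31}), we conclude that the result equals $\Phi_{31}^*[h](x)=(\Phi_{32}\circ\Phi_{21})^*[h](x)$.

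The main obstacle I anticipate is bookkeeping of signs and of the cancellations of the ``variation'' terms in the supergeometric setting: one must be careful that the formal series obtained by iteration on the two sides really are the same series, i.e., that the fixed-point equations being solved coincide order by order, and that no spurious sign appears from reordering odd factors like $q_i dy^i$ or from the $(-1)^{\tilde\imath}$ and $(-1)^{\tilde\mu}$ prefactors. A clean way to organize this, and avoid the sign minefield, is to argue at the level of the canonical relations themselves rather than the generating functions: since $\Phi_{32}\circ\Phi_{21}$ is by the composition theorem the honest composite of Lagrangian correspondences, and since the pullback $\Phi^*[h]$ has an intrinsic description — it is the ``value of $h$ transported along $\Phi$'' read off from the critical point of $q_i dy^i - p_a dx^a$ restricted appropriately — the equality of the two pullbacks is the statement that transporting $h$ along $\Phi_{32}$ and then along $\Phi_{21}$ is the same as transporting it along the composite relation. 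Making that heuristic precise reduces exactly to the system~\eqref{eq.q31}–\eqref{eq.y31} already analyzed, so the generating-function computation above is in fact the proof; the intrinsic picture is what guarantees that the iterations converge to matching series and serves as a check on the signs.
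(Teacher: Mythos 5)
Your proposal is correct and follows essentially the same route as the paper's own proof: apply the two pullbacks in succession, observe that the intermediate variables $(y^i,q_i)$ satisfy exactly the system \eqref{eq.q31}--\eqref{eq.y31} defining the composite generating function, and recognize $S_{32}+S_{21}-y^iq_i=S_{31}$ via \eqref{eq.S31}. The paper states this computation very tersely, leaving the stationarity cancellations (that $q_i=\partial S_{32}/\partial y^i$ and that $z^\mu$ satisfies the equation for $S_{31}$) implicit, whereas you spell them out; this is a welcome elaboration, not a different argument.
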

\begin{proof}
Consider $f_3\in\funn(M_3)$. Then for $\Phi_{32}^*[f_3]$ we have
 \begin{equation*}
    \Phi_{32}^*[f_3]=f_3+S_{32}-z^{\mu}r_{\mu}
 \end{equation*}
 and for $(\Phi_{21}^*\circ \Phi_{32}^*)[f_3]$ we obtain
 \begin{equation*}
    (\Phi_{21}^*\circ \Phi_{32}^*)[f_3]=\Phi_{21}^*[\Phi_{32}^*[f_3]]=f_3+S_{32}-z^{\mu}r_{\mu}+S_{21}-y^iq_i\,.
 \end{equation*}
 This coincides with
 \begin{equation*}
    \Phi_{31}^*[f_3]=f_3+S_{31}-z^{\mu}r_{\mu}=f_3+S_{32}+S_{21}-y^iq_i-z^{\mu}r_{\mu}\,,
 \end{equation*}
 by~\eqref{eq.S31},
 where $\Phi_{31}=\Phi_{32}\circ \Phi_{21}$\,.
\end{proof}

So far we have dealt with even functions and what we have defined as $\EThick$ will be called the \textbf{even microformal category}. Parallel constructions are based on the \emph{anticotangent} bundles, i.e., the cotangent bundles with reversed parity in the fibers (see~\cite{tv:nonlinearpullback}).   For local coordinates $x^a$ on a supermanifold $M$, let $x^*_a$ be  the conjugate antimomenta (fiber coordinates on $\Pi T^*M$). The canonical odd symplectic form on $\Pi T^*M$ is
\begin{equation}
 \o=d(dx^ax^*_a)=-(-1)^{\at}dx^adx^*_a=-(-1)^{\at}dx^*_adx^a\,,
\end{equation}
and let $-\Pi T^*M$ denote  $\Pi T^*M$   considered with the form $-\o$.

\begin{definition}\label{def.othick}
 An \emph{odd thick morphism} (or \emph{odd microformal morphism}) $\Psi\co M_1\oto M_2$ is specified by a formal odd generating function $S=S(x,y^*)$
 (locally defined) and corresponds to a formal
 canonical relation  $\Psi\subset \Pi T^*M_2\times (-\Pi T^*M_1)$ (denoted by the same letter)
 \begin{equation}\label{eq.orelphi}
    \Psi=\left\{\vphantom{\der{S}{x^a}(x,q)}\,(y^i,y^*_i\,;\, x^a,x^*_a)\ \right|\left. \ y^i= \der{S}{y^*_i}(x,y^*)\,, \ x^*_a=\der{S}{x^a}(x,y^*)\right\}\,.
 \end{equation}
\end{definition}
On the submanifold $\Psi$ we have
\begin{equation}
 dy^iy^*_i-dx^ax^*_a= d\bigl(y^iy^*_i-S\bigr)\,.
\end{equation}

Under changes of coordinates, the odd generating function $S$  of an odd thick morphism has the transformation law
\begin{equation}
    S'(x',y^{'*})= S(x,y^*)- y^iy^*_i + y^{i'}y^*_{i'}
\end{equation}
similar to~\eqref{eq.news}, where   variables in the r.h.s. are determined from the equations similar to those that arise in the even case.

The following theorems~\ref{thm.compodd},~\ref{thm.compoddass} and~\ref{thm.functorodd} are completely analogous to the ``even'' versions above and we omit their proofs.

\begin{theorem} \label{thm.compodd}
There is a well-defined  composition $\Psi_{32}\circ \Psi_{21}$  of odd thick morphisms, which is   an odd thick morphism
\begin{equation*}
    \Psi_{31}\co M_1\oto M_3
\end{equation*}
with the  generating function $S_{31}=S_{31}(x,z^*)$, where
\begin{equation}\label{eq.S31.odd}
    S_{31}(x,z^*)= S_{32}(y,z^*) + S_{21}(x,y^*) - y^iy^*_i
\end{equation}
and $y^i$ and $y^*_i$ are  expressed uniquely via $(x^a, z^*_{\mu})$  from the system
\begin{align}\label{eq.q31.odd}
    y^*_i&= \der{S_{32}}{y^i}\,(y,z^*)
\intertext{and}
 \label{eq.y31.odd}
    y^i&= \der{S_{21}}{y^*_i}\,(x,y^*)\,,
\end{align}
as    power series in $z^*_{\mu}$  and     functional power series in $S_{32}$. \qed
\end{theorem}

\begin{theorem} \label{thm.compoddass}
The composition of odd thick morphisms is associative. \qed
\end{theorem}

Odd thick morphisms form a formal category $\OThick$, which we call, the \textbf{odd microformal category}.  It is the formal neighborhood of the subcategory $\SMan\rtimes \pfunn$ contained as a closed subspace (and there are inclusion and retraction functors).   The affine action of the category $\SMan\rtimes \pfunn$ on supermanifolds of odd functions extends to a nonlinear action of the   formal category $\OThick$, as follows.

\begin{definition}
The \emph{pullback} $\Psi^*$  with respect to an odd thick morphism $\Psi\co M_1\oto M_2$ is  a formal mapping of functional supermanifolds
 \begin{equation}\label{eq.phistar1.odd}
    \Psi^*\co \pfunn(M_2) \to \pfunn(M_1)
 \end{equation}
defined for $\g\in \pfunn(M_2)$ by
\begin{equation}\label{eq.phistar2.odd}
    \Psi^*[\g] (x)= \g(y) + S(x,y^*) - y^iy^*_i\,,
\end{equation}
where $y^*_i$ and $y^i$ are determined from the equations
\begin{equation}\label{eq.phistarq.odd}
    y^*_i=\der{\g}{y^i}\,(y)
\end{equation}
and
\begin{equation}\label{eq.phistary.odd}
    y^i= \der{S}{y^*_i}(x,y^*)\,.
\end{equation}
\end{definition}

\begin{theorem} \label{thm.functorodd}
 For odd thick morphisms, the identity
 \begin{equation}
  (\Psi_{32}\circ \Psi_{21})^*=\Psi_{21}^*\circ \Psi_{32}^*\,
 \end{equation}
 holds. \qed
\end{theorem}

As in the even case, the pullback $\Psi^*$ is a formal nonlinear differential operator for which the $k$th term in the power expansion contains derivatives of orders $\leq k-1$. An analog of Theorem~\ref{thm.tphi} holds~\cite{tv:nonlinearpullback}. One can formulate   ``odd'' versions of Definition~\ref{def.nonlinhom} and Conjecture~\ref{conj.nonlinhom}.

\begin{remark} Pullback of functions with respect to a thick morphism is a particular case of the composition of thick morphisms (both in bosonic and fermionic cases)\,---\,the same as for usual pullbacks. One may wish to consider ``thick functions'' on supermanifolds as thick morphisms to $\RR$ or $\CC$. One may also wish to consider gluing of ``thick supermanifolds'' from ordinary ones with the help of thick diffeomorphisms  or, for example, to introduce ``thick analogs'' of Lie groups. Constructions in this section suggest many attractive paths which we hope to explore in the   future.
\end{remark}


\section{Application to vector bundles: the notion of the adjoint for a nonlinear map}\label{sec.adjoint}

In this section, we   generalize the notion of the adjoint of a linear operator. We   show that  using thick morphisms
one can speak of     the adjoint     for a nonlinear map  of vector spaces or vector bundles. Such generalized adjoints are thick morphisms rather than  ordinary maps. There are two versions of this construction, ``even'' and ``odd''.

Our construction is based on  the  canonical diffeomorphism between the cotangents of dual vector bundles discovered by Kirill~Mackenzie and Ping~Xu~\cite[Thm.~5.5]{mackenzie:bialg}\footnote{The special case of $E=TM$, i.e., the diffeomorphism $T^*TM\cong T^*T^*M$, is due to
Tulczyjew~\cite{tulczyjew:1977}; the case of general $E$ was  considered independently by J.-P.~Dufour in an unpublished  work.} (see also~\cite{mackenzie:diffeomorphisms},~\cite[Ch.~9]{mackenzie:book2005}; and \cite{tv:graded} for the super case)\,:
\begin{equation}
    T^*E\cong T^*E^*\,,
\end{equation}
which will be referred to  as the  {Mackenzie--Xu transformation}. (Some authors use the name `Legendre transformation', but this is  really confusing since the  Legendre transformation or transform  in the standard sense  acts on functions, not   points.)
There is a parallel canonical diffeomorphism for the fermionic case~\cite{tv:graded}
\begin{equation}
    \Pi T^*E\cong \Pi T^*(\Pi E^*)\,.
\end{equation}
Recall these natural diffeomorphisms in the form suitable for our purposes. For a vector bundle $E\to M$, denote   local coordinates on the base by $x^a$ and   linear coordinates in the fibers by $u^i$. The transformation law for $u^i$ has the form \new{$u^i=u^{i'}T_{i'}{^{i}}$}.
Denote the fiber coordinates for the dual bundle $E^*\to M$ and the antidual bundle $\Pi E^*\to M$ by $u_i$ and $\h_i$, respectively.  We assume that the invariant bilinear forms are $u^iu_i$ and $u^i\h_i$. (This means that $u_i$ and $\h_i$ are the right coordinates with respect to the basis which is `right dual' to a basis in $E$.) Consider the cotangent and the anticotangent bundles for $E$. Denote the canonically conjugate momenta for $x^a, u^i$ by $p_a,p_i$, and the conjugate antimomenta, by $x^*_a, u^*_i$. A similar notation will be used for $E^*$ and $\Pi E^*$.

The \emph{Mackenzie--Xu transformation}
\begin{equation}\label{bbb}
    \kir\co T^*E\to T^*E^*
\end{equation}
is defined by the formulas
\begin{equation}\label{eq.kir}
 \kir^*(x^a)=x^a\,, \ \kir^*(u_i)=p_i\,, \ \kir^*(p_a)=-p_a\,, \ \kir^*(p^i)=(-1)^{\itt}u^i\,.
\end{equation}
It is well-defined and is an antisymplectomorphism. (The choice of signs in~\eqref{eq.kir} agrees with that in book~\cite{mackenzie:book2005} and  differs from that of~\cite{tv:graded}. The choice used in~\cite{tv:graded} gives a symplectomorphism.)

An odd version of this transformation~\cite{tv:graded} (which we denote   by the same letter)
\begin{equation}
    \kir\co \Pi T^*E\to \Pi T^*(\Pi E^*)
\end{equation}
is defined by
\begin{equation}\label{eq.kirodd}
 \kir^*(x^a)=x^a\,, \ \kir^*(\h_i)=u^*_i\,, \ \kir^*(x^*_a)=-x^*_a\,, \ \kir^*(\h^{*i})= u^i\,.
\end{equation}
(note the absence of signs depending on parities). It is also an antisymplectomorphism  with respect to the canonical odd symplectic structures.

\begin{remark}
 The  invariance of   formulas~\eqref{eq.kir}, \eqref{eq.kirodd} is nontrivial and follows from the analysis of $T^*E$ and $\Pi T^*E$  as \emph{double vector bundles} over $M$. On the other hand, from the coordinate formulas~\eqref{eq.kir} and \eqref{eq.kirodd}, it is obvious that $\kir^*\o=-\o$   for the canonical symplectic structures. Moreover, one can immediately see that for the canonical Liouville $1$-forms
 \begin{equation}
  \kir^*(dx^ap_a+du_ip^i)=-(dx^ap_a+du^ip_i) +d(u^ip_i)
 \end{equation}
on the cotangent bundle and
\begin{equation}
  \kir^*(dx^ax^*_a+d\h_i\h^{*i})=-(dx^ax^*_a+du^iu^*_i) +d(u^iu^*_i)\,,
 \end{equation}
on  the anticotangent bundle. Note that 
$u^ip_i$ and $u^iu^*_i$ are invariant functions. 
\end{remark}

Now we proceed to constructing generalized adjoints.
Let $E_1$ and $E_2$ be vector bundles over a fixed base $M$. Consider a fiber map over $M$\,,
\begin{equation*}
    \Phi\co E_1\to E_2\,,
\end{equation*}
which is not necessarily fiberwise linear. (Here $\Phi$ is an ordinary map, not a thick morphism.) In local coordinates, it is given by
\begin{equation*}
   \Phi^*(y^a)=x^a\,, \  \Phi^*(w^{\a})=\Phi^{\a}(x,u)\,,
\end{equation*}
for some functions $\Phi^{\a}(x,u)$, where   $u^i$ and $w^{\a}$  are linear coordinates on the fibers of $E_1$ and $E_2$. For the fiber coordinates on the dual bundles we   use the  same letters  with the lower indices so that the forms $u^iu_i$ and $w^{\a}w_{\a}$ give the invariant pairings.

Note that it makes sense to speak about fiberwise thick morphisms. 

\begin{theorem} \label{thm.adjoint}
1. For an arbitrary  fiberwise map of vector bundles $\Phi\co E_1\to E_2$ over a base $M$, 
there are a fiberwise even thick morphism  \emph{(`adjoint')}
\begin{equation}
    \Phi^*\co E_2^*\tto E_1^*\,,
\end{equation}
and a  fiberwise odd thick morphism \emph{(`antiadjoint')}
\begin{equation}
    \Phi^{*\Pi}\co \Pi E_2^*\oto \Pi E_1^*\,,
\end{equation}
such that if  the map $\Phi\co E_1\to E_2$ is  fiberwise linear, i.e., a vector bundle homomorphism, then the thick morphisms $\Phi^*$ and  $\Phi^{*\Pi}$ are ordinary maps which are  the usual adjoint homomorphism and the adjoint homomorphism combined with the parity reversion, respectively.

2. For the composition of fiberwise maps of vector bundles over $M$\,,
\begin{equation}\label{eq.compos}
    E_1\fto{\Phi_{21}} E_2\fto{\Phi_{32}} E_3\,,
\end{equation}
we have the equality
\begin{equation}\label{eq.composadj}
    \bigl(\Phi_{32}\circ \Phi_{21}\bigr)^*= \Phi_{21}^*\circ \Phi_{32}^*\,,
\end{equation}
as   even thick morphisms $E_3^*\tto E_1^*$, and the equality
\begin{equation}\label{eq.composoddadj}
    \bigl(\Phi_{32}\circ \Phi_{21}\bigr)^{*\Pi}= \Phi_{21}^{*\Pi}\circ \Phi_{32}^{*\Pi}\,,
\end{equation}
as   odd thick morphisms $\Pi E_3^*\oto \Pi E_1^*$\,.
\end{theorem}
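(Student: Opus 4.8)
The plan is to construct the adjoint thick morphism $\Phi^*\co E_2^*\tto E_1^*$ by composing the Mackenzie--Xu transformations with the ordinary cotangent lift of $\Phi$, and then to read off the generating function in coordinates. Concretely, given a fiberwise map $\Phi\co E_1\to E_2$, its ordinary pullback $\Phi^*\co \fun(E_2)\to\fun(E_1)$ determines a Lagrangian relation $\mathrm{gr}(d\Phi)\subset T^*E_1\times(-T^*E_2)$ (or equivalently $T^*E_2\times(-T^*E_1)$, reversing the factor), which is nothing but the conormal/graph-of-differential relation of the smooth map $\Phi$; it is generated by the function $w^\a\Phi_\a(x,u)$ of the variables $(x,u)$ on $E_1$ and the momenta $w_\a$ on $E_2^*$-side (after dualizing). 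Applying $\kir\co T^*E_1\to T^*E_1^*$ on one factor and $\kir\co T^*E_2\to T^*E_2^*$ on the other produces a formal canonical relation inside $T^*E_1^*\times(-T^*E_2^*)$, and hence — after checking it satisfies the projection/independence conditions of Definition~\ref{def.thick} — a thick morphism $E_2^*\tto E_1^*$. The key computational step is to extract its generating function $S(y,\text{momenta})$ from the $1$-form identities recorded in the Remark after~\eqref{eq.kirodd}: combining $\kir^*(dx^ap_a+du_ip^i)=-(dx^ap_a+du^ip_i)+d(u^ip_i)$ with the generating-function identity~\eqref{eq.dF} for $\mathrm{gr}(d\Phi)$ gives the Legendre-type formula, and one finds that the adjoint is generated by the function
\begin{equation*}
    S(y, u_i) = -\,y^a\cdot(\text{momenta}) \ \text{plus the term}\ (-1)^{\itt}\,w_\a\,\Phi^\a(y,u)\ \text{with}\ w_\a\ \text{eliminated via}\ u_i = \der{\bigl(w_\a\Phi^\a\bigr)}{u^i},
\end{equation*}
i.e.\ precisely the Legendre transform in the fiber variables of the ``function'' $w_\a\Phi^\a(x,u)$ defining $\Phi$. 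The odd/antiadjoint case is identical word-for-word using~\eqref{eq.kirodd} and the second $1$-form identity in the Remark, with all parity signs dropping out as noted there; it lands in $\OThick$ by Definition~\ref{def.othick}.

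For part~1 it then remains to check two things. First, the \emph{linearity reduction}: if $\Phi$ is a vector-bundle homomorphism, so $\Phi^\a(x,u)=\Phi^\a_i(x)u^i$ is linear in $u$, then the Legendre transform in $u$ is exact and the generating function becomes linear in the momenta, $S(y,u_i)=$ (a term $\sim y^a p_a$) $+\ (-1)^{?}\,\Phi^\a_i(y)\,\cdot$(something)$\cdot u_i$ — that is, by the Example after Definition~\ref{def.thick} (the computation $\Phi^*[y^i c_i]=S(x,c)$), $S$ being affine-linear in momenta means the thick morphism is an ordinary map, and one reads off that this ordinary map is $u_i\mapsto \Phi^\a_i(x)w_\a|_{w\mapsto u}$, i.e.\ the transpose/adjoint homomorphism $E_2^*\to E_1^*$ (respectively $\Pi E_2^*\to\Pi E_1^*$ with an extra parity shift in the odd case). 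Second, \emph{well-definedness}: that $S$ is even (resp.\ odd), that the $(x,q)$-variables (here the base coordinates $y^a$ on $M$ together with the fiber momenta dual to $E_1^*$) form a coordinate system on the relation near the zero section, and that $S$ is genuinely a formal power series in those momenta — this is automatic because $\Phi^\a$ is smooth and the Legendre elimination of $w_\a$ is solved by iteration exactly as in the composition theorem. Here ``fiberwise thick morphism'' is used in the sense flagged just before the theorem: $S$ does not involve the base momenta nonlinearly, only fiber momenta, so the underlying ordinary map of the adjoint covers $\mathrm{id}_M$.

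For part~2, the functoriality, the clean approach is to \emph{deduce it from the functoriality already in hand} rather than recompute. The assignment $\Phi\mapsto(\text{Lagrangian relation of }d\Phi)$ is contravariantly functorial for the composition of relations (this is the classical statement that conormal relations of maps compose correctly, and transversality holds automatically near the zero section in the formal setting — it is exactly the mechanism of Theorem~1 in Section~\ref{sec.categories}). The Mackenzie--Xu transformations $\kir$ are fixed diffeomorphisms applied factor-by-factor, so conjugating the composition of relations by $\kir$'s on each $T^*E_i^*$ (the middle $\kir$ on $T^*E_2^*$ being applied once on each side and cancelling appropriately) preserves composition. Hence $(\Phi_{32}\circ\Phi_{21})^* = \Phi_{21}^*\circ\Phi_{32}^*$ as formal canonical relations; and since the generating function of a thick morphism is determined by the relation (the constants of integration being fixed by the requirement that everything vanish along the zero section, which is compatible with all the maps involved), the identity holds as an identity of thick morphisms. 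The odd case~\eqref{eq.composoddadj} is verbatim the same with $\kir$ of~\eqref{eq.kirodd}.

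\textbf{Main obstacle.} The genuinely delicate point is the well-definedness of the \emph{generating function} (not just the relation): the Mackenzie--Xu map is an \emph{anti}symplectomorphism, so one must be careful with the sign of $\o$ and with which factor carries the minus sign ($T^*M_2\times(-T^*M_1)$ in Definition~\ref{def.thick}), and then track that the Legendre transforms implicit in~\eqref{eq.sandf}, in $\kir^*(dx^ap_a+\dots)=-(\dots)+d(u^ip_i)$, and in the defining function $w_\a\Phi^\a$ all compose to give a \emph{single} well-defined $S$ with the right parity and the correct normalization along the zero section. I expect the bookkeeping of these three nested Legendre transforms and the resulting sign $(-1)^{\itt}$ in front of $\Phi^\a$ to be where essentially all the work lies; everything else (the power-series/iteration solvability, the linear reduction, and the functoriality) follows formally from the results of Section~\ref{sec.categories} and the Remark above.
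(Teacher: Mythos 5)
Your overall strategy is the same as the paper's: view $\Phi$ as the canonical relation $R_\Phi\subset T^*E_2\times(-T^*E_1)$ with generating function $x^aq_a+\Phi^\a(x,u)q_\a$, apply the Mackenzie--Xu antisymplectomorphisms factor by factor, pass to the opposite relation to get a thick morphism $E_2^*\tto E_1^*$ (and the odd analogue via \eqref{eq.kirodd}), check the fiberwise-linear reduction, and obtain part~2 from the facts that composition of maps gives composition of the relations in the same order, that $\kir$ preserves this, and that taking opposites reverses it. All of that matches the paper's proof.

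The problem is the step you yourself flag as carrying ``essentially all the work'': the extraction of the generating function, which as you describe it would fail. There is no Legendre transform in the fiber variables and no elimination of $w_\a$. Because $\kir$ exchanges fiber coordinates with fiber momenta ($\kir^*(u_i)=p_i$, $\kir^*(p^i)=(-1)^{\itt}u^i$, and similarly $q_\a\leftrightarrow w_\a$ on the $E_2$ factor), the generating function of the adjoint is obtained from $S=x^aq_a+\Phi^\a(x,u)q_\a$ by direct substitution,
\begin{equation*}
S^*(y^a,w_\a;p_a,p^i)=y^ap_a+\Phi^\a\bigl(y,(-1)^{\itt}p^i\bigr)\,w_\a\,,
\end{equation*}
which is a function of the source coordinates $(y^a,w_\a)$ on $E_2^*$ and the target momenta $(p_a,p^i)$ of $E_1^*$, exactly the format required by Definition~\ref{def.thick}. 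The equation $u_i=\der{\Phi^\a}{u^i}\bigl(y,(-1)^{\itt}p^i\bigr)w_\a$, which you propose to ``solve for $w_\a$'', is one of the equations \emph{of the relation} (it expresses the target fiber coordinate $u_i$), not a constraint to be eliminated from the generating function; the elimination is in general impossible (already for a fiberwise-linear non-invertible $\Phi$, or for bundles of different ranks, $u_i=\Phi_i{}^\a(x)w_\a$ cannot be solved for $w_\a$), and the would-be output, a function of $(y,u_i)$, i.e.\ of target fiber \emph{coordinates} rather than target momenta, does not define a thick morphism at all. With the correct $S^*$ the remaining checks you outline go through immediately: if $\Phi^\a(x,u)=u^i\Phi_i{}^\a(x)$ then $S^*$ is linear in $(p_a,p^i)$, so $\Phi^*$ is the ordinary adjoint $x^a=y^a$, $u_i=\Phi_i{}^\a(y)w_\a$ (and similarly in the odd case, with no parity sign); but as written, the core computation of your argument needs to be redone rather than merely book-kept.
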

\begin{proof}  Consider a fiberwise  map\footnote{\,For clarity, although against our own taste,  we   use the physicists' notation with  arguments of functions equipped with   indices.} $\Phi\co E_1\to E_2$\,,
\begin{equation*}
    (x^a,u^i)\mapsto \bigl(y^a=x^a, w^{\a}=\Phi(x^a,u^i)\bigr)\,.
\end{equation*}
To the map $\Phi$  corresponds
the canonical relation $R_{\Phi}\subset T^*E_2\times (-T^*E_1)$\,,
\begin{equation*}
    R_{\Phi}=\left\{\vphantom{\der{S}{x^a}(x,q)}
    (y^a,w^{\a},q_i,q_{\a}\,;\, x^a,u^i, p_a, p_{i})\ \right|\left.\vphantom{\der{S}{x^a}(x,q)}
   (-1)^{\at}dq_ay^a+(-1)^{\att}dq_{\a}w^{\a}+ dx^ap_a+du^ip_i= dS\right\},
 \end{equation*}
with the generating function $S=S(x^a,u^i,q_i,q_{\a})$, where
\begin{equation}
    S= x^aq_a+ \Phi^{\a}(x^a,u^i)q_{\a}\,.
\end{equation}
We define the  thick morphism $\Phi^*\co E^*_2\tto E^*_1$   by a generating function $S^*=S^*(y^a,w_{\a}, p_a,p^i)$, where
\begin{equation}
    S^*:= y^ap_a+ \Phi^{\a}(y^a,(-1)^{\itt}p^i)w_{\a}\,.
\end{equation}
The corresponding canonical relation  $ {\Phi^*}\subset T^*E^*_1\times (-T^*E^*_2)$ is given by the equation
\begin{equation*}
    (-1)^{\at}dp_ax^a+(-1)^{\itt}dp^iu_i+dy^aq_a+dw_{\a}q^{\a}=dS^*\,,
\end{equation*}
 or, more explicitly,
\begin{equation*}
    x^a=y^a\,, \ u_i=\der{\Phi^{\a}}{u^i}(y,(-1)^{\itt}p^i)w_{\a}\,, \ q_a=p_a+\der{\Phi^{\a}}{x^a}(y,(-1)^{\itt}p^i)w_{\a}\,, \ q^{\a}=(-1)^{\att}\Phi^{\a}(y,(-1)^{\itt}p^i)\,.
\end{equation*}
The   construction of the thick morphism $\Phi^*$ can be stated  geometrically as follows.
We first    apply  the  transformation $\kir$ to the canonical relation $R_{\Phi}\subset T^*E_2\times (-T^*E_1)$. Since $\kir$ is an antisymplectomorphism, we obtain a Lagrangian submanifold $(\kir\times \kir)(R_{\f})\subset -T^*E^*_2\times T^*E^*_1$.
The thick morphism $\Phi^*$ is then defined by the opposite relation\,:
\begin{equation*}
    \Phi^*:=\bigl((\kir\times \kir)(R_{\f})\bigr)^{\text{op}}\subset T^*E^*_1\times (-T^*E^*_2)\,,
\end{equation*}
Expressing this by generating functions, we arrive at the formulas above. One can see that the thick morphism $\Phi^*\co E^*_2\tto E^*_1$ is \new {also} fiberwise. Let us check   that $\Phi^*$ is the ordinary adjoint when $\Phi\co E_1\to E_2$ is linear on fibers. Indeed, in such a case we have
\begin{equation*}
  \Phi(x^a,u^i)=u^i\Phi_i{}^{\a}(x)\,,
\end{equation*}
hence the above formulas give
\begin{equation*}
    x^a=y^a\,, \ u_i=\Phi_i{}^{\a}(y)w_{\a}\,
\end{equation*}
as expected.
The odd thick morphism
$\Phi^{*\Pi}\co \Pi E^*_2\oto \Pi E^*_1$
is built in a similar way: we take the canonical relation $R_{\Phi}\subset \Pi T^*E_2\times (-\Pi T^*E_1)$ corresponding to a map $\Phi\co E_1\to E_2$, apply the odd version of the Mackenzie--Xu transformation and then take the opposite relation.

To obtain equations~\eqref{eq.composadj} and ~\eqref{eq.composoddadj}, notice that  the composition of maps~\eqref{eq.compos} induces the composition of the corresponding canonical relations between the cotangent bundles  in the same order. This is preserved by the Mackenzie--Xu transformation. Taking the opposite relations reverses the order.
\end{proof}

\begin{corollary} \label{cor.pushforward}
On functions on the dual bundles, the pullback with respect to the
adjoint $ \Phi^*\co E^*_2\tto E^*_1$
induces a  \emph{`nonlinear pushforward map'}
 \begin{equation}
  \Phi_*:=(\Phi^*)^*\co \funn(E_1^*)\to \funn(E_2^*)\,.
 \end{equation}
The
restriction of $\Phi_*$ to  the space of even sections $\funn(M,E_1)$ regarded as the subspace in $\funn(E_1^*)$ consisting of the fiberwise-linear functions  takes it to the subspace $\funn(M,E_2)$ in $\funn(E_2^*)$ and coincides  with the usual pushforward  of sections $\Phi_*(\vp)=\Phi\circ \vp$.
\end{corollary}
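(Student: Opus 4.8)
The plan is the following. By Theorem~\ref{thm.adjoint}, $\Phi^*\co E_2^*\tto E_1^*$ is a (fiberwise even) thick morphism, so its nonlinear pullback $\Phi_*=(\Phi^*)^*$ is automatically a well-defined formal map $\funn(E_1^*)\to\funn(E_2^*)$ and there is nothing to prove for the first assertion of the Corollary. It remains to compute $\Phi_*$ on sections. A section $\vp\in\funn(M,E_1)$ with local components $\vp^i(x)$ is, by definition, identified with the fiberwise-linear function $\ell_{\vp}$ on $E_1^*$, $\ell_{\vp}(x,u)=\vp^i(x)u_i$, where $u_i$ are the fiber coordinates on $E_1^*$ dual to $u^i$. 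I would substitute $g=\ell_{\vp}$ into the defining relations~\eqref{eq.phistar2}, \eqref{eq.phistarq}, \eqref{eq.phistary} of the pullback along $\Phi^*$, with the generating function $S^*=S^*(y^a,w_{\a},p_a,p^i)=y^ap_a+\Phi^{\a}(y,(-1)^{\itt}p^i)w_{\a}$ found in the proof of Theorem~\ref{thm.adjoint}; here $(y^a,w_{\a})$ are coordinates on the source $E_2^*$ and $(p_a,p^i)$ the momenta conjugate to the coordinates $(x^a,u_i)$ on the target $E_1^*$.

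The decisive observation is that for a \emph{fiberwise-linear} $g$ the fixed-point system~\eqref{eq.phistarq}--\eqref{eq.phistary} is solved in closed form, with no perturbative series to sum. Since $S^*$ is affine in the base momenta $p_a$ with linear part $y^ap_a$, equation~\eqref{eq.phistary} gives $x^a=y^a$ at once; hence $p^i=\der{\ell_{\vp}}{u_i}$ equals $\vp^i(y)$ up to a Koszul sign and, being the derivative of a linear function, is independent of the remaining unknowns, so $u_i$ is then read off from~\eqref{eq.phistary} in one further step. (The momentum $p_a$ does depend on the unknowns, but it drops out before its value is needed.) Feeding these values into~\eqref{eq.phistar2}, the $y^ap_a$ coming from $S^*$ cancels the $x^ap_a$ in the subtracted term $x^ap_a+u_ip^i$ because $x^a=y^a$; by linearity of $\ell_{\vp}$ in the fibers (Euler's relation $\ell_{\vp}=u_ip^i$) the term $g$ cancels the remaining $u_ip^i$; what survives is $\Phi^{\a}\bigl(y,(-1)^{\itt}p^i\bigr)w_{\a}$. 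The sign $(-1)^{\itt}$ built into $S^*$ is exactly what converts $(-1)^{\itt}p^i$ into $\vp^i(y)$, so the answer is $\Phi^{\a}\bigl(y,\vp(y)\bigr)w_{\a}=(\Phi\circ\vp)^{\a}(y)\,w_{\a}$. This function is again fiberwise-linear and is precisely the one attached to the section $\Phi\circ\vp$ of $E_2$ (which is a section since $\Phi$ is a map over $M$); therefore $\Phi_*$ sends $\funn(M,E_1)$ into $\funn(M,E_2)$ and acts there by $\vp\mapsto\Phi\circ\vp$, as claimed.

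The only genuine labour is sign bookkeeping, and that is where I expect the main (but routine) difficulty to lie: one must verify that the Koszul sign in $\der{\ell_{\vp}}{u_i}=(-1)^{\itt}\vp^i$, the sign $(-1)^{\itt}$ in $S^*$, and the signs produced when differentiating $S^*$ in $p^i$ conspire so that $\Phi^{\a}$ gets evaluated on $\vp$ itself rather than on $(-1)^{\itt}\vp^i$; once the left/right-derivative conventions are fixed this is mechanical. A coordinate-free variant would represent $\ell_{\vp}$ by the Lagrangian submanifold $\{\,d\ell_{\vp}\,\}\subset T^*E_1^*$ and push it forward along the canonical relation defining $\Phi^*$ using the Mackenzie--Xu identifications $T^*E_j\cong T^*E_j^*$; but this does not obviously shorten the computation, so I would present the direct version above.
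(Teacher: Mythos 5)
Your proposal is correct and follows essentially the same route as the paper: write the pullback along the adjoint thick morphism with the generating function $S^*=y^ap_a+\Phi^{\a}(y,(-1)^{\itt}p^i)w_{\a}$, observe that for a fiberwise-linear function the fixed-point equations close exactly (with $p^i=(-1)^{\itt}v^i$, so $(-1)^{\itt}p^i=v^i$), and note the cancellations $y^ap_a$ against $x^ap_a$ and $\ell_{\vp}$ against $u_ip^i$, leaving $\Phi^{\a}(x,v(x))w_{\a}$, the linear function of the section $\Phi\circ\vp$. The paper does the same computation, merely writing the general formula for $\Phi_*[f]$ first and then specializing to $f=v^i(x)u_i$.
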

\begin{proof}
 The nonlinear pushforward $ \Phi_* \co \funn(E_1^*)\to \funn(E_2^*)$ is defined as the pullback with respect to the thick morphism $\Phi^*\co E^*_2\tto E^*_1$.
 To an even function $f=f(x^a,u_i)$ the map $\Phi_*$ assigns the even function $g=\Phi_*[f]$, where $g(x^a,w_{\a})$  is given by:
 \begin{equation}
  g(x,w_{\a})=f(x,u_i)+ \Phi^{\a}\bigl(x,(-1)^{\itt}p^i\bigr)w_{\a}-u_ip^i\,,
  \end{equation}
and  $ u_i,  p^i$ are found from the equations
\begin{equation}
 p^i=\der{f}{u_i}(x,u_i)
\end{equation}
and
\begin{equation}
 u_i=\der{\Phi^{\a}}{u^i}\Bigl(x,(-1)^{\itt}\der{f}{u_i}(x,u_i)\Bigr)w_{\a}\,.
\end{equation}
The latter equation is solvable by iterations.
Now let the function $f$ on $E^*_1$ have the form $f(x,u_i)=v^i(x)u_i$, which corresponds to an even section $\vp=v^i(x)\ep_i$ of the bundle $E_1$. Then
\begin{equation}
 p^i=(-1)^{\itt}v^i(x)\,,
\end{equation}
hence
\begin{equation}
  \Phi_*[f]= v^i(x)u_i+ \Phi^{\a}\bigl(x,v^i(x)\bigr)w_{\a}-u_i(-1)^{\itt}v^i(x)= \Phi^{\a}\bigl(x,v^i(x)\bigr)w_{\a}\,,
  \end{equation}
which is the fiberwise linear function on $E^*_2$ corresponding to the   section $\Phi\circ \vp$\,.
\end{proof}

Similar statement holds for the odd case: there is an \emph{odd nonlinear pushforward} map  $\Phi_*^{\Pi}:=(\Phi^{*\Pi})^*$\,,
 \begin{equation}
  \Phi_*^{\Pi}\co \pfunn(\Pi E_1^*)\to \pfunn(\Pi E_2^*)\,.
 \end{equation}
On the space of even sections $\vp\in \funn(M,E_1)$ regarded as a subspace $\funn(M, E_1)\subset \pfunn(\Pi E_1^*)$ of the odd fiberwise-linear functions on $\Pi E_1^*$, the map $\Phi_*^{\Pi}$  again coincides with the obvious pushforward $\vp\mapsto \Phi\circ \vp$.

The algebra of fiberwise polynomial functions on the dual bundle $E^*$  is freely generated by the sections of $E$ over the algebra of functions on the base $M$. For the vector bundle homomorphisms   $E_1\to E_2$, the pushforward of functions $\fun(E_1^*)\to \fun(E_2^*)$ is the algebra homomorphism extending a linear map from free  generators. As   seen from Corollary~\ref{cor.pushforward}, the nonlinear pushforward map $\Phi_*\co \funn(E_1^*)\to \funn(E_2^*)$  can   be similarly regarded as the  extension of a nonlinear homomorphism from generators.

\begin{remark}
If the base $M$ is a point, we have a nonlinear map of vector spaces $\Phi\co V\to W$.  Replacing it by  Taylor expansion gives   a sequence of linear maps $\Phi_k\co S^kV\to W$.  The functions on the dual spaces can themselves be seen as elements of the symmetric powers. By expanding the pushforward $\Phi_*$ into a Taylor series, we arrive at linear maps of the form $S^n(\oplus S^pV)\to \oplus S^qW$. It would be interesting to obtain for them a  purely algebraic description.
\end{remark}

\begin{remark}
 From the proof of Theorem~\ref{thm.adjoint} it is clear that instead of an ordinary map one can start from a fiberwise even thick morphism $E_1\tto E_2$ and construct its adjoint  $E_2^*\tto E_1^*$ by the same method; or start from a fiberwise odd thick morphism $E_1\oto E_2$ and construct the antiadjoint $\Pi E_2^*\oto \Pi E_1^*$. 
\end{remark}

\begin{remark} `Nonlinear adjoints' can be   generalized   to vector bundles over different bases by using the concept of comorphisms of Higgins and Mackenzie~\cite{mackenzie_and_higgins:duality}\footnote{\,This notion has  a rich pre-history and numerous connections. Besides  citations in~\cite{mackenzie_and_higgins:duality}, see Guillemin and Sternberg~\cite{gs:geomasymp}, who suggested to redefine morphisms of vector bundles as, basically, comorphisms. A close notion  was introduced in~\cite{tv:class} in connection with integral transforms. In~\cite{cattaneo-dherin-weinstein:comorphisms} it is argued that comorphisms are   ``the correct'' notion in the context of Poisson geometry.}.  Suppose  $E_1\to M_1$ and  $E_2\to M_2$ are  fiber bundles over bases  $M_1$ and $M_2$. Then a \emph{bundle morphism} $\Phi\co E_1\to E_2$ can be defined as a fiberwise map over  the  fixed base $E_1\to \f^*E_2$ and a \emph{bundle comorphism} $\Psi\co E_1\to E_2$ can be defined as a fiberwise map over  the    fixed base  $\psi^*E_1\to  E_2$.  (It would be better to use for morphisms and comorphisms arrows of different shape.) In both cases, there is a map of the bases $\f$  or $\psi$
pointing in the same direction  for a morphism   and in the opposite direction  for a comorphism.

For bundles over the same base, morphisms and comorphisms over the identity map  coincide, and for manifolds regarded as `zero vector bundles', morphisms are  ordinary maps while comorphisms are    morphisms in the opposite category. As was shown in~\cite{mackenzie_and_higgins:duality}, for vector bundles (assuming the fiberwise linearity for maps over a fixed base), the adjoint of a morphism $E_1\to E_2$ is a comorphism $E^*_2\to E^*_1$ and vice versa; so  this gives an anti-isomorphism of the  two categories of vector bundles. To generalize this to our setup, one may wish to keep a map between the bases as an ordinary map while using fiberwise thick morphisms over a fixed base. This incorporates the possible nonlinearity of morphisms. In this way, one obtains base-changing `thick morphisms' and `thick comorphisms' of vector bundles to which the duality theory  extends.
\end{remark}

\section{Application to Lie algebroids and homotopy Poisson brackets}\label{sec.algebroid}

It is well known that, for   Lie algebras  $\mathfrak{g}_1$ and $\mathfrak{g}_2$,   a linear map of the underlying vector spaces $\f\co \mathfrak{g}_1\to \mathfrak{g}_2$
is a Lie algebra homomorphism  if the adjoint map of the dual spaces $\f^*\co \mathfrak{g}^*_2\to \mathfrak{g}^*_1$  is   Poisson with respect to the induced Lie--Poisson brackets (also known as the Berezin--Kirillov brackets). The same holds true
for Lie algebroids~\cite[Ch.10]{mackenzie:book2005} (see~\cite{mackenzie_and_higgins:duality} for base-changing morphisms). In this section we use the  construction of the adjoint for a nonlinear map of vector bundles and  the results from~\cite{tv:nonlinearpullback} to establish    the homotopy analogs of these statements for the case of $L_{\infty}$-morphisms of $L_{\infty}$-algebroids. It is convenient
to work in the super setting, though we generally suppress the prefix `super-'.

For simplicity consider the case of   fixed base. We do not consider the `if and only if' form of the statement. Our main theorem here is as follows.

\begin{theorem}\label{thm.linf}
 An $L_{\infty}$-morphism of $L_{\infty}$-algebroids over a base $M$ induces  $L_{\infty}$-morphisms of the homotopy Poisson and homotopy Schouten algebras of functions on the  dual and  antidual bundles respectively.
\end{theorem}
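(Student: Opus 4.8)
The plan is to translate the $L_\infty$-algebroid data into the language of homological vector fields and then apply Theorem~\ref{thm.adjoint} together with the Hamilton--Jacobi condition from~\cite{tv:nonlinearpullback}. Recall that an $L_\infty$-algebroid structure on a vector bundle $E\to M$ is the same as a homological vector field $Q$ on the supermanifold $\Pi E$, equivalently, a degree $+1$ self-commuting Hamiltonian on the cotangent bundle, and dually a homotopy Schouten (resp. homotopy Poisson) structure on $E^*$ (resp. $\Pi E^*$) via the Mackenzie--Xu identifications $T^*(\Pi E)\cong T^*(\Pi E^*)$ and $\Pi T^*(\Pi E)\cong \Pi T^*E^*$; the brackets on functions on the (anti)dual bundle are generated by a master Hamiltonian, odd (resp. even), which we denote $H_i\in \fun(T^*E_i^*)$ (resp.\ its odd analog on $\Pi T^*E_i^*$) for $i=1,2$. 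An $L_\infty$-morphism $\Phi\co (\Pi E_1,Q_1)\to(\Pi E_2,Q_2)$ is precisely a (possibly nonlinear) fiberwise map $\Phi\co \Pi E_1\to \Pi E_2$ over $M$ intertwining $Q_1$ and $Q_2$; since the base is fixed, this is a fiberwise map of vector bundles in the sense of Section~\ref{sec.adjoint} (applied to $\Pi E_i$, or equivalently recorded on $E_i$).

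First I would invoke Theorem~\ref{thm.adjoint}: the fiberwise map $\Phi$ has an adjoint thick morphism $\Phi^*\co E_2^*\tto E_1^*$ and an antiadjoint odd thick morphism $\Phi^{*\Pi}\co \Pi E_2^*\oto \Pi E_1^*$, and by Corollary~\ref{cor.pushforward} these induce the nonlinear pushforwards $\Phi_*=(\Phi^*)^*\co \funn(E_1^*)\to\funn(E_2^*)$ and $\Phi_*^\Pi=(\Phi^{*\Pi})^*\co \pfunn(\Pi E_1^*)\to\pfunn(\Pi E_2^*)$. Second, I would recall from~\cite{tv:nonlinearpullback} the key fact that the pullback w.r.t.\ a thick morphism is an $L_\infty$-morphism of the homotopy Poisson/Schouten algebras of functions provided the corresponding Hamiltonians are $\Phi$-related, i.e.\ satisfy a Hamilton--Jacobi type equation $H_1\bigl(x,\der{S}{x}\bigr)=H_2\bigl(\der{S}{q},q\bigr)$ (with $S$ the generating function of the thick morphism, and the obvious odd analog). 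So the proof reduces to a single verification: the generating function $S^*$ of $\Phi^*$ (and $S^{*\Pi}$ of $\Phi^{*\Pi}$) relates the master Hamiltonians $H_1$ and $H_2$ of the two $L_\infty$-algebroids.

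The heart of the argument — and the step I expect to be the main obstacle — is this last verification. The point is that $\Phi$ being an $L_\infty$-morphism means $R_\Phi\subset T^*(\Pi E_2)\times(-T^*(\Pi E_1))$, the canonical relation of $\Phi$, carries $H_1$ to $H_2$ in the sense that $\Phi^*H_2 = H_1$ along $R_\Phi$ (this is just the coordinate statement that $\Phi$ intertwines the homological vector fields, rewritten Hamiltonianly). Applying the Mackenzie--Xu transformation $\kir$, which is an antisymplectomorphism and — crucially — identifies the relevant Hamiltonians on $T^*(\Pi E_i)$ with the master Hamiltonians $H_i$ on $T^*E_i^*$ governing the Lie--Schouten brackets (and likewise in the odd case for the Lie--Poisson brackets on $\Pi E_i^*$), one transports this relatedness to $(\kir\times\kir)(R_\Phi)$; passing to the opposite relation, which is exactly how $\Phi^*$ was defined in the proof of Theorem~\ref{thm.adjoint}, yields that the generating function $S^*$ satisfies the required Hamilton--Jacobi equation relating $H_1$ and $H_2$. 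The only genuinely delicate bookkeeping is checking that $\kir$ sends the "structure Hamiltonian" of the $L_\infty$-algebroid to the "bracket-generating Hamiltonian" on the dual side with the correct signs and parities — in the even (Schouten) case on $E^*$ and in the odd (Poisson) case on $\Pi E^*$ — but this is precisely the content of the Mackenzie--Xu / odd-Mackenzie--Xu identifications as recorded above, so it is a matter of careful application rather than a new idea. Once the relatedness of Hamiltonians is established, the $L_\infty$-morphism property of $\Phi_*$ and $\Phi_*^\Pi$ follows immediately from the cited result of~\cite{tv:nonlinearpullback}, completing the proof.
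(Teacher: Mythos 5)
Your overall strategy is the paper's: pass to the homological-vector-field picture, use that $Q$-relatedness of $Q_1,Q_2$ is the same as $R_\Phi$-relatedness of the corresponding linear Hamiltonians, transport this by the Mackenzie--Xu (anti)symplectomorphisms and flip factors so that the master Hamiltonians on the dual side become related by the adjoint thick morphism of Theorem~\ref{thm.adjoint}, and then invoke the theorem of~\cite{tv:nonlinearpullback} that relatedness of master Hamiltonians by a thick morphism makes the (pull/push)forward an $L_\infty$-morphism. Also, the ``delicate verification'' you single out is essentially definitional in the paper: the bracket-generating Hamiltonian on the dual side is \emph{defined} as the Mackenzie--Xu image of $Q_E\cdot p$, so nothing extra needs to be checked there.

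There is, however, a concrete parity/duality error in your bookkeeping. The $L_\infty$-morphism is a (generally nonlinear) fiberwise map $\Phi\co \Pi E_1\to \Pi E_2$; there is no map $E_1\to E_2$ at all. So Theorem~\ref{thm.adjoint} must be applied to the bundles $\Pi E_i$: the even adjoint is a thick morphism $\Pi E_2^*\tto \Pi E_1^*$, and it is this one that serves the homotopy \emph{Schouten} brackets on the antiduals, whose odd master Hamiltonians live on $T^*(\Pi E_i^*)$ (even Mackenzie--Xu images of $Q_i\cdot p$); the odd antiadjoint is $E_2^*\oto E_1^*$, serving the homotopy \emph{Poisson} brackets on the duals, with even master Hamiltonians on $\Pi T^*(E_i^*)$. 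You instead write the adjoint as $E_2^*\tto E_1^*$ and the antiadjoint as $\Pi E_2^*\oto \Pi E_1^*$, and you attach the Schouten structure to $E^*$ and the Poisson structure to $\Pi E^*$ --- both assignments are swapped relative to what the construction actually produces (an $L_\infty$-algebroid induces homotopy Poisson brackets on $E^*$ and homotopy Schouten brackets on $\Pi E^*$). This is not merely notational: an even thick morphism pulls back only even functions and can only be matched against odd Hamiltonians on the ordinary cotangent bundles, so with your pairing the hypotheses of the cited result of~\cite{tv:nonlinearpullback} would not even be meaningful. Once these assignments are corrected (even adjoint between the antiduals for the Schouten case, odd antiadjoint between the duals for the Poisson case), your argument coincides with the paper's proof.
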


Before proving the theorem, we recall some definitions and statements.

Recall that an \emph{$L_{\infty}$-algebroid} (see, e.g.,~\cite{tv:higherpoisson}) is a \new{(super)} vector bundle $E\to M$ endowed with a sequence of $n$-ary \emph{brackets} that defines an $L_{\infty}$-algebra structure on sections and a sequence of $n$-ary \emph{anchors} $a\co E\times_M\ldots \times_M E\to TM$ (multilinear bundle maps) so that the brackets satisfy  the Leibniz identities with respect to the multiplication of sections by functions on the base,
\begin{equation}
 [u_1,\ldots,u_{n-1},fu_n]=a(u_1,\ldots,u_{n-1})(f)\,u_n + (-1)^{(\ut_1+\ldots +\ut_{n-1}+n)\ft}f\,[u_1,\ldots,u_n]\,.
\end{equation}
Here we   follow the convention of Lada and Stasheff for $L_{\infty}$-algebras~\cite{lada:stasheff} that the brackets are antisymmetric and of alternating parities. So that the unary bracket is odd, the binary bracket is even, etc. (Under the alternative convention,  all brackets are symmetric and odd; its equivalence with the antisymmetric convention is by the parity reversion, see   discussion in~\cite{tv:higherder}. In the sequel, we will need to use both versions.) With this convention, ordinary Lie algebroids are a particular case of $L_{\infty}$-algebroids. An $L_{\infty}$-algebroid structure on  $E\to M$ is equivalent to a formal  homological vector field on the supermanifold $\Pi E$. An  \emph{$L_{\infty}$-morphism} of $L_{\infty}$-algebroids $\Phi\co E_1 \infto E_2$ is   specified by a fiberwise map (in general, nonlinear) $\Phi\co \Pi E_1\to \Pi E_2$ such that the corresponding homological vector fields  are $\Phi$-related.\footnote{\,Note that here there is \textbf{no} single map of manifolds from $E_1$ to $E_2$; hence the nonstandard arrow $E_1 \infto E_2$ denoting a morphism.}   With some abuse of language, it is convenient to call the map $\Pi E_1\to \Pi E_2$ itself an $L_{\infty}$-morphism.  This definition includes as particular cases   $L_{\infty}$-morphisms of $L_{\infty}$-algebras and    morphisms of Lie algebroids. Note that what we call $L_{\infty}$-algebras  are often called `curved' $L_{\infty}$-algebras. By default we include a  $0$-ary operation.

An $L_{\infty}$-algebroid structure on  $E\to M$ induces a homotopy Poisson structure on the supermanifold $E^*$ and a homotopy Schouten structure on the supermanifold $\Pi E^*$. That means that there are given sequences of brackets turning the space $\fun(E^*)$ into an $L_{\infty}$-algebra in the Lada--Stasheff sense (``the antisymmetric convention'')  and $\fun(\Pi E^*)$ into an $L_{\infty}$-algebra in the sense of the alternative (``symmetric'') convention. Each bracket must also be a derivation in each argument. We shall refer to these brackets  as to the \emph{homotopy Lie--Poisson} and \emph{homotopy Lie--Schouten} brackets. These  structures on $E^*$ and $\Pi E^*$, as well as the homological vector field on $\Pi E$, are all equivalent to each other and should be seen as   different manifestations of one  structure of an $L_{\infty}$-algebroid, as  in the familiar cases of Lie algebras  and Lie algebroids~\cite{tv:graded,tv:qman-mack}.

\begin{proof}[Proof of Theorem~\ref{thm.linf}]
Consider a  {$L_{\infty}$-algebroid}   $E\to M$.    We shall give the proof for   the homotopy  Lie--Schouten brackets on  $\Pi E^*$. (The case of  the homotopy Lie--Poisson brackets on $ E^*$  is   similar.) Let $Q_E$ be the homological vector field on $\Pi E$ specifying the  algebroid structure in $E$. The homotopy Lie--Schouten brackets of functions on $\Pi E^*$  are  the  higher derived brackets generated by the odd  master Hamiltonian  $H^*=H^*_E$, i.e., an odd function on the cotangent bundle $T^*(\Pi E^*)$ satisfying   $(H^*,H^*)=0$ for the canonical Poisson bracket, which  is obtained  from the  fiberwise linear  Hamiltonian $H_E=Q_E\cdot p$ on $T^*(\Pi E)$ by the Mackenzie--Xu diffeomorphism $T^*(\Pi E)\cong T^*(\Pi E^*)$.
Suppose there is an $L_{\infty}$-morphism of $L_{\infty}$-algebroids $E_1\infto E_2$, i.e., a map $\Phi\co \Pi E_1\to \Pi E_2$ over $M$ such that the vector fields $Q_1$  and $Q_2$   are $\Phi$-related. This is equivalent to the Hamiltonians $H_1=H_{E_1}$ and $H_2=H_{E_2}$ being $R_{\Phi}$-related~\cite[Sec. 2, Ex.~6]{tv:nonlinearpullback}. By applying the Mackenzie--Xu transformations and flipping the factors, we conclude that the Hamiltonians $H_2^*=H_{E_2}^*$ and $H_1^*=H_{E_1}^*$ are $\Phi^*$-related, where $\Phi^*\co \Pi E_2^*\tto \Pi E_1^*$ is the adjoint thick morphism. By a key statement from~\cite{tv:nonlinearpullback} (Corollary from Theorems~6~and~7),  if the master Hamiltonians are  related by a thick morphism, then the   pullback  is an $L_{\infty}$-morphism of the homotopy Schouten algebras of functions. Hence the    pushforward map $\Phi_*=(\Phi^*)^*\co \funn(\Pi E^*_1)\to \funn(\Pi E^*_2)$  is an $L_{\infty}$-morphism, as claimed.
\end{proof}

With suitable modifications, the statement should hold for base-changing morphisms.

The following lemma should be known. It extends the corresponding property of ordinary Lie algebroids~\cite{mackenzie:book2005}. We give a proof for completeness (compare with the   statement for higher Lie algebroids~\cite{tv:napl,tv:qman}).

\begin{lemma}
 For an $L_{\infty}$-algebroid $E\to M$,  the higher anchors  assemble  to an $L_{\infty}$-morphism
\begin{equation*}
    a\co E\infto TM\,,
 \end{equation*}
to which we also refer as anchor (and use the same notation),  where $TM$ has the standard Lie algebroid structure.
\end{lemma}
\begin{proof}
The sequence of $n$-ary anchors assembles into a single map  $a\co \Pi E\to \Pi TM$, which is given by $a=\Pi Tp\circ Q$, where $Q=Q_E$ and
$\Pi Tp\co \Pi T(\Pi E)\to \Pi TM$ is the differential of the bundle projection  $p\co \Pi E\to M$. For an arbitrary $Q$-manifold $N$, the map $Q\co N\to \Pi TN$ is tautologically a $Q$-morphism, i.e., the vector fields $Q$ on $N$  and $d$ on $\Pi TN$   are $Q$-related. Also, for any map, its differential is a $Q$-morphism of the antitangent bundles. Hence the  map $a\co \Pi E\to \Pi TM$ is a $Q$-morphism as the composition of $Q$-morphisms. Therefore it gives an $L_{\infty}$-morphism $E\infto TM$ (which we denote by the same letter).
\end{proof}

\begin{corollary}
\label{coro1}
The anchor for an every $L_{\infty}$-algebroid $E\to M$ induces an  $L_{\infty}$-morphism
 \begin{equation}\label{eq.anchschout}
  a_*\co \funn(\Pi E^*)\to \funn(\Pi T^*M)
 \end{equation}
for the homotopy  Lie--Schouten brackets,
and an  $L_{\infty}$-morphism
\begin{equation} \label{eq.anchpoiss}
  a_*\co\pfunn(E^*)\to \pfunn(T^*M)
 \end{equation}
for the homotopy  Lie--Poisson brackets. (The functions on the bundles $\Pi T^*M$ and $T^*M$ are considered with the canonical Schouten and Poisson brackets, respectively.)
\end{corollary}

Note that at the right-hand sides of~\eqref{eq.anchschout} and~\eqref{eq.anchpoiss} there is only a binary bracket  while at the left-hand sides there are in general infinitely many brackets with all numbers of arguments. Therefore, for a general $L_{\infty}$-algebroid $E\to M$, these $L_{\infty}$-morphisms must be nontrivial, i.e., expressed by supermanifold maps that are  substantially  nonlinear.

\begin{corollary}
\label{coro2}
On a homotopy Poisson manifold $M$, there is an $L_{\infty}$-morphism
 \begin{equation}
  \funn(\Pi TM)\to \funn(\Pi T^*M)\,,
 \end{equation}
where functions on $\Pi TM$ 
are considered with the  higher Koszul brackets introduced in~\cite{tv:higherpoisson}.
\end{corollary}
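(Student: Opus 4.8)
The plan is to derive this as a special case of Corollary \ref{coro1} applied to a particular $L_\infty$-algebroid. Recall from \cite{tv:higherpoisson} that a homotopy Poisson structure on a manifold $M$ is the same data as a homotopy Lie algebroid structure on the cotangent bundle $T^*M$: the higher Poisson brackets on $\fun(M)$ are precisely the higher derived brackets of the master Hamiltonian on $T^*(\Pi T^*M)$, and via the de Rham picture this structure is encoded by a homological vector field on $\Pi T^*M$ whose associated brackets on sections of $T^*M\to M$ (i.e., on $1$-forms) are the higher Koszul brackets. So the starting point is: the higher Koszul brackets on $\funn(\Pi TM)$ are exactly the homotopy Lie--Schouten brackets of the $L_\infty$-algebroid $E=T^*M$, since $\Pi E=\Pi T^*M$ carries the relevant homological vector field and $\Pi E^* = \Pi T M$. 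Here one must be slightly careful with the identification $(T^*M)^* \cong TM$ and the parity shifts, but this is the standard correspondence.

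First I would invoke the Lemma: the anchor of the $L_\infty$-algebroid $E=T^*M$ assembles into an $L_\infty$-morphism $a\co T^*M\to TM$ of $L_\infty$-algebroids, where $TM$ has its canonical Lie algebroid structure. (For an ordinary Poisson manifold this anchor is just the sharp map $\sharp\co T^*M\to TM$ induced by the Poisson bivector; in the homotopy case the higher brackets supply higher anchors, and the Lemma guarantees they fit together.) Second I would apply Corollary \ref{coro1} to this $L_\infty$-algebroid $E=T^*M$: its first assertion gives an $L_\infty$-morphism
\begin{equation*}
 a_*\co \funn(\Pi E^*)\to \funn(\Pi T^*M)
\end{equation*}
for the homotopy Schouten brackets. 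Third, I would substitute the identification $\Pi E^* = \Pi(T^*M)^* \cong \Pi TM$, so that $\funn(\Pi E^*) = \funn(\Pi TM)$ carries the higher Koszul brackets by the first paragraph, while the target $\funn(\Pi T^*M)$ carries the canonical Schouten bracket. This yields exactly the claimed $L_\infty$-morphism $\funn(\Pi TM)\to \funn(\Pi T^*M)$.

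The main obstacle is bookkeeping rather than conceptual: one has to verify that the homotopy Schouten structure on $\funn(\Pi E^*)$ produced by Corollary \ref{coro1}, when $E=T^*M$, genuinely coincides with the higher Koszul bracket structure of \cite{tv:higherpoisson} under the canonical isomorphism $\Pi(T^*M)^*\cong \Pi TM$ — i.e., that the master Hamiltonian $H_{E^*}$ obtained from the homological vector field on $\Pi E=\Pi T^*M$ via the Mackenzie--Xu transformation $T^*(\Pi E)\cong T^*(\Pi E^*)$ is precisely the one whose derived brackets are the higher Koszul brackets. This is a matching of two constructions both built from the same homotopy Poisson data, so I would check it by comparing generating Hamiltonians on $T^*(\Pi TM)$ in coordinates, using the explicit form of the higher Poisson master Hamiltonian and the explicit Mackenzie--Xu formulas \eqref{eq.kirodd}. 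Once this identification is in place, the corollary follows formally from Corollary \ref{coro1} and the Lemma with no further work.
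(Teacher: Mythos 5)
Your proposal is correct and is exactly the route the paper intends: Corollary~\ref{coro2} is the special case of Corollary~\ref{coro1} for the $L_\infty$-algebroid $E=T^*M$ defined by the homotopy Poisson structure (homological vector field on $\Pi E=\Pi T^*M$), with $\Pi E^*\cong \Pi TM$ and the induced homotopy Schouten brackets on $\funn(\Pi TM)$ identified with the higher Koszul brackets of~\cite{tv:higherpoisson}. The identification of these brackets, which you flag as the remaining bookkeeping, is precisely how the higher Koszul brackets are constructed in that reference, so the argument closes as you describe.
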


To appreciate the meaning of Corollary~\ref{coro2}, recall that for an ordinary Poisson structure on a (super)manifold $M$, there is a linear transformation from forms to multivector fields, $\O^k(M)\to \Mult^k(M)$, preserving degrees  and parities, basically ``raising indices'' with the help of the Poisson tensor, which intertwines the de Rham differential on forms and the Poisson--Lichnerowicz differential on multivector fields, as well as the Koszul bracket on forms and the Schouten bracket on  multivector fields. Recall that the Poisson--Lichnerowicz differential $d_P$ can be defined by  $d_P=\lsch P, -\rsch$, the Schouten bracket with the Poisson tensor. The Koszul bracket induced by a Poisson structure can be defined on $1$-forms by formulas such as $[df,dg]_P=d\{f,g\}_P$ and $[df,g]_P=\{f,g\}_P$, where $\{f,g\}_P$ is a given Poisson bracket, and then extended to all forms as a biderivation. It is best to see this as a Lie algebroid structure induced on $T^*M$ (see~\cite{mackenzie:book2005}). For the homotopy case, the picture will be as follows~\cite{tv:higherpoisson}. A single binary Koszul bracket is replaced by an infinite sequence of `higher Koszul brackets' on $\O(M)$ making $T^*M$ an $L_{\infty}$-algebroid. It is still possible to define a linear transformation from forms to multivectors (no longer preserving degrees),   such that the diagram
\begin{equation*}
    \begin{CD} \Mult(M)@>{d_P}>> \Mult (M)\\
                @AAA         @AAA\\
                \O(M)@>{d}>> \O(M) \,,
    \end{CD}
\end{equation*}
is   commutative,
where the analog of the Poisson--Lichnerowicz differential $d_P=\lsch P, -\rsch$  is  an odd operator  (but   not of a particular degree). However, there is a problem with the brackets. Unlike the classical case, this linear map  $\O(M)\to \Mult(M)$ (and no linear map)  clearly cannot transform a sequence of  many higher Koszul brackets into one Schouten bracket. We conjectured in~\cite{tv:higherpoisson} that an $L_{\infty}$-morphism from $\O(M)$ to $\Mult(M)$    must exist  instead. Corollary~\ref{coro2} gives the desired solution. The linear map from forms to multivectors constructed in~\cite{tv:higherpoisson} is induced by a fiberwise (nonlinear) map $\Pi T^*M\to \Pi TM$, which represents the anchor $T^*M\infto TM$.  The dual to it   is a thick morphism $\Pi T^*M\tto \Pi TM$,   the nonlinear  pullback by which is   exactly the sought-for $L_{\infty}$-morphism. See~\cite{tv:linfbialg} for details.

\begin{corollary}[generalization of Corollary~\ref{coro2}]
\label{coro3}
 There is an $L_{\infty}$-morphism of homotopy Schouten algebras
 \begin{equation}
  \funn(\Pi E)\to \funn(\Pi E^*)\,
 \end{equation}
 for `triangular $L_{\infty}$-bialgebroids'.
\end{corollary}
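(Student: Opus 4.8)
The plan is to reduce the statement for triangular $L_{\infty}$-bialgebroids to the already-proved Theorem~\ref{thm.linf} together with Corollary~\ref{coro2}. Recall that a triangular $L_{\infty}$-bialgebroid structure on $E\to M$ is, by definition, an $L_{\infty}$-algebroid structure on $E$ together with a Maurer--Cartan element, i.e. an even section $P$ of $S(\Pi E^*)$ (a ``homotopy Poisson bivector'' in the algebroid setting) satisfying $[P,P]=0$ for the higher Schouten brackets on $\funn(\Pi E^*)$ induced by the algebroid. Such a $P$ determines, in the usual Koszul fashion, a sequence of higher Koszul brackets on $\funn(\Pi E)$, the generators of which are obtained by deriving with $P$; these are precisely the ``higher Koszul brackets'' of~\cite{tv:higherpoisson} when $E=TM$, so the present statement is literally the generalization of Corollary~\ref{coro2} advertised in its heading.

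First I would set up the Hamiltonian picture on cotangent bundles, as in the proof of Theorem~\ref{thm.linf}. The algebroid structure on $E$ is encoded by the homological vector field $Q_E$ on $\Pi E$, equivalently by the ``linear'' master Hamiltonian $H_E=Q_E\cdot p$ on $T^*(\Pi E)$, and, via the Mackenzie--Xu transformation $T^*(\Pi E)\cong T^*(\Pi E^*)$, by the master Hamiltonian $H_{E^*}$ on $T^*(\Pi E^*)$ generating the homotopy Schouten brackets on $\funn(\Pi E^*)$. The Maurer--Cartan element $P$, being a function on $\Pi E^*$, can be lifted to $T^*(\Pi E^*)$ and, using the inverse Mackenzie--Xu transformation, transported to a function on $T^*(\Pi E)$; the condition $[P,P]=0$ for the Schouten brackets is equivalent to the statement that $H_E^{P}:=H_{E^*}+(\text{Hamiltonian lift of }P)$ still squares to zero, so that $H_E$ and this twisted Hamiltonian both satisfy the classical master equation. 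This is the standard ``twisting by a Poisson bivector'' argument, carried out in the $L_{\infty}$ setting: the key point is simply that a solution of the master equation, composed with a canonical relation expressing the twist, gives a new solution.

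Next I would exhibit the relevant thick morphism. Following Corollary~\ref{coro2} (and Theorem~4 and its corollary in~\cite{tv:nonlinearpullback}, which I may assume), the bivector $P$ defines a thick morphism $\Phi_P\co \Pi E\tto \Pi E^*$ whose generating function encodes $P$, and the content of that corollary is that the two master Hamiltonians --- the one generating the higher Koszul brackets on $\funn(\Pi E)$ and the one generating the higher Schouten brackets on $\funn(\Pi E^*)$ --- are $\Phi_P$-related. Once that relatedness is established, the pullback $\Phi_P^*\co \funn(\Pi E^*)\to \funn(\Pi E)$ is an $L_{\infty}$-morphism for these brackets by the cited corollary; composing, or rather reading the pushforward $(\Phi_P)_* = (\Phi_P^*)^*$ in the appropriate direction, yields the asserted $L_{\infty}$-morphism $\funn(\Pi E)\to \funn(\Pi E^*)$. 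The verification that the higher Koszul brackets on $\funn(\Pi E)$ coincide with the derived brackets one reads off from $H_E^{P}$ is the technical heart: one must match the combinatorial normalization of~\cite{tv:higherpoisson} with the Hamiltonian generating function, exactly as in the $E=TM$ case of Corollary~\ref{coro2}.

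The main obstacle I expect is this last bookkeeping step --- namely, checking that twisting $H_{E}$ by the Maurer--Cartan bivector $P$ produces, after Mackenzie--Xu, precisely the generator of the higher Koszul brackets on the $L_{\infty}$-algebroid side, with all signs and parities correct in the Lada--Stasheff versus symmetric conventions. The $L_{\infty}$-morphism statement itself is then essentially automatic from Theorem~\ref{thm.linf}/Corollary~\ref{coro2}: one only needs that $P$-relatedness of master Hamiltonians is preserved under the canonical identifications, which is the same functoriality argument (composition of canonical relations commutes with the Mackenzie--Xu transformation, and taking opposite relations reverses the order) used in the proof of Theorem~\ref{thm.adjoint}. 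Thus the proof is short modulo the identification of the Koszul-bracket generator, and I would present it as: twist by $P$; observe the twisted Hamiltonians are related by the thick morphism $\Phi_P$; invoke the corollary of~\cite[Thm.~4]{tv:nonlinearpullback} to conclude that the pushforward is an $L_{\infty}$-morphism.
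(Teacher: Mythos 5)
You should first note that the paper itself gives no proof of this corollary: it is stated and immediately deferred (``We hope to elaborate the latter statement elsewhere''), the intended route being the one that yields Corollary~\ref{coro2}. Namely, the Maurer--Cartan element $P\in\funn(\Pi E^*)$ should induce an $L_\infty$-algebroid structure on $E^*$ (whose homotopy Schouten brackets on $\funn(\Pi E)$ are the higher Koszul-type brackets) together with a nonlinear fiberwise map $\Pi E^*\to \Pi E$ assembling the ``sharp'' maps of $P$; once that map is shown to be an $L_\infty$-morphism of $L_\infty$-algebroids (the analogue of the Lemma on anchors), Theorem~\ref{thm.linf} applied to $E^*\to E$ gives the stated $L_\infty$-morphism $\funn(\Pi(E^*)^*)=\funn(\Pi E)\to\funn(\Pi E^*)$. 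Your strategy --- a thick morphism with generating function built from $P$ relating the two master Hamiltonians, followed by the corollary of~\cite[Thm.~4]{tv:nonlinearpullback} --- is a viable and in fact more direct alternative, but as written it has concrete defects.

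First, the direction and the formalism are off. A thick morphism $\Phi_P\co \Pi E\tto \Pi E^*$ has pullback $\Phi_P^*\co\funn(\Pi E^*)\to\funn(\Pi E)$, the wrong way for the corollary, and the expression $(\Phi_P^*)^*$ you invoke to ``read it in the appropriate direction'' is undefined: in the paper the outer star in $\Phi_*=(\Phi^*)^*$ (Corollary~\ref{cor.pushforward}, Theorem~\ref{thm.adjoint}) is the pullback of the \emph{adjoint thick morphism of an ordinary fiberwise map}; one cannot dualize a nonlinear pullback. What is needed is a thick morphism $\Pi E^*\tto\Pi E$ (e.g.\ the adjoint of the fiberwise map $\Pi E^*\to\Pi E$ given roughly by $u^i=\der{P}{\eta_i}$), whose pullback then goes $\funn(\Pi E)\to\funn(\Pi E^*)$. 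Second, you misattribute the key step: the corollary of Theorem~4 of~\cite{tv:nonlinearpullback} has the relatedness of master Hamiltonians as its \emph{hypothesis}, not its content; verifying that relatedness for the specific generating function built from $P$ (equivalently, solving the Hamilton--Jacobi-type condition, equivalently proving that the $P$-sharp map is an $L_\infty$-morphism of $L_\infty$-algebroids, i.e.\ that the twisted Hamiltonian is precisely the generator of the higher Koszul brackets) is exactly the substance of the corollary, which both the paper and your proposal defer --- so nothing is actually established. Third, your twisting claim is wrong as stated: the Maurer--Cartan equation for $P$ is not equivalent to $H_{E^*}+(\text{lift of }P)$ squaring to zero; $P$ is even while a master Hamiltonian must be odd, and the homotopy Maurer--Cartan equation is not linear in $P$ --- it involves all the higher brackets, i.e.\ the canonical transformation $e^{\ad_P}$ (the fiberwise shift by $dP$), not an additive shift of the Hamiltonian.
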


Recall that Mackenzie and Xu~\cite{mackenzie:bialg} introduced  the concept of a triangular Lie bialgebroid as a  generalization of Drinfeld's triangular Lie bialgebras.  It is a pair of vector bundles in duality $(E,E^*)$, where a Lie algebroid structure on $E$ is initially given and the bundle $E^*$ is made  a Lie algebroid with the help of an element $r\in \Gamma(M, \Lambda^2E)$ playing the role of the classical $r$-matrix. In our language, $r$ is a fiberwise quadratic function on $\Pi E^*$. The Lie algebroid structure on $E^*$ is  defined by the Hamiltonian $H_{E^*}:=(H^*_E,r)\in \fun(T^*(\Pi E^*))$, where $H^*_E$ is obtained by the Mackenzie--Xu transformation from the Hamiltonian   $H_E\in \fun(T^*(\Pi E))$ corresponding to the Lie algebroid structure on $E$. (Counting weights shows that the Hamiltonian  $H_{E^*}$ is linear in momenta on $\Pi E^*$ as required.) The pair $(TM,T^*M)$ for a Poisson manifold
is a model example of a triangular Lie bialgebroid.  The role of an `$r$-matrix' is played by the Poisson bivector. Transporting this analogy to  the  homotopy case, we can define an $L_{\infty}$ analog of the Mackenzie--Xu triangular Lie bialgebroids. For a pair $(E,E^*)$, one starts from an $L_{\infty}$-algebroid structure on $E$ and an even function $r$ on $\Pi E^*$ (no constraints on degrees), and then introduces a compatible `triangular'  structure, which will make the pair $(E,E^*)$ a \emph{triangular $L_{\infty}$-bialgebroid}.\footnote{ There is some freedom as to what should be called  an \emph{$L_{\infty}$-bialgebroid} structure on $(E,E^*)$ in general.  The options range from   $L_{\infty}$-algebroid structures on $E$ and $E^*$  with a compatibility condition expressible as $(H_1,H_2)=0$ for the corresponding   odd Hamiltonians which live  on $T^*(\Pi E) \cong T^*(\Pi E^*)$, which in particular gives a self-commuting Hamiltonian $H:=H_1+H_2$,  to the apparently more general structure described by a single self-commuting odd Hamiltonian $H$ of an  arbitrary form. In the latter option  the distinction between the bialgebroid and its `Drinfeld double' looks as    blurred. One should certainly wish to have  a pair of structures that can be combined into a family.} The key observation here is that the homotopy analog of a triangular structure is the shift in the argument of the master Hamiltonian, $H(x,p)\mapsto H'(x,p)=H(x, p+\der{r}{x})$. Corollary~\ref{coro3} in this setting arises as an abstract version of Corollary~\ref{coro2}. We
elaborate these questions elsewhere (see~\cite{tv:homotopytriang} and \cite{tv:linfbialg}).

\section{Quantum thick morphisms: general properties} \label{sec.quantum}

We shall show now  that the construction  of thick morphisms in the bosonic case  has a ``quantum'' counterpart. Namely, we shall define ``quantum pullbacks'' depending on Planck's constant $\hbar$   as certain oscillatory integral operators
that transform  functions on one (super)manifold  to functions on another (super)manifold. We then   define the ``quantum microformal category'' as the dual   to the   category of such integral operators. We shall show that in the limit $\hbar\to 0$ this picture gives rise to thick morphisms and the corresponding nonlinear pullbacks.  This in hindsight may be seen as clarifying the origin  of the above ``classical'' constructions. For quantum pullbacks it is possible to give a closed formula  as opposed to the pullbacks by   ``classical'' thick morphisms,  defined only by an iterative procedure.   Quantum thick morphisms were first introduced in the short note~\cite{tv:oscil}. Some further results were obtained in preprint~\cite{tv:qumicro}.

We first need to introduce a class of functions on which quantum pullbacks will be acting.

\begin{definition} An \emph{oscillatory wave function} on a (super)manifold $M$ is a linear combination of  formal expressions of the form
\begin{equation}\label{eq.oscfun}
    w_{\hbar}(x)=a(\hbar, x)e^{\frac{i}{\hbar}b(x,\hbar)}
\end{equation}
where $a(x,\hbar)=\sum_{n\geq 0}\hbar^n a_n(x)$ and $b(x,\hbar)=\sum_{n\geq 0}\hbar^n b_n(x)$ are formal  expansions in non-negative powers   of $\hbar$ whose coefficients are smooth functions on $M$. (Here $b(x,\hbar)$ is even.)
\end{definition}

We customarily drop  explicit  indication of depending on $\hbar$ for oscillatory wave functions and write $w(x)$ for $w_{\hbar}(x)$. We assume natural rules of manipulations with the expression like~\eqref{eq.oscfun}. Note   that
we can always re-arrange the exponential    in~\eqref{eq.oscfun}
so to make    $w(x)=A(\hbar, x)e^{\frac{i}{\hbar}b_0(x)}$, with     no dependence on $\hbar$ in the phase $b_0(x)$.  Conversely, an invertible factor in front of the exponential can be made into a term in the phase if we  forsake the  reality restriction.
Oscillatory wave functions on $M$ form  an algebra, which   we denote  $\ofun(M)$ and  which extends    the algebra
$\funh(M):=\fun(M)[[\hbar]]$ of  formal power series in $\hbar$ with smooth coefficients\,.  Symbolically,
\begin{equation*}
    \ofun(M)=\funh(M) \exp \frac{i}{\hbar}\fun(M)\,.
\end{equation*}

Consider supermanifolds $M_1$ and $M_2$. In the same way as  thick morphisms $M_1\tto M_2$ are specified by their generating functions,  quantum thick morphisms will be specified by certain ``quantum'' generating functions.
As in Section~\ref{sec.categories}, denote by $x^a$   local coordinates on $M_1$, by $y^i$   local coordinates on $M_2$, and by $p_a$ and $q_i$ denote the corresponding conjugate momenta.
In  given coordinate systems on $M_1$ and $M_2$, a   \emph{quantum generating function} $S_{\hbar}(x,q)$ is a formal power series in $q_i$\,,
\begin{equation}
    S_{\hbar}(x,q)=S_{\hbar}^0(x)+ \f^i_{\hbar}(x)q_i + \frac{1}{2}\,S^{ij}_{\hbar}(x)q_jq_i + \frac{1}{3!}\,S^{ijk}_{\hbar}(x)q_kq_jq_i + \ldots
\end{equation}
where the coefficients   are formal power series in $\hbar$. Note that, the same as for the ``classical'' case considered before, $S_{\hbar}(x,q)$ is a coordinate representation of a geometric object and   not a scalar function. Its transformation law will be clarified shortly.

\begin{definition} A  \emph{quantum   thick morphism} or   \emph{quantum   microformal morphism}
\begin{equation*}
    \hat\Phi\co M_1\ttoq  M_2
\end{equation*}
with a (quantum) generating function $S_{\hbar}(x,q)$ is identified with its action on functions
\begin{equation*}
    \hat\Phi^*\co \ofun(M_2)\to \ofun(M_1)
\end{equation*}
in the opposite direction, called \emph{quantum pullback} and  defined  by the formula
\begin{equation}\label{eq.phihat}
    (\hat\Phi^* w)(x)= \int_{T^*M_2} Dy\,\Dbar q \,\, e^{\frac{i}{\hbar}\left(S_{\hbar}(x,q)-y^iq_i\right)}\,w(y)\,.
\end{equation}
Integration in~\eqref{eq.phihat} is with respect to the normalized Liouville measure $Dy\,\Dbar q$ on $T^*M_2$. Here and in the future, we use  the notation $\Dbar q:=(2\pi\hbar)^{-n}(i\hbar)^{m}Dq$   if $Dq$ is a coordinate volume element  in  dimension   $n|m$.
\end{definition}

The source of the normalization factor above is in the formulas for the direct and inverse $\hbar$-Fourier transform. Recall that    on $\R{n|m}$ they read
\begin{equation*}
    \tilde f(p)=\int Dx\; e^{-\frac{i}{\hbar}x^ap_a}f(x)
\end{equation*}
and
\begin{equation*}
      f(x)=
     (2\pi\hbar)^{-n}(i\hbar)^{m} \int Dp\; e^{\frac{i}{\hbar}x^ap_a}\tilde f(p)=
     \int \Dbar p\; e^{\frac{i}{\hbar}x^ap_a}\tilde f(p)\,,
\end{equation*}
where the integration is over $\R{n|m}$   and over its dual. (There may be  an extra common sign factor depending on choices of signs in the Berezin integral, which we take to be   $1$.)  In particular,
\begin{equation*}
      \delta(x)= \int \Dbar p\; e^{\frac{i}{\hbar}x^ap_a}\,.
\end{equation*}

\begin{remark} The form of integral operators~\eqref{eq.phihat} is familiar in the theory of partial differential equations (not   the super case of course). Operators of a slightly more general form
\begin{equation}\label{eq.egor}
    Au(x)= \int e^{i\left(S(x,p)-x'p\right)}a(x,p) \,  u(x') \,dx'\dbar p \,,
\end{equation}
but with both  $x$ and $x'$ in the same   domain  $\Omega\subset \R{n}$,
were studied by M.~I.~Vishik and G.~I.~\`{E}skin~\cite{vishik:eskin-conv1965} and especially   by Yu.~V.~Egorov~\cite{egorov:canonicpdo1969,egorov:canonicpdo1971} and M.~V.~Fedoryuk~\cite{fedoryuk:pdo1971}.  Together with   Maslov's  canonical operator~\cite{maslov:pert1965}, they were precursors of the
Fourier integral operators   introduced by L.~H\"{o}rmander in~\cite{hoer:spectralofell-1968} and \cite{hoer:fio1-1971}. H\"{o}rmander~\cite{hoer:fio1-1971} stressed as   crucial observation of Egorov  a connection between  operators~\eqref{eq.egor} and canonical transformations in $T^*\Omega$. (As noticed in Fedoryuk~\cite{fedoryuk:pdo1971}, such a connection was  indicated earlier by~V.~A.~Fock~\cite{fock:canon1959-69}, who  was making a precise statement out of Dirac's analogy   between unitary transformations in quantum mechanics and canonical transformations in classical mechanics. See also~Fock~\cite[Ch.~III \S 16]{fock:book}.)
In H\"{o}rmander's construction of Fourier integral operators,  canonical transformations gave way to canonical relations,  specified by equivalence classes of   phase functions depending on auxiliary variables. In   standard theory, these canonical relations are conical, so the  phase functions are positively-homogeneous of degree $+1$~\cite{treves:introtofio}, \cite{egorov:microlocalencyclo}, \cite{shubin:book}. Operators~\eqref{eq.phihat} can be seen therefore as a special case of Fourier integral operators, but not exactly fitting in the standard definitions because of the different type of their phase functions.
\end{remark}

\begin{example} \label{ex.s0phiquant}
Let $S_{\hbar}(x,q)=S_{\hbar}^0(x)+\f_{\hbar}^i(x)q_i$. Then
\begin{equation}\label{eq.intpullback}
    (\hat\Phi^* w)(x)= e^{\frac{i}{\hbar}S_{\hbar}^0(x)}\int_{T^*M_2} \Dbar(y,q) \,\, e^{\frac{i}{\hbar}(\f_{\hbar}^i(x)-y^i)q_i}\,w(y)=
    e^{\frac{i}{\hbar}S_{\hbar}^0(x)}w(\f_{\hbar}(x))\,.
\end{equation}
We arrive at a ``quantum analog'' of the category $\SMan\rtimes \funn$ and  its action on smooth functions. Morphisms here   are pairs $(\f, e^{\frac{i}{\hbar}f})$ and the composition of pairs is given by $(\f_{32},e^{\frac{i}{\hbar}f_{32}})\circ (\f_{21},e^{\frac{i}{\hbar}f_{21}})=(\f_{32}\circ \f_{21}, e^{\frac{i}{\hbar}(\f_{21}^*f_{32}+f_{21})})$. The phase functions $f$ and maps $\f$ are expansions in nonnegative powers of    $\hbar$, $f =f_{\hbar}$  and  $\f=\f_{\hbar}$   (so the ``maps'' are formal perturbations of ordinary maps). The action~\eqref{eq.intpullback} is clearly well-defined for oscillatory wave functions $w$.
\end{example}

\begin{example}
Let $w(y)\equiv 1$. Then, for arbitrary $S_{\hbar}(x,q)$, we have
\begin{equation}\label{eq.qpullbofone}
    \hat\Phi^* (1) = e^{\frac{i}{\hbar}S_{\hbar}^0(x)}\,.
\end{equation}
\end{example}

\begin{example} \label{ex.test}
Let $w(y)=e^{\frac{i}{\hbar}y c}$, where $yc\equiv y^ic_i$ and $c_i$ are parameters. Then
\begin{equation}\label{eq.qpullataexp}
    (\hat\F^*w)(x)= e^{\frac{i}{\hbar}S_{\hbar}(x,c)}\,.
\end{equation}
Compare Example~\ref{ex.testfunction}\,.  We can restate this as a formula for reconstructing the quantum generating function:
\begin{equation}\label{eq.qugenfun}
    e^{\frac{i}{\hbar}S_{\hbar}(x,q)} = \hat\F^*\!\bigl[e^{\frac{i}{\hbar}y q}\bigr](x)
\end{equation}
(where we restored $q$ in the argument).
\end{example}

Let $S_0(x,q)$ be obtained by   substituting $\hbar=0$ in  a quantum generating function $S_{\hbar}(x,q)$. We regard $S_0(x,q)$ as the generating function of a classical thick morphism $\Phi\co M_1\tto M_2$. Before we have clarified the transformation law for quantum generating functions, this would make sense at least in a fixed coordinate system. We shall write $\F=\lim\limits_{\hbar\to 0}\hat\F$\,.

\begin{theorem}  \label{thm.quantstatph}
In the limit $\hbar\to 0$,  the quantum pullback $\hat\Phi^*$   transforms the phase  of an oscillatory wave function   as the pullback $\Phi^*$ by the classical thick morphism $\F=\lim\limits_{\hbar\to 0}\hat\F$, so that if $w(y)=e^{\frac{i}{\hbar}g(y)}$ on $M_2$, then
\begin{equation*}
    (\hat\Phi^*w)(x)= e^{\frac{i}{\hbar}f_{\hbar}(x)}\,,
\end{equation*}
on $M_1$, where
\begin{equation*}
    f_{\hbar}=\Phi^*[g]+O(\hbar)\,.
\end{equation*}
\end{theorem}
\begin{proof}
For a wave function $w(y)=e^{\frac{i}{\hbar}g(y)}$, we have
\begin{equation*}
    (\hat\Phi^* w)(x)= \int_{T^*M_2} \Dbar(y,q) \,\, e^{\frac{i}{\hbar}\left(S_{\hbar}(x,q)-y^iq_i +g(y)\right)}\,.
\end{equation*}
By the stationary phase method (see   Appendix~\ref{sec.append}), the value of the integral, in the main order in $\hbar$, is the  exponential evaluated at the critical points of the phase when $\hbar\to 0$. By differentiating with respect to $y^i$ and $q_i$ and setting the result to zero, we arrive at the system of equations
 \begin{equation*}
    q_i=\der{g}{y^i}(y)\,,\quad y^i=(-1)^{\itt}\der{S_0}{q_i}(x,q)
 \end{equation*}
for determining $y^i, q_i$, the unique solution of which should be substituted into $S_{0}(x,q)+g(y)-y^iq_i$ to obtain a function $f(x)$ as the leading term of the phase. These are exactly   equations~\eqref{eq.phistarq} and \eqref{eq.phistary} in the definition of pullback, and  $f=\Phi^*[g]$ as claimed.
\end{proof}

\begin{remark} The stationary phase method~\cite{fedoryuk:pdo1971} can be applied   to $\hat\Phi^*w$ for $w=a(x,\hbar)e^{\frac{i}{\hbar}g(x)}$   and it also allows to find all terms in the expansion in $\hbar$ (at least, their general form), not only the main term. The fact that quantum pullback preserves the class of oscillatory wave functions follows from here.
Note that the square root of the Hessian arising as a factor in the stationary phase method  can be formally subsumed into the phase   as a  correction of the first order in $\hbar$. Also note that since in the main order the quantum pullback reduces to the classical pullback, which is a formal map, so is the quantum pullback (formal on the phases). For convenience, we included the precise statements concerning the stationary phase method in the form suitable for our needs in the appendix. See Theorems~\ref{thm.varfedoryuk} and \ref{thm.ourstatphase} there.
\end{remark}

Integral~\eqref{eq.phihat} actually can be solved in a closed form, giving an expression  for a quantum pullback $\hat \Phi^*\co \ofun(M_2)\to \ofun(M_1)$ as a   ``formal differential operator''. (This is an advantage over  pullbacks by classical thick morphisms,  given in general   only by an iterative procedure.)   Let us write a quantum generating function $S_{\hbar}(x,q)$ defining a quantum microformal morphism $\hat\Phi\co M_1\ttoq M_2$ in the form similar to~\eqref{eq.sexpandgroup2},
\begin{equation}\label{eq.qgenfun2}
    S_{\hbar}(x,q)=S_{\hbar}^0(x)+\f^i_{\hbar}(x)q_i+S^{+}_{\hbar}(x,q)\,,
\end{equation}
where   $S^{+}_{\hbar}(x,q)$ is the sum of all  terms of order $\geq 2$ in $q_i$. 

\begin{theorem} \label{thm.quantactionexp}
The action of $\hat \Phi^*\co \ofun(M_2)\to \ofun(M_1)$
can be expressed as follows:
\begin{equation}\label{eq.phiw}
     \bigl(\hat \Phi^*w\bigr)(x)=e^{\frac{i}{\hbar}S_{\hbar}^0(x)}
    \left(e^{\frac{i}{\hbar}S^{+}_{\hbar}\left(x,\frac{\hbar}{i}\der{}{y}\right)}w(y)\right)_{\left|\vphantom{\int\limits_a^b}\ y^i=\f^i_{\hbar}(x)\right.}\,.
\end{equation}
It is  a formal differential operator  over a map 
$\f_{\hbar}\co M_1\to M_2$ given by  $y^i=\f^i_{\hbar}(x)$.
\end{theorem}
\begin{proof} Substituting~\eqref{eq.qgenfun2} into~\eqref{eq.phihat} gives
\begin{multline*}
    (\hat\Phi^* w)(x)=\int \Dbar(y,q) \,\, e^{\frac{i}{\hbar}(S_{\hbar}^0(x)+\f^i_{\hbar}(x)q_i+S^{+}_{\hbar}(x,q)-y^iq_i)}\,w(y)=\\
    e^{\frac{i}{\hbar}S_{\hbar}^0(x)}  \int  \Dbar q \,\,
    e^{\frac{i}{\hbar}\f^i_{\hbar}(x)q_i}
    e^{\frac{i}{\hbar}S^{+}_{\hbar}(x,q)}
    \int Dy \,\,
    e^{-\frac{i}{\hbar}y^iq_i}
    \,w(y)\,.
\end{multline*}
The integral  is the composition of the ($\hbar$-)Fourier transform of a function $w(y)$ from  the variables $y^i$ to the variables $q_i$, the multiplication by $e^{\frac{i}{\hbar}S^{+}_{\hbar}(x,q)}$, treated as a function of $q_i$  with $x^a$ seen as parameters, and the inverse  Fourier transform from $q_i$ to $y^i$, where $\f^i_{\hbar}(x)$ is substituted for $y^i$, followed finally by the multiplication by the phase factor $e^{\frac{i}{\hbar}S_{\hbar}^0(x)}$. Recalling the standard relation between multiplication and differentiation under   Fourier transform, we arrive  at the claimed result.
\end{proof}

\begin{remark} The notion of a differential operator over a smooth map  as opposed to operators on a single manifold is  not very standard, but should be self-explanatory. 
Separating   the ``differentiation part'' such as $S^{+}_{\hbar}\left(x,\frac{\hbar}{i}\der{}{y}\right)$ from the purely ``substitution part'' $y^i=\f^i_{\hbar}(x)$ in~\eqref{eq.phiw}  is of course coordinate-dependent. Naively, there are   three ingredients in $\hat\Phi^*$: a differential operator of   infinite order in $y^i$ and of the form $1+O(\hbar)$  in $\hbar$  (starting with the second derivatives and where each term with the derivatives of order $k$ is of order $k-1$ in $\hbar$), the substitution as such, and the multiplication by the phase factor. Thus, a general quantum thick morphism $\hat\Phi$ can be seen as a   perturbation due to the term $S^{+}_{\hbar}(x,q)$ in the expansion~\eqref{eq.qgenfun2} of the generating function, of a morphism of the form $(\f_{\hbar}, e^{\frac{i}{\hbar}f_{\hbar}})$ as in Example~\ref{ex.s0phiquant}.
\end{remark}

We can push this a bit further by noticing that the quantum pullback $\hat\F^*$ can be written as an integral operator
\begin{equation}\label{eq.qupullbviak}
    (\hat\F^*w)(x)=\int\!  Dy\, K(x,y)\, w(y)
\end{equation}
with the Schwarz kernel
\begin{equation}\label{eq.kernofhatphistar}
    K(x,y)=\int\! \Dbar q\, e^{\frac{i}{\hbar}\left(S_{\hbar}(x,q)-y^iq_i\right)}\,,
\end{equation}
i.e., the $\hbar$-Fourier transform (up to a factor) of the function $e^{\frac{i}{\hbar} S_{\hbar}(x,q)}$ from $q$ to $y$\,. By expanding $S_{\hbar}(x,q)$ as in~\eqref{eq.qgenfun2} and using manipulations similar to the proof of Theorem~\ref{thm.quantactionexp}, we can express the integral  kernel of the operator $\hat\F^*$ as
\begin{equation}\label{eq.kernelphi}
     K(x,y)= e^{\frac{i}{\hbar}S_{\hbar}^0(x)}
     e^{\frac{i}{\hbar}S^{+}_{\hbar}\left(x,- \frac{\hbar}{i}\der{}{y}\right)}\,\delta\bigl(y-\f_{\hbar}(x)\bigr)
\end{equation}
(note the minus sign in the argument of $S^{+}_{\hbar}$). This basically is a re-statement of Theorem~\ref{thm.quantactionexp}. In this form it is clear that the integral kernel of $\hat\F^*$  is supported on a formal neighborhood of the graph of the ``$\hbar$-perturbed'' map $\f_{\hbar}\co M_1\to M_2$\,.


\begin{theorem} \label{thm.qucomp}
The composition of quantum thick morphisms $ M_1\tttoq{\hat\Phi_{21}} M_2\tttoq{\hat\Phi_{32}} M_3$
with generating functions $S_{21}(x_1,p_2)$ and $S_{32}(x_2,p_3)$ is a quantum thick morphism $ M_1\tttoq{\hat\Phi_{31}}   M_3$
with the generating  function $S_{31}(x_1,p_3)$ given by
\begin{equation}\label{eq.qucomp}
    e^{\frac{i}{\hbar}S_{31}(x_1,p_3)}=\int_{T^*M_2} \Dbar (x_2, p_2)\, e^{\frac{i}{\hbar}\left(S_{32}(x_2,p_3)+ S_{21}(x_1,p_2)-x_2p_2\right)}\,.
\end{equation}
\emph{(Here $S_{21} : =S_{21,\hbar}$, etc.; we suppress $\hbar$ for the simplicity of notation.)}
In the limit $\hbar\to 0$, this composition law becomes the composition law for classical thick morphisms given by Theorem~\ref{thm.classcompos}.
\end{theorem}
\begin{proof}
Apply the composition $\hat\Phi_{21}^*\circ \hat\Phi_{32}^*$ to a `test function' $w(x_3)=e^{\frac{i}{\hbar}x_3p_3}$ (see Example~\ref{ex.test}).  The claim is that the result is an oscillatory exponential of the desired form.  We work in the abbreviated notation and denote coordinates on the manifolds  $M_i$ by $x_i$ and the conjugate momenta by $p_i$, where $i=1,2,3$. We have
\begin{multline*}
    \hat\F_{21}^*\left(\hat\F_{32}^*\bigl[e^{\frac{i}{\hbar}x_3p_3}\bigr]\right)(x_1)=
    \hat\F_{21}^*\bigl[ \hat\F_{32}^*\bigl[e^{\frac{i}{\hbar}x_3p_3}\bigr](x_2)\bigr](x_1)=\\
    \int Dx_2\,\Dbar p_2\; e^{\frac{i}{\hbar}\left(S_{21}(x_1,p_2)-x_2p_2\right)}
    \int Dx_3 \,\Dbar p_3'\; e^{\frac{i}{\hbar}\left(S_{32}(x_2,p_3')-x_3p_3'\right)} e^{\frac{i}{\hbar}x_3p_3}=\\
    \int Dx_2\,\Dbar p_2\, Dx_3 \,\Dbar p_3'\; e^{\frac{i}{\hbar}\left(S_{21}(x_1,p_2)+S_{32}(x_2,p_3')-x_2p_2+x_3(p_3-p_3')\right)}=  \\
    \int Dx_2\,\Dbar p_2\; e^{\frac{i}{\hbar}\left(S_{21}(x_1,p_2)+S_{32}(x_2,p_3)-x_2p_2\right)}\,.
\end{multline*}
From the stationary phase method (see  Theorem~\ref{thm.ourstatphase} of the appendix) we observe, first, that the latter integral can be written as an exponential $e^{\frac{i}{\hbar}S_{31}(x_1,p_3)}$ for some   function
$S_{31}$ depending on $\hbar$; and, secondly, that in the limit $\hbar\to 0$, which is indicated by $0$ in the subscripts, we should have
\begin{equation*}
    S_{31,0}(x_1,p_3) = S_{21,0}(x_1,p_2)+S_{32,0}(x_2,p_3)-x_2p_2
\end{equation*}
where the   variables $x_2$ and $p_2$ are found from the equations
\begin{equation*}
    x_2^i=(-1)^{\itt}\der{S_{21,0}}{p_{2\,i}}(x_1,p_2)\,, \quad p_{2\,i}=\der{S_{32,0}}{x_2^i}(x_2,p_3)\,.
\end{equation*}
This is exactly the composition law for classical generating functions  as given by Theorem~\ref{thm.classcompos}\,.
\end{proof}

\begin{theorem}[transformation law for quantum generating functions] \label{thm.qutransf}
Let $x^a=x^a(x')$, $y^i=y^i(y')$ and  $x^{a'}=x^{a'}(x)$, $y^{i'}=y^{i'}(y)$ be mutually inverse changes of local coordinates on $M_1\times M_2$. Then quantum generating functions $S_{\hbar}(x,q)$ and $S'_{\hbar}(x',q')$ specifying the same quantum thick morphism $\hat\F\co M_1\ttoq M_2$ in coordinate systems $x,y$ and $x',y'$ are related by the transformation law
\begin{equation}\label{eq.translawqu}
    e^{\frac{i}{\hbar}S'_{\hbar}(x',q')}=\int Dy \,\Dbar q \, e^{\frac{i}{\hbar}\bigl(S_{\hbar}\left(x(x'),q\right)-yq+y'(y)q'\bigr)}\,,
\end{equation}
where we use abbreviated notation such as $yq\equiv y^iq_i$\,.
\end{theorem}
\begin{proof} Similarly to the proof of Theorem~\ref{thm.qucomp},   apply $\hat \F^*$, for a quantum thick morphism $\hat \F$ specified by   $S_{\hbar}(y,q)$ in the `old' coordinates $x^a,y^i$, to a  test function  $w=e^{\frac{i}{\hbar}y^{i'}q_{i'}}$, where $y^{i'}$ are the `new' coordinates on $M_2$ and $q_{i'}$ are the conjugate momenta, expressing the result also via the `new' coordinates $x^{a'}$ on $M_1$. We obtain
\begin{equation*}
    \hat \F^* [e^{\frac{i}{\hbar}y'q'}]= \int Dy\,\Dbar q\; e^{\frac{i}{\hbar}(S(x,q)-yq)}\, e^{\frac{i}{\hbar}y'(y)q'}=
    \int Dy\,\Dbar q\; e^{\frac{i}{\hbar}\left(S(x,q)-yq+y'(y)q'\right)} \,,
\end{equation*}
where it remains to substitute $x=x(x')$. The integral is of the type covered by Theorem~\ref{thm.ourstatphase} in the appendix and we may conclude that it equals to an oscillating exponential of the form $e^{\frac{i}{\hbar}S'_{\hbar}(x',q')}$, which therefore gives the quantum generating function of the morphism $\hat \F$ in the `new' coordinates on $M_1$ and $M_2$ expressed by~\eqref{eq.translawqu}, as claimed.
\end{proof}

(This included the independence of the notion of a quantum thick morphism of a choice of coordinates.)

If we apply the stationary phase method to the integral in the right-hand side of~\eqref{eq.translawqu}, we will arrive at the equations
\begin{equation*}
    y^i=(-1)^{\itt}\der{S_{\hbar}}{q_i}(x(x'), q)\,, \quad q_i=\der{y^{i'}}{y^i}(y)q_{i'}
\end{equation*}
for determining $y^i$ and $q_i$ (as functions of $x'$ and $q'$) at the stationary point\,. Then
\begin{equation*}
    S'_{\hbar}(x',q')= S_{\hbar}\left(x(x'),q\right)-yq+y'(y)q' + O(\hbar)\,.
\end{equation*}
Hence in the limit $\hbar\to 0$, the transformation law for quantum generating functions $S_{\hbar}$ becomes, as anticipated, the transformation law~\eqref{eq.news} for classical generating functions $S$, $S=S_0$, considered before.

\begin{remark} For quantum thick morphisms, there are two different kinds of power expansions: the   expansion  in Planck's constant $\hbar$ and the expansions already present for  classical thick morphisms (formal power expansions   for the pullbacks and   compositions), which can be compared with expansions ``in the coupling constant''. The source of latter are the higher order terms in  momenta in generating functions, which in particular result  in    coupled equations for determining the stationary phase points. See also   Appendix~\ref{sec.append}.
\end{remark}


\section{Quantum thick morphisms: application to homotopy algebras} \label{sec.quantumhomot}

Now we turn to application of quantum microformal morphisms to homotopy bracket structures. Since the initial motivation for introducing ``classical'' microformal morphisms was the search for a construction of $L_{\infty}$-morphisms for homotopy Poisson or Schouten brackets, it is natural to ask about the respective position of the quantum version.

For the ``quantum'' context we need to recall how a  bracket structure is generated by a   differential operator. Let $A$ be a commutative associative  superalgebra with unit. Suppose $\D$ is a linear operator acting on $A$. One can say when $\D$ is a differential operator (d.~o.) of order (less or equal to) $n$. This is defined by induction: $\D$ is of order $0$ if it commutes with multiplication by elements of $A$ and of order   $n$ if for all $a\in A$ the commutator $[\D,a]$ is of order $n-1$. (By using Hadamard's lemma, one can see that for a smooth manifold this leads to the usual definition with partial derivatives.) Such an understanding can be traced back to A.~Grothendieck~\cite[Ch.\,IV\,\S\,16.8]{groth:ega4}. J.-L.~Koszul~\cite{koszul:crochet85}   extracted from it a construction of a sequence of multilinear operations (later generalized by F.~Akman from commutative to other algebras, see.e.g.~\cite{akman:genBV}), which we shall call `brackets',\footnote{Hopefully, no confusion with the Koszul brackets  on differential forms  considered in Section~\ref{sec.algebroid}.} and which are defined as follows: for an arbitrary linear operator $\D$ on an algebra $A$ and for elements  $a_1,\ldots,a_k\in A$, where $k\geq 0$,   set
\begin{equation}\label{eq.koszbrack}
    \{a_1,\ldots,a_k\}_{\D}:=[\ldots [\D,a_1],\ldots,a_k](1)\,.
\end{equation}
For $k=0,1,2,3$ one can find
\begin{align*}
    \{\varnothing\}_{\D}&=\D(1)\,,\\
    \{a\}_{\D}&= \D(a)-\D(1)a\,,\\
    \{a,b\}_{\D}&=\D(ab)-\D(a)b-(-1)^{\at\bt}\D(b)a +\D(1)ab\,,\\
    \{a,b,c\}_{\D}&= \D(abc)-\D(ab)c-(-1)^{\bt\ct}\D(ac)  b-(-1)^{\at(\bt+\ct)}\D(bc) a \\
    & \quad \quad \quad  +\D(a)  bc+(-1)^{\at\bt}\D(b)ac+(-1)^{(\at+\bt)\ct}\D(c) ab-\D(1)abc\,,
\end{align*}
and  an expression  of this form can be written for arbitrary $k$, see below.
Koszul's construction is an example of  `higher derived brackets'~\cite{tv:higherder}.
The  brackets are symmetric  in the supersense  and have parity equal to the parity of $\D$. For any $k$, they satisfy the identity
\begin{multline}\label{eq.notleibn}
    \{a_1,\ldots,a_{k-1},a_ka_{k+1}\}_{\D} =  \{a_1,\ldots,a_{k-1},a_k\}_{\D}a_{k+1}+(-1)^{\alpha_k}  a_k\{a_1,\ldots,a_{k-1},a_{k+1}\}_{\D}\\
     +\{a_1,\ldots,a_{k-1},a_k,a_{k+1}\}_{\D}\,,
\end{multline}
where $\alpha_k =\at_k(\Dt+\at_1+\ldots+\at_{k-1})$,
which means
that
the $(k+1)^{\text{st}}$   bracket measures the failure of the $k^{\text{th}}$  bracket to be a  derivation in its arguments. If $\D$ is a differential operator of order $n$, then all brackets with more than $n$ arguments vanish, the top bracket is a multiderivation and in the formula for it there is no need to evaluate at $1$,
\begin{equation*}
    \{a_1,\ldots,a_n\}_{\D}=[\ldots[\D,a_1],\ldots,a_n]\,.
\end{equation*}
The top bracket can be identified with the \emph{principal symbol} of a differential operator. We refer to the operator $\D$ as the \emph{generating operator} of the sequence of brackets $\{-,\ldots,-\}_{\D}$.
\begin{remark} For arbitrary $k$, the expression for the $k^{\text{th}}$ bracket generated by $\D$ is
\begin{equation}\label{eq.koszbrackexpl}
    \{a_1,\ldots,a_k\}_{\D}=\sum_{s=0}^k(-1)^s\!\!\!\!\!\!\!\sum_{\text{$(k-s,s)$-shuffles}}\!\!\!\!\!\!\! \new{(-1)^{\a}}\,\D(a_{\tau(1)}\ldots { a_{\tau(k-s)})}\,a_{\tau(k-s+1)}\ldots a_{\tau(k)}\,,
\end{equation}
where \new{$(-1)^{\a}= (-1)^{\a(\tau;\at_1,\ldots,\at_k)}$}  is the standard `Koszul sign'   for  permutation of commuting factors of given parities.  (If all elements $a_1,\ldots, a_k$ are even, then $(-1)^{\a(\tau;\at_1,\ldots,\at_k)}=1$.)
\end{remark}

If $\D$ is odd, the brackets are also odd and one may ask about their Jacobiators. As shown in~\cite{tv:higherder}, the sequence of the Jacobiators is generated by the operator $\D^2=\frac{1}{2}[\D,\D]$. In particular, if $\D^2=0$, all the Jacobiators vanish and the brackets generated by $\D$ make $A$ an $L_{\infty}$-algebra (in the symmetric version).\footnote{This was first found in physics literature related with the Batalin--Vilkovisky formalism, see e.g.~\cite{bering:higher}.}

Note however that we do not obtain an $S_{\infty}$-algebra  (or `homotopy Schouten' algebra) in this way because   the Leibniz identity is not satisfied. Following~\cite{tv:higherder}, we can modify Koszul's construction to resolve this problem. Consider $A_{\hbar}:=A[[\hbar]]$. Define \emph{$\hbar$-differential operators} ($\hbar$-d.~o.'s)  as follows. Let $\D$ be a linear operator on $A_{\hbar}$. $\D$ is an $\hbar$-d.~o. of order $0$ if it commutes with the multiplication by all $a\in A_{\hbar}$. $\D$ is an $\hbar$-d.~o. of order $n$ if for all $a\in A_{\hbar}$, the operator $[\D,a]=(-i\hbar)\D'_a$, where $\D'_a$ is an $\hbar$-d.~o. of order $n-1$. For example, if $\D'$ is a d.~o. of order $n$ in the usual sense, then the operator $\D=(-i\hbar)^n\D'$ is an $\hbar$-d.~o. of order $n$.

\begin{example} \label{ex.hdo}
On a (super)manifold $M$, an arbitrary  $\hbar$-differential operator of order $n$ has the form
\begin{equation}
    \D=      (-i\hbar)^n  A^{a_1\ldots a_n}_{\hbar}(x)\,\p_{a_1}\dots\p_{a_n} +  (-i\hbar)^{n-1} A^{a_1\ldots a_{n-1}}_{\hbar}(x)\,\p_{a_1}\dots\p_{a_{n-1}}+\ldots + A^0_{\hbar}(x)\,.
\end{equation}
\end{example}

We note that the algebra of oscillatory wave functions introduced above is stable under $\hbar$-differential operators. (It is not stable under arbitrary differential operators because they may create the factors of $\hbar^{-1}$.)

For an  $\hbar$-d.~o. $\D$ of arbitrary order, all $k$-fold commutators $[\ldots [\D,a_1],\ldots,a_k]$ are divisible by $(-i\hbar)^k$, and we can (re)define the brackets generated by $\D$ by setting:
\begin{equation}\label{eq.hbrack}
    \{a_1,\ldots,a_k\}_{\D,\hbar}:=(-i\hbar)^{-k}[\ldots [\D,a_1],\ldots,a_k](1)\,.
\end{equation}
We can also introduce the corresponding ``classical'' brackets by
\begin{equation}\label{eq.classbrack}
    \{a_1,\ldots,a_k\}_{\D,0}:=\lim_{\hbar\to 0}\;(-i\hbar)^{-k}[\ldots [\D,a_1],\ldots,a_k](1)
\end{equation}
(the limit has the meaning of substituting $0$ in the nonnegative power expansion in $\hbar$). We   refer to~\eqref{eq.hbrack} as the ``quantum'' brackets as opposed to the ``classical'' brackets~\eqref{eq.classbrack}.
The quantum brackets satisfy the identity
\begin{multline}\label{eq.hleibn}
    \{a_1,\ldots,a_{k-1},a_ka_{k+1}\}_{\D,\hbar} =\{a_1,\ldots,a_{k-1},a_k\}_{\D,\hbar}\,a_{k+1}+\new{(-1)^{\att} a_k\{a_1,\ldots,a_{k-1},a_{k+1}\}_{\D,\hbar}}\\
     +(-i\hbar)\{a_1,\ldots,a_{k-1},a_k,a_{k+1}\}_{\D,\hbar}\,,
\end{multline}
which for $\hbar\to 0$ becomes the derivation property
\begin{equation}\label{eq.classleibn}
    \{a_1,\ldots,a_{k-1},a_ka_{k+1}\}_{\D,0} =\{a_1,\ldots,a_{k-1},a_k\}_{\D,0}\,a_{k+1}+(-1)^{\att}   a_k\{a_1,\ldots,a_{k-1},a_{k+1}\}_{\D,0}\,.
\end{equation}
Here   $\att=  {\at_k(\Dt+\at_1+\ldots+\at_{k-1})}$.
We call the sequence of all classical brackets generated by $\D$, the \emph{principal symbol} of an $\hbar$-differential operator $\D$. On a (super)manifold, since the classical brackets are symmetric multiderivations of the algebra of functions, the   principal symbol can be identified with an inhomogeneous polynomial in momentum variables.
(In the language of Section~\ref{sec.algebroid}, it is the  `master Hamiltonian' of the   brackets.)

\begin{example} For an $\hbar$-differential operator of Example~\ref{ex.hdo}, the principal symbol is
\begin{equation}
    H(x,p)=
        A^{a_1\ldots a_n}_{0}(x)\,p_{a_1}\dots p_{a_n} +    A^{a_1\ldots a_{n-1}}_{0}(x)\,p_{a_1}\dots p_{a_{n-1}}+\ldots + A^0_{0}(x)\,.
\end{equation}
which is an inhomogeneous  fiberwise-polynomial function on $T^*M$, well-defined independently of a choice of coordinates! (Subscript $0$ means substituting $0$ for $\hbar$ in the coefficients.)
\end{example}

\begin{remark} If we only keep the condition that all $k$-fold commutators $[\ldots [\D,a_1],\ldots,a_k]$ be divisible by $(-i\hbar)^k$, formula~\eqref{eq.hbrack} still makes sense and we obtain, generally, an infinite sequence  of brackets. We shall refer to such operators as \emph{formal $\hbar$-differential operators}. On manifolds, this gives operators whose principal symbols are formal power series in momenta. Algebraic constructions  here agree with the  known notion of  $\hbar$-pseudodifferential operators   defined in local coordinates by integrals
\begin{equation*}
    (\D u)(x) = \iint \Dbar p Dx'  e^{\frac{i}{\hbar}(x^a-{x'}^{a})p_a} H_{\hbar}(x,p) u(x')\,,
\end{equation*}
with a function $H_{\hbar}(x,p)$  from a suitable symbol class (see e.g.~\cite{shubin:book}).     
Here the  ``full symbol''  $H_{\hbar}(x,p)$   is coordinate-dependent, but the principal symbol $H(x,p)=H_{0}(x,p)$ is   well defined as a function on $T^*M$. 
\end{remark}

Suppose an odd operator $\D$ squares to $0$. Consider   the quantum brackets~\eqref{eq.hbrack}. They define  an $L_{\infty}$-algebra (in the odd symmetric version) and additionally satisfy the modified Leibniz identity~\eqref{eq.hleibn}.  We shall call such an algebraic structure an   \emph{$S_{\infty,\hbar}$-algebra}.   (So that for $\hbar=0$ we come back to an $S_{\infty}$-algebra, $S_{\infty,0}=S_{\infty}$.)  We shall give a formula for the corresponding homological vector field, as well as a formula for the master Hamiltonian for the classical $S_{\infty}$-algebra (i.e. the principal symbol of $\D$).

\begin{lemma}\label{lem.qa}
The quantum  brackets~\eqref{eq.hbrack} correspond to a formal vector field $Q$ on an algebra $A$ (more accurately,  on the corresponding  supermanifold $\mathbf{A}$), where
\begin{equation}\label{eq.qforhbrack}
    Q= e^{-\frac{i}{\hbar}a}\D\bigl(e^{\frac{i}{\hbar}a}\bigr)\,\var{}{a}\,.
\end{equation}
Here $a\in A$ and $\D(e^{\frac{i}{\hbar}a})$ denotes the application of the operator  to the function.
\end{lemma}
\begin{proof} The formal vector field corresponding to a sequence of symmetric multilinear functions of fixed parity on a superspace $A$ is the formal sum
\begin{equation*}
    Q(a)=\sum_{k=0}^{+\infty} \frac{1}{k!}\underbrace{\{a,\ldots,a\}}_{k}\,,
\end{equation*}
see e.g.~\cite{tv:higherder}. Here $a\in A$ is a `running' even element (or a point of the corresponding supermanifold). A vector field here is identified with a vector-function. It can be expressed also as
\begin{equation*}
    Q=\sum_{k=0}^{+\infty} \frac{1}{k!}\underbrace{\{a,\ldots,a\}}_{k}\,\var{}{a}\,,
\end{equation*}
meaning an infinitesimal shift $a\mapsto a +\e Q(a)$\,. The relation of the vector field $Q$ with the given multilinear functions is by the higher derived bracket formula~\cite{tv:higherder}
\begin{equation*}
    \{a_1,\ldots,a_k\}=[\ldots[Q,a_1],\ldots,a_k](0)
\end{equation*}
(the value of a vector field at the origin). Here vectors $a_i$ are regarded as constant vector fields. Now, to obtain~\eqref{eq.qforhbrack}, we take an even element $a$ in the algebra $A$ and consider
\begin{equation*}
     \underbrace{\{a,\ldots,a\}}_{k}{}_{\D,\hbar} =
    \bigl[\ldots \bigl[\D,\frac{i}{\hbar} a\bigr],\ldots,\frac{i}{\hbar} a\bigr](1)=
    \left(\left(\ad\bigl(-\frac{i}{\hbar}  a\bigr)\right)^k \D\right)(1)\,,
\end{equation*}
hence
\begin{multline*}
    Q(a)=\sum_{k=0}^{+\infty} \frac{1}{k!}\left(\left(\ad\bigl(-\frac{i}{\hbar}  a\bigr)\right)^k \D\right)(1)=
   \left( e^{\ad\bigl(-\frac{i}{\hbar}  a\bigr)}\D\right)(1)=
   \left(\Ad(e^{-\frac{i}{\hbar}  a})\D\right)(1)=\\
   \left(e^{-\frac{i}{\hbar}  a} \D e^{\frac{i}{\hbar}  a} \right)(1)=
   e^{-\frac{i}{\hbar}  a} \D \bigl(e^{\frac{i}{\hbar}  a} (1)\bigr)=
   e^{-\frac{i}{\hbar}  a} \D \bigl(e^{\frac{i}{\hbar}  a}\bigr)\,.
\end{multline*}
\end{proof}

\begin{lemma} In the differential-geometric setting, the principal symbol of $\D$  or the master Hamiltonian of the classical brackets~\eqref{eq.classbrack}   is given by
\begin{equation}\label{eq.hforclassbrack}
    H(x,p)=\lim_{\hbar\to 0} e^{-\frac{i}{\hbar}x^ap_a}\D(e^{\frac{i}{\hbar}x^ap_a})
\end{equation}
\end{lemma}
\begin{proof} Recall that the master Hamiltonian $H$ of symmetric brackets is defined by the relation~\cite{tv:higherder}
\begin{equation*}
    \{f_1,\ldots,f_k\}=(\ldots (H,f_1),\ldots,f_k){\left|\vphantom{\sum}\right.}_{M}\,,
\end{equation*}
for functions $f_i\in\fun(M)$\,. Hence, in local coordinates,
\begin{equation*}
    H(x,p)= \sum_{k=0}^{+\infty} \frac{1}{k!} \{x^{a_1}p_{a_1},\ldots,x^{a_k}p_{a_k}\}\,,
\end{equation*}
where the momentum variables $p_a$ are treated as parameters when the brackets are taken; therefore, for the classical brackets generated by an operator $\D$, we obtain
\begin{equation*}
    H(x,p)=\lim_{\hbar\to 0}\sum_{k=0}^{+\infty}  \frac{1}{k!}\bigl[\ldots \bigl[\D, \frac{i}{\hbar}x^{a_1}p_{a_1}\bigr],\ldots,\frac{i}{\hbar}x^{a_k}p_{a_k}\bigr](1)=
    \lim_{\hbar\to 0}e^{-\frac{i}{\hbar} x^{a}p_{a}} \D \bigl(e^{\frac{i}{\hbar}  x^{a}p_{a}}\bigr)
\end{equation*}
(where we have effectively repeated the argument used in the proof of Lemma~\ref{lem.qa}).
\end{proof}

\begin{remark} For neither formula~\eqref{eq.qforhbrack} nor~\eqref{eq.hforclassbrack}  it is important that the operator $\D$ generating the brackets is odd or satisfies $\D^2=0$. In particular, it makes sense to consider ``$L_{\infty}$-morphisms'' of the infinite sequences of brackets generated by arbitrary  operators $\D$ without such assumptions. It is interesting what such morphisms would mean in the classical context of partial differential equations.
\end{remark}

Now we shall give a ``quantum analog'' of Theorem~6 of~\cite{tv:nonlinearpullback} that says that Poisson thick morphisms induce $L_{\infty}$-morphisms of homotopy Poisson brackets.

\begin{definition}
We say that $M$ is a \emph{Batalin-Vilkovisky manifold} or shortly a \emph{BV-manifold} if $M$ is a supermanifold equipped with an odd formal $\hbar$-differential operator  $\D$   of square zero. The operator $\D$ is referred to as the \emph{BV-operator}. If $M_1$ and $M_2$ are  BV-manifolds with the BV-operators $\D_1$ and $\D_2$, we    say that a quantum thick morphism $\hat\F\co M_1\ttoq M_2$ is a \emph{quantum BV-morphism}  if
\begin{equation}\label{eq.qbvmor}
    \D_1\circ \hat \F^* = \hat\F^*\circ \D_2\,.
\end{equation}
\end{definition}

The BV-operator on a BV-manifold  $M$ specifies an $S_{\infty,\hbar}$-structure  on the algebra  $\funh(M_1)$.
We shall show that a quantum BV-morphism  $\hat\F\co M_1\ttoq M_2$  induces an $L_{\infty}$-morphism of the corresponding $S_{\infty,\hbar}$-algebras. Note that it cannot be the pullback $\hat \F^*$ itself, since $\hat \F^*$ is linear and we are looking for a nonlinear map of the  function supermanifolds
\begin{equation*}
    \funnh(M_2)\to \funnh(M_1)\,.
\end{equation*}
For a quantum thick morphism $\hat \F$ (not necessarily BV), define $\hat \F^!$ by
\begin{equation}\label{eq.phishriek}
    \hat \F^!(g):=\frac{\hbar}{i}\,\ln\left(\hat\Phi^* e^{\frac{i}{\hbar}g}\right)\,,
\end{equation}
for a $g\in \funnh(M_2)$\,. If we denote $\exph g :=\exp (\frac{i}{\hbar}g)$, $\lnh  f := \frac{\hbar}{i}\,\ln f$, then
\begin{equation}\label{eq.phishriek2}
    \hat \F^! =\lnh\circ \, \hat\Phi^*\!\circ \exph\,.
\end{equation}
For the composition of quantum thick morphisms,
\begin{equation*}
  M_1\tttoq{\hat\Phi_{21}} M_2\tttoq{\hat\Phi_{32}} M_3\,,
 \end{equation*}
we have
\begin{equation*}
    (\hat \F_{32}\circ \hat \F_{21})^!=\hat \F_{21}^!\circ \hat \F_{32}^!\,.
\end{equation*}
\begin{theorem} \label{thm.qbvaslinf}
If $\hat\F\co M_1\ttoq M_2$ is a quantum BV-morphism, then $\hat \F^!$ is an $L_{\infty}$-morphism of the  $S_{\infty,\hbar}$-algebras of functions. In greater detail, the map
\begin{equation*}
    \hat \F^!\co \funnh(M_2)\to \funnh(M_1)
\end{equation*}
is a morphism of $Q$-manifolds, where the homological vector fields $Q_i\in\Vect\left(\funnh(M_i)\right)$    corresponding to the BV-operators $\D_i$, $i=1,2$,  are given by Lemma~\ref{lem.qa}.
\end{theorem}
\begin{proof} By Lemma~\ref{lem.qa}, the homological vector fields $Q_i$ regarded as infinitesimal shifts on the supermanifold $\funnh(M_i)$ are given by
\begin{equation*}
    Q_i(f)=e^{-\frac{i}{\hbar}f}\D_i\bigl(e^{\frac{i}{\hbar}f}\bigr)\,,
\end{equation*}
so that $f\mapsto f+\e Q_i(f)$\,. We need to show that $\hat \F^!$ commutes with these shifts. Indeed, let $g\in \funnh(M_2)$;  apply the infinitesimal shift by $Q_2$ followed by $\hat \F^!$. We obtain
\begin{multline*}
    \hat \F^!(g+\e Q_2(g))= \hat \F^!(g+\e e^{-\frac{i}{\hbar}g}\D_2(e^{\frac{i}{\hbar}g}))=
    \frac{\hbar}{i}\ln \hat \F^* \exp\left(\frac{i}{\hbar}\left(g+\e e^{-\frac{i}{\hbar}g}\D_2(e^{\frac{i}{\hbar}g})\right)\right)=\\
    \frac{\hbar}{i}\ln \hat \F^* \left(e^{\frac{i}{\hbar}g}\Bigl(1+ \e \,\frac{i}{\hbar}\, e^{-\frac{i}{\hbar}g}\D_2(e^{\frac{i}{\hbar}g})\Bigr)\right)=
    \frac{\hbar}{i}\ln \hat \F^* \left(e^{\frac{i}{\hbar}g}+ \e \,\frac{i}{\hbar}\,\D_2(e^{\frac{i}{\hbar}g})\right)=\\
    \frac{\hbar}{i}\ln  \left(\hat \F^*e^{\frac{i}{\hbar}g}+ \e \,\frac{i}{\hbar}\,\hat \F^*\bigl(\D_2(e^{\frac{i}{\hbar}g})\bigr)\right)=
    \frac{\hbar}{i}\ln  \left(\hat \F^*e^{\frac{i}{\hbar}g}+ \e \,\frac{i}{\hbar}\,\D_1\bigl(\hat \F^* e^{\frac{i}{\hbar}g}\bigr)\right)=\\
    \frac{\hbar}{i}\ln \hat \F^*e^{\frac{i}{\hbar}g} + \e   (\hat\F^*e^{\frac{i}{\hbar}g})^{-1} \D_1\bigl(\hat \F^* e^{\frac{i}{\hbar}g}\bigr)=
    \hat \F^! (g)+ \e   (\hat\F^*e^{\frac{i}{\hbar}g})^{-1} \D_1\bigl(\hat \F^* e^{\frac{i}{\hbar}g}\bigr)\,.
 \end{multline*}
Here we used the commutativity condition~\eqref{eq.qbvmor}. Now apply first $\hat \F^!$, then the infinitesimal shift by $Q_1$. We have
\begin{equation*}
    \hat \F^!(g) +\e Q_1\bigl(\hat \F^!(g)\bigr)=  \hat \F^!(g) +\e  e^{-\frac{i}{\hbar}\hat \F^!(g)}\D_1(e^{\frac{i}{\hbar}\hat \F^!(g)})\,;
\end{equation*}
note that
\begin{equation*}
    e^{\frac{i}{\hbar}\hat \F^!(g)}= \hat \F^* e^{\frac{i}{\hbar}g}\,.
\end{equation*}
Hence
\begin{equation*}
    \hat \F^!(g) +\e Q_1\bigl(\hat \F^!(g)\bigr)=  \hat \F^!(g) +\e  (\hat \F^* e^{\frac{i}{\hbar}g})^{-1}\D_1(\hat \F^* e^{\frac{i}{\hbar}g})\,,
\end{equation*}
which is exactly as above. Thus, $\hat \F^!$ intertwines $Q_1$ and $Q_2$ as claimed.
\end{proof}

\begin{remark}
The definition of the map $\hat\F^{!}$ by formulas \eqref{eq.phishriek}, \eqref{eq.phishriek2} is motivated by the stationary phase method (\emph{before} the limit $\hbar\to 0$ is taken). By Theorem~\ref{thm.quantstatph}, we have  $\lim\limits_{\hbar\to 0}\hat \F^{!}=\F^*$, where $\F$ is the classical thick morphism corresponding to a quantum thick morphism $\hat\F$\,. On the other hand, this construction is entirely algebraic and makes sense, together with an analog of Theorem~\ref{thm.qbvaslinf}, in the abstract setting as follows.
\end{remark}

Call an odd formal $\hbar$-differential operator $\D$ on a commutative unital superalgebra $A$ satisfying $\D^2=0$, a \emph{BV-operator}. Call an algebra $A$ endowed with such a $\D$, a \emph{BV-algebra}. (This is not standard terminology, but convenient for our present purpose.) Every BV-operator generates an infinite sequence of brackets by~\eqref{eq.hbrack}, which defines an $S_{\infty,\hbar}$-structure on the algebra  $A$. In fact, an  $S_{\infty,\hbar}$-structure is completely determined by its $0$- and $1$-brackets, as all the higher brackets are inductively obtained as the discrepancies in the Leibniz identities. Since we can   recover $\D$ as $\D(a)=\frac{\hbar}{i}\{a\}_{\D,\hbar}+\{\varnothing\}_{\D,\hbar}a$ and the Jacobi identities will give $\D^2=0$, the notions of a BV-algebra and $S_{\infty,\hbar}$-algebra coincide. Note also that since the parameter $\hbar$ plays a formal role here, we can set $-i\hbar\equiv 1$; then being a `formal $\hbar$-differential operator' becomes empty condition and the brackets~\eqref{eq.hbrack} turn back to the original operations introduced by Koszul. Let $A_1$ and $A_2$ be BV-algebras and let $\Phi\co A_1\to A_2$ be an even linear transformation  such that $\F\circ \D_1=\D_2\circ \F$. ($\F$ is not assumed to be a homomorphism with respect to the associative multiplication.) Call such a $\F$, a \emph{BV-morphism}. Define
\begin{equation*}
    \F_{!}:=\lnh \circ \,\F \circ \exph\,.
\end{equation*}
(There is an obvious functoriality relation $(\F_{1}\circ\F_{2})_{!}= {\F_{1}}_{!} \circ {\F_{2}}_{!}$\,. If $\F$ is a homomorphism, then $\F_{!}=\F$.)

\begin{theorem}
If $\F$ is a BV-morphism, then $\F_{!}$ is an $L_{\infty}$-morphism of the $S_{\infty,\hbar}$-structures.
\end{theorem}
\begin{proof}The proof of Theorem~\ref{thm.qbvaslinf} applies verbatim.
\end{proof}

\begin{corollary}[from Theorem~\ref{thm.qbvaslinf}] If $\hat\F\co M_1\ttoq M_2$ is a BV-morphism, the classical pullback $\F^*=\lim\limits_{\hbar\to 0}\hat\F^{!}$ is an $L_{\infty}$-morphism of the classical $S_{\infty}$-structures.
\end{corollary}
\begin{proof} Indeed, $\hat\F^{!}$ is an $L_{\infty}$-morphism of the   $S_{\infty,\hbar}$-structures. Passing to the limit $\hbar\to 0$ gives the claim.
\end{proof}

In~\cite{tv:nonlinearpullback}, we showed, for $S_{\infty}$-manifolds, that if a thick morphism $\F\co M_1\tto M_2$ is Poisson, i.e., the master Hamiltonians on $M_1$ and $M_2$ are $\F$-related, then the pullback $\F^*$ is an $L_{\infty}$-morphism of the homotopy Schouten brackets. We shall now relate  this with Theorem~\ref{thm.qbvaslinf}.

\begin{theorem} \label{thm.bvpoisson}
Let $M_1$ and $M_2$  be BV-manifolds and let $\hat\F\co M_1\ttoq M_2$ be a quantum BV-morphism. Then its classical limit $\F\co M_1\tto M_2$ is a Poisson morphism for the induced $S_{\infty}$-structures.
\end{theorem}
\begin{proof} Let $H_i\in\fun(T^*M_i)$, $i=1,2$, be the master Hamiltonians for the $S_{\infty}$-structures on $M_1$ and $M_2$ arising from the BV-operators $\D_1$ and $\D_2$. In other words, $H_1$ and $H_2$ are the principal symbols of    $\D_1$ and $\D_2$. We need to show that $H_1$ and $H_2$ are  $\F$-related, i.e., $\pi_1^*H_1=\pi_2^*H_2$ on the canonical relation $\F\subset M_2\times (-M_1)$~\cite{tv:nonlinearpullback}. This is a Hamilton--Jacobi equation
\begin{equation}\label{eq.poissonhj}
    H_1\Bigl(x,\der{S}{x}\,\Bigr)=H_2\Bigl(\!(-1)^{\qt}\der{S}{q},q\Bigr)\,,
\end{equation}
where $S(x,q)$ is the generating function of $\F$\,.
We are given that
\begin{equation} \label{eq.bv}
    \D_1\circ \hat \F^* = \hat\F^*\circ \D_2\,.
\end{equation}
In order to deduce~\eqref{eq.poissonhj} from~\eqref{eq.bv}, write  $\D_1$ and $\D_2$ as integral operators:
\begin{equation*}
    (\D_1u)(x)=\int Dx'\Dbar p'\,e^{\frac{i}{\hbar}(x-x')p'} H_{1,\hbar}(x,p')\,u(x')
\end{equation*}
and
\begin{equation*}
    (\D_2w)(y)=\int Dy'\Dbar q'\,e^{\frac{i}{\hbar}(y-y')q'} H_{2,\hbar}(y,q')\,w(y')\,.
\end{equation*}
Here $H_{1,\hbar}$ and $H_{2,\hbar}$ are   full symbols, which are coordinate-dependent objects. When $\hbar\to 0$, we get from them   the principal symbols $H_1=H_{1,0}$ and $H_2=H_{2,0}$, which    we need, and they are well-defined functions on $T^*M_1$ and $T^*M_2$. We have
\begin{equation*}
    (\D_1\hat\F^*w)(x)=\int Dx'\Dbar p'\,e^{\frac{i}{\hbar}(x-x')p'} H_{1,\hbar}(x,p')
    \int Dy \Dbar q \,e^{\frac{i}{\hbar}\left(S_{\hbar}(x',q)-yq\right)}w(y)
\end{equation*}
and
\begin{equation*}
    (\hat\F^*\D_2w)(x)=\int Dy \Dbar q \,e^{\frac{i}{\hbar}\left(S_{\hbar}(x,q)-yq\right)}
    \int Dy'\Dbar q'\,e^{\frac{i}{\hbar}(y-y')q'} H_{2,\hbar}(y,q')\,w(y')\,,
\end{equation*}
where $S_{\hbar}(x,q)$ is the quantum generating function for $\hat \F$. Take $w=e^{\frac{i}{\hbar}yc}$ as a `test function' as in Example~\ref{ex.testfunction} and obtain, respectively,
\begin{equation*}
    (\D_1\hat\F^*w)(x)=\int\! Dx'\Dbar p' Dy\, \Dbar q\;
    e^{\frac{i}{\hbar}\left(S_{\hbar}(x',q) + (x-x')p' + y(c-q)\right)}\,
    H_{1,\hbar}(x,p')
\end{equation*}
and
\begin{equation*}
    (\hat\F^*\D_2w)(x)=\int\! Dy\, \Dbar q \,Dy'\Dbar q'\,
    e^{\frac{i}{\hbar}\left(S_{\hbar}(x,q) + y(q'-q)   +y'(c-q')\right)}\,
    H_{2,\hbar}(y,q')\,.
\end{equation*}
In each case, the integral is simplified by the integration giving a delta-function and the subsequent integration with the delta-function. This gives finally
\begin{equation*}
    (\D_1\hat\F^*w)(x)=\int\! Dx'\Dbar p'\;
    e^{\frac{i}{\hbar}\left(S_{\hbar}(x',c) + (x-x')p'\right)}\,
    H_{1,\hbar}(x,p')
\end{equation*}
and
\begin{equation*}
    (\hat\F^*\D_2w)(x)=\int\! Dy\, \Dbar q \,
    e^{\frac{i}{\hbar}\left(S_{\hbar}(x,q) + y(c-q)\right)}\,
    H_{2,\hbar}(y,c)\,.
\end{equation*}
Now we apply the stationary phase method.
The stationary points for the phases
are specified, respectively, by the equations
\begin{equation*}
    x^a-{x'}^a=0\,, \quad \der{S_{\hbar}}{x^a}(x',c)-p'_a=0
\end{equation*}
and
\begin{equation*}
    c_i-q_i=0\,, \quad \der{S_{\hbar}}{q_i}(x,q)-(-1)^{\itt}y^i=0\,,
\end{equation*}
and both Hessians are equal to $1$.  Altogether
we obtain
\begin{equation*}
    (\D_1\hat\F^*w)(x)=
    e^{\frac{i}{\hbar} S_{\hbar}(x,c)}\,
    H_{1,\hbar}\Bigl(\!x,\der{S_{\hbar}}{x}(x,c)\!\Bigr)\,\bigl(1+ O(\hbar)\bigr)
\end{equation*}
and
\begin{equation*}
    (\hat\F^*\D_2w)(x)=
    e^{\frac{i}{\hbar} S_{\hbar}(x,c)}\,
    H_{2,\hbar}\Bigl(\!(-1)^{\qt}\der{S_{\hbar}}{q}(x,c),c\Bigr)
    \,\bigl(1+ O(\hbar)\bigr)
    \,.
\end{equation*}
The phase factors coincide; so by   eliminating them and setting  $\hbar\to 0$, we arrive at the equality
\begin{equation*}
    H_{1}\Bigl(\!x,\der{S_{0}}{x}(x,c)\!\Bigr)= H_{2}\Bigl(\!(-1)^{\qt}\der{S_{0}}{q}(x,c),c\Bigr)\,,
\end{equation*}
as desired because $S_0=S$ is the generating function of $\F$.
\end{proof}

Theorem~\ref{thm.bvpoisson} is similar with Egorov's fundamental theorem about canonical transformations of pseudodifferential operators~\cite{egorov:canonicpdo1969,egorov:canonicpdo1971}, see also Fedoryuk~\cite{fedoryuk:pdo1971}, which was one of the chief early sources for the theory of Fourier integral operators~\cite{hoer:fio1-1971}.)
More precisely, in Egorov's theorem, Fourier integral operators are constructed that intertwine pseudodifferential operators whose principal symbols are related by a canonical transformation.  The statement of our Theorem~\ref{thm.bvpoisson} is analogous to the inverse Egorov theorem. An analog of the direct Egorov theorem would be the following statement that should also be true:  every   $S_{\infty}$-structure, i.e.,  homotopy Schouten brackets for an arbitrary manifold, can be lifted to an  $S_{\infty,\hbar}$-structure or equivalently to a BV-operator $\D$, and every Poisson thick morphism between  $S_{\infty}$-manifolds can be lifted to a quantum  BV-morphism, which intertwines  $\D_1$ and $\D_2$.

\section*{Conclusions and discussion}

Let us summarize  what we have achieved so far.
We have introduced a new class of morphisms between smooth manifolds  (or supermanifolds).
They include  smooth maps, but are not themselves maps in the ordinary sense, i.e., not maps of sets.  In practice they are described  by their ``generating functions'' $S(x_1,p_2)$ depending as arguments on position variables on the source manifold and  momentum variables on the target manifold. Geometric objects underlying such morphisms (which we called ``thick'' or ``microformal'') are canonical relations between the cotangent bundles of the source and target, of a particular type maximally close to those induced by ordinary maps of the base manifolds. Namely, the relations that project without degeneration onto the source manifold and onto the fibers of the cotangent of the target; for the latter condition to make invariant sense, we are forced to consider our relations as formal. Hence the generating functions are formal power expansions in the cotangent directions. This explains the terminology `microformal morphisms' and `microformal geometry'. Since generating functions that differ by a constant define the same canonical relation and we actually need the functions themselves, not up to  constants,   we may think that we work with ``framed'' relations (meaning a  choice of additive constants).

The composition  of thick morphisms between (super)manifolds is of course the standard composition of relations; however, the statement is that the resulting relation is of the same type and that the  composition law is itself formal. The generating function of the composition of two thick morphisms is expressed as  a formal power expansion in their generating functions. This composition law is local (depends only on the values of the generating functions and their derivatives of  orders bounded from above in each term of the expansion). It is obtained by an iterative procedure. A similar iterative procedure defines the action of a thick morphism  on smooth functions, i.e., the pullback.   A distinctive feature of the pullback is that it is in general   a  nonlinear transformation.

This nonlinearity, first of all, forces  us  to distinguish between even functions and odd functions. There are two parallel constructions, of `even' and `odd' thick morphisms, corresponding to these two cases. Secondly, since the pullback of functions by a thick morphism of supermanifolds is in general nonlinear and in particular non-additive, it cannot be a ring homomorphism in the ordinary sense. This at the first glance contradicts the   philosophy of ``space-algebra duality''  according to which to ``spaces'' there correspond algebras (interpreted as algebras of functions) and to  maps of spaces  there correspond algebra homomorphisms  (with reversed direction).
However, it turns out that the   derivative  of the pullback  by a thick morphism, which is automatically a linear transformation, is the pullback  in the ordinary sense (by some perturbed map  between the source and target) and hence   is  an algebra homomorphism. This naturally suggests a ``nonlinear generalization'' of the notion of
algebra homomorphisms  and the corresponding generalization   of the algebra/geometry duality.    Such a generalization is yet to be explored. The author wishes to stress that his initial motivation was very concrete, namely, to obtain a method of construction of $L_{\infty}$-morphisms for homotopy Poisson structures\footnote{We   mean both homotopy Poisson and homotopy Schouten structures, i.e., the strongly homotopy versions   of even and odd Poisson brackets.} and that microformal geometry is indeed successful for that and other applications, such as to vector bundles and Lie algebroids.

Still, since we have obtained two new (formal) categories, in the   versions adapted to   even functions and to odd functions,  whose objects are supermanifolds, this inevitably  leads to  further questions. Such are, in particular: (1) extending the functoriality from functions to  other geometric objects such as e.g. differential forms; (2) if the previous is successful, obtaining, further, an action of thick morphisms (possibly nonlinear) on various cohomology spaces or, e.g.  on the Fukaya categories of the cotangent bundles; (3) by making use of  these larger classes of morphisms, to explore what, e.g., group objects in the ``thick'' sense would be, and what could be obtained by gluing by thick diffeomorphisms.

There are also other specific questions which can be addressed  in future studies.
For instance, is it possible to obtain a more efficient description of the power
expansions which specify the pullback and the composition; perhaps, by some graphic
calculus.

In   microformal geometry, particularly, in applications to homotopy Poisson structures, arises prominently the Hamilton--Jacobi equation: for example, in the form of  the infinitesimal action on functions   by thick diffeomorphisms~\cite{tv:oscil},
 \begin{equation*}
    f(x)\mapsto f(x)+\e H\bigl(x,\der{f}{x}\bigr)\,,
 \end{equation*}
also as an expression of the condition for a thick morphism between homotopy Poisson manifolds to be (homotopy) Poisson, and in the formula for à homological vector field on the space of functions~\cite{tv:nonlinearpullback}. Such prominence of the Hamilton--Jacobi equation in our constructions  together with its fundamental relation with the Schr\"{o}dinger equation in quantum mechanics, has led us to  building  the quantum version of microformal geometry. In it,   nonlinear pullbacks by ``classical'' thick morphisms are replaced by  Fourier integral operators of some special kind  (resembling the early version of such  operators studied by Fock, Vishik--\`{E}skin, Fedoryuk and Egorov in 1950s-1960s).  The ``classical'' thick morphisms (in the bosonic case) are   recovered from ``quantum''  in the limit  $\hbar\to 0$.
This may be seen in hindsight as an elucidation of the classical picture. Since the first motivation for microformal morphisms was to homotopy Poisson structures and their $L_{\infty}$-morphisms, it was natural to ask about a similar application of quantum thick morphisms. This has turned out to be indeed possible by replacing   master Hamiltonians by    Batalin--Vilkovisky type $\Delta$-operators (compare~\cite{tv:laplace1,tv:laplace2}).  We see here a fascinating interplay between homotopy algebras and some purely algebraic ideas on one hand with   very classical ideas  from partial differential equations, pseudodifferential operators and Fourier integral operators, on the other. Obviously, as well as in the classical version, there are plenty of questions for further study.

\appendix
\section{A version of the stationary phase formula}\label{sec.append}

Here we recall a general type stationary phase formula and give its particular version adapted for application to quantum thick morphisms considered in Sections~\ref{sec.quantum} and~\ref{sec.quantumhomot}. We are basically following Fedoryuk's approach~\cite{fedoryuk:pdo1971}, with some modification  and simplification (and    extending it to the super case). For a general type formula, we consider an integral of the form
\begin{equation}\label{eq.statphaseint}
    I_{\phi}(a)=\!\!\int\limits_{\,\,\,\,\R{n|2m}}\!\!\!\! Dx\;e^{\frac{i}{\hbar}\phi(x)} a(x)\,.
\end{equation}
Here $\hbar$ is a formal parameter and both functions $\phi(x)$ (called ``phase'') and $a(x)$ are assumed to be formal power series in $\hbar$ over nonnegative powers. For the simplicity of notation, this dependence on $\hbar$ is not indicated explicitly. It is assumed that $a(x)$ is compactly supported and the phase $\phi(x)$ has one stationary point on the support of $a(x)$. (Obviously a more general case is reduced to this one by using partitions of unity.) Denote this point by $x_0$. There is an expansion
\begin{equation}
    \phi(x)=\phi(x_0)+\frac{1}{2}d^2\phi(x_0)(x-x_0)+\phi^{+}(x;x_0)\,,
\end{equation}
where the function $\phi^{+}(x;x_0)$ has a zero of order $3$ at $x=x_0$. Assume that the quadratic form $d^2\phi(x_0)$ is nondegenerate (that is why we need the dimension $n|2m$). We rewrite the integral as
\begin{equation}\label{eq.statphaseintexpan}
    I_{\phi}(a)=e^{\frac{i}{\hbar}\phi(x_0)}\!\!\int\limits_{\,\,\,\,\R{n|2m}}\!\!\!\! Dx\;
    e^{\frac{i}{\hbar}\frac{1}{2}d^2\phi(x_0)(x-x_0)} \, \bigl(e^{\frac{i}{\hbar}\phi^{+}(x;x_0)} a(x)\bigr)\,,
\end{equation}
which, apart from the   factor, has the general form of
\begin{equation*}
    \int 
    \!   Dx\;
    e^{\frac{i}{\hbar}\frac{1}{2}Q(x-x_0)} \, u(x)\,,
\end{equation*}
where $Q(x-x_0)$ is a nondegenerate quadratic form and $u(x)$ some `test function'. (We    suppress  the domains of integration when convenient.) Any such integral can be expressed as an application of a (formal) differential operator, as follows. For an arbitrary function or a distribution $f(x)$, an equality holds:
\begin{equation}
    \int\!  Dx\, f(x_0-x) u(x) = \tilde f \Bigl(\frac{\hbar}{i}\der{}{x}\Bigr) u(x){\left|\vphantom{\int}\right.}_{x=x_0}\,,
\end{equation}
where $\tilde f(p)$ denotes the $\hbar$-Fourier transform of $f(x)$\,. Indeed, $f(x-x')$ and $\tilde f(p)$ are respectively the kernel and full symbol of a translationally-invariant operator. In particular, for a Gaussian oscillating exponential $E(x)=e^{\frac{i}{\hbar}\frac{1}{2}Q(x)}$ on $\R{n|2m}$, its $\hbar$-Fourier transform is
\begin{equation}
    \tilde E(p)= c_{n|2m,\hbar}\, \frac{e^{\frac{i\pi}{4}\sgn Q}}{\sqrt{|\Ber Q|}} \;e^{-\frac{i}{\hbar}\frac{1}{2}Q^{-1}(p)}\,,
\end{equation}
where $c_{n|2m,\hbar}=(2\pi \hbar)^{n/2}(i\hbar)^{-m}$. Here
we use   $Q$    both for the quadratic form $Q(x)=x^ax^bQ_{ba}$ and for its matrix $Q_{ab}$,   and   $\sgn Q$ is the signature (the difference of the numbers of positive and negative squares of the even variables in the canonical form).  By $Q^{-1}(p)=Q^{ab}p_bp_a$ we denote the induced quadratic form on the momentum space, where $(Q^{ab})=(Q_{ab})^{-1}$.   It is the superanalog of the familiar formula and can be obtained by a manipulation with   standard Gaussian integrals. Hence, for any function $u(x)$, we have
\begin{equation}
    \int\limits_{\,\,\,\,\R{n|2m}}\!\!\!
    Dx\;
    e^{\frac{i}{\hbar}\frac{1}{2}Q(x-x_0)} \, u(x)=
   c_{n|2m,\hbar}\, \frac{e^{\frac{i\pi}{4}\sgn Q}}{\sqrt{|\Ber Q|}} \;
   e^{-\frac{\hbar}{i}\frac{1}{2}Q^{-1}\bigl(\der{}{x}\bigr)}
    u(x) {\left|\vphantom{\int}\right.}_{x=x_0}\,.
\end{equation}
By applying this to the integral~\eqref{eq.statphaseintexpan}, we arrive at the following statement.
\begin{theorem}[a variant of {Fedoryuk~\cite[Thm. 2.3]{fedoryuk:pdo1971}}\,] \label{thm.varfedoryuk}
For the integral $I_{\phi}(a)$, under the assumptions and in the notation above, there is a formula
\begin{equation}\label{eq.statphaseanswer}
    I_{\phi}(a)=
    c_{n|2m,\hbar}\, \frac{e^{\frac{i\pi}{4}\sgn d^2\phi(x_0)}}{\sqrt{|\Ber d^2\phi(x_0)|}} \;
    e^{\frac{i}{\hbar}\phi(x_0)}\;
    \left(
     {e^{-\frac{\hbar}{i}\frac{1}{2}d^2\phi(x_0)^{-1}\bigl(\der{}{x}\bigr)}
    \Bigl(e^{\frac{i}{\hbar}\phi^{+}(x;x_0)} a(x)\Bigr)} {\left|\vphantom{\int}\right.}_{x=x_0}\right)\,,
\end{equation}
where the expression in the big brackets is  an expansion in nonnegative powers of $\hbar$ which equals   $a(x_0)\left(1+ O(\hbar)\right)$\ in the lowest order in $\hbar$. 
\end{theorem}
\begin{proof} All what is left to prove is the crucial observation that the result of the application of the operator $L=-\frac{\hbar}{i}\frac{1}{2}d^2\phi(x_0)^{-1}\bigl(\der{}{x}\bigr)$ and its powers to the oscillating function
\begin{equation*}
    u(x)=e^{\frac{i}{\hbar}\phi^{+}(x;x_0)} a(x)\,,
\end{equation*}
evaluated at $x=x_0$, does not contain negative powers of $\hbar$. This is because $\phi^{+}(x;x_0)$ has a zero of order three at $x=x_0$. Indeed, any derivative of order $r$ of the function $u(x)$ is a sum of monomials of the form $a^{(k)}(b')^{k_1}(b'')^{k_2}\dots (b^{(r)})^{k_r}$, where $b(x):=\phi(x;x_0)$, and by $a^{(k)}$, $b'$, $b''$, etc., we mean   partial derivatives in $x$ of degrees $k$, $1$, $2$, etc. We have
\begin{equation*}
    k+k_1+2k_2+3k_3+\dots + rk_r=r
\end{equation*}
and each such monomial carries a factor of $\hbar^{-1}$ in the power
\begin{equation*}
    k_1+ k_2+ k_3+\dots +  k_r
\end{equation*}
arising from differentiating the exponential $e^{\frac{i}{\hbar}b(x)}$\,. Consider $r=2s$ and let $k_1+ k_2+ k_3+\dots +  k_{2s}\geq s$. Then the monomial must contain derivatives $b'$ or $b''$. (If it doesn't, i.e., $k_1=k_2=0$, then $k_3+\dots +  k_{2s}\geq s$ and   the inequality $2s=k+k_1+2k_2+3k_3+\dots + 2sk_{2s}=k + 3k_3+\dots + 2sk_{2s}\geq  k + 3(k_3+\dots +  k_{2s}) \geq 3s$ holds, which is a contradiction.) Since $b'(x_0)=0$ and $b''(x_0)=0$, any partial derivative of $u(x)$ of degree $2s$ at $x=x_0$ may contain $\hbar^{-1}$ only in the powers $<s$. Hence $L^s u(x_0)$, for $s>0$, contains only positive powers of $\hbar$. Also $u(x_0)=a(x_0)$\,. So the expansion is as claimed.
\end{proof}

Now we consider a special case of integrals $I_{\phi}(a)$ where integration is over a $2n|2m$-dimensional space and the phase has the form
\begin{equation}\label{eq.ourphase}
    \phi(y,q)=S(q)-yq+\la g(y)\,.
\end{equation}
Here $\la$ is a formal parameter. The functions $S(q)$ and $g(y)$ may depend on other variables  not shown explicitly. In particular, they may be formal power series in $\hbar$. This type of phase function covers all the examples that we meet in Sections~\ref{sec.quantum}~and~\ref{sec.quantumhomot}: quantum pullback, composition of quantum thick morphisms, transformation of coordinates, and BV-morphisms. Let  $S(q)$ be a formal power series in $q_i$,
\begin{equation}\label{eq.expanofs}
    S(q)=S^0 +\f^iq_i+\frac{1}{2!}S^{ij}q_jq_i +\frac{1}{3!}S^{ijk}q_kq_jq_i +\ldots \
\end{equation}
(while $g(y)$ be a smooth function).
We write $yq$ for $y^iq_i$ and apply similar abbreviations. Integrals we are interested in  have the form
\begin{equation}\label{eq.ourstatphaseint}
    I_{\phi}(a)=\!\!\int\limits_{\,\,\,\,\R{2n|2m}}\!\!\!\! Dy\,\Dbar q\;e^{\frac{i}{\hbar}\phi(y,q)} a(y,q)\,,
\end{equation}
where   $\Dbar q=(2\pi\hbar)^{-n}(i\hbar)^{m}Dq$.  (Note that the factor is  exactly
$c_{2n|2m,\hbar}^{-1}$ in our notation.)

\begin{lemma} \label{lem.statphase}
For the phase $\phi(y,q)$ given by~\eqref{eq.ourphase}, there is a unique stationary point $(y_0,q^0)$, which is the (unique) solution of
the   equations
\begin{equation}\label{eq.statphaseq} 
     y^i= (-1)^{\itt}\der{S}{q_i}(q)\,,  \quad  q_i=\la\der{g}{y^i}\,(y)
\end{equation}
as   perturbation series in $\la$,
\begin{align}
    y^i_0&=\f^i+\la c^i_{(1)}+\frac{\la^2}{2!}c^i_{(2)} + \ldots \ ,   \label{eq.yzero}\\
    q^0_i&=\la \der{g}{y^i}(y_0)=\la \der{g}{y^i}\Bigl(\f +\la c_{(1)}+\frac{\la^2}{2!}c_{(2)} + \ldots \Bigr)\,, \label{eq.qzero}
\end{align}
where the coefficients $c_{(k)}$   are homogeneous polynomials of degrees $k$ in the derivatives of $g$ of orders $\leq k$ at $y=\f$\,, 
\begin{align*}
    c^i_{(1)} =S^{ij}\der{g}{y^j}(\f)\,, \quad
    c^i_{(2)} =S^{ij}S^{kl}\der{g}{y^l}(\f)\dder{g}{y^k}{y^j}(\f) + S^{ijk}\der{g}{y^k}(\f)\der{g}{y^j}(\f)\,, \quad \text{\emph{etc.}}
\end{align*}
The stationary value $\phi(y_0,q^0)=S^0+\la g(\f) +O(\la^2)$\,. The  matrix of $d^2\phi(y_0,q^0)$ is
\begin{equation}\label{eq.matrixQ}
   Q= \begin{pmatrix}
       \la \dder{g}{y^i}{y^j}(y_0) & -(-1)^{\itt}\delta_i{}^j \\
       - \delta^i{}_j  & \dder{S}{q_i}{q_j}(q^0)
     \end{pmatrix}\,.
\end{equation}
Therefore the stationary point $(y_0,q^0)$ is nondegenerate;    we have, for the Hessian,
\begin{equation}\label{eq.hessian}
    |\Ber d^2\phi(y_0,q^0)|=  \Ber \left(\delta_i{}^k-\la \dder{g}{y^i}{y^j}(y_0)\dder{S}{q_j}{q_k}(q^0)\right)=1+O(\la)\,,
\end{equation}
Also, $\sgn d^2\phi(y_0,q^0)= 0$\,.
\end{lemma}
\begin{proof} Equations~\eqref{eq.statphaseq} are 
obtained   by differentiating~\eqref{eq.ourphase}. They combine to give
\begin{equation*}
    y^{i}=(-1)^{\itt}\der{S}{q_i}\left(\la \der{g}{y}(y)\right)\,,
\end{equation*}
solvable by iterations,   giving a unique $(y_0,q^0)$ as in~\eqref{eq.yzero}, \eqref{eq.qzero} with the claimed properties. (Compare~\cite[\S 1]{tv:nonlinearpullback}.)
The expression~\eqref{eq.matrixQ} for the Hesse matrix $Q$ is obtained directly. (For the relevant tensor notation and quadratic forms in the supercase see e.g.~\cite{tv:volumes}.) Its invertibility is  clear from considering it in the zeroth order in $\la$. Equation~\eqref{eq.hessian} for $\Ber Q$ is obtained by multiplying the matrix $Q$ by a matrix
$J=\left(\begin{smallmatrix}0 & -\delta^j{}_k\\
-\delta_j{}^k(-1)^k & 0
\end{smallmatrix}\right)$ with   $\Ber J =\pm 1$\,,
which gives $QJ=\left(\begin{smallmatrix}\delta_i{}^k & -\la g_{ik}\\
-s^{ik}(-1)^{\kt} & \delta^i{}_k
\end{smallmatrix}\right)$,
where $g_{ij}=\dder{g}{y^i}{y^j}(y_0)$, $s^{ij}=\dder{S}{q_i}{q_j}(q^0)$,
and applying to the result the  formula for the Berezinian of a block matrix (analogous to the well-known formula for $\det$). To see that the signature of $Q=d^2\phi(y_0,q^0)$ is zero,   set  $\la=0$ and notice that by a linear change of the variables $y^i$ the   form can be brought to $Q= z^iq_i$.
\end{proof}

Combining Lemma~\ref{lem.statphase} with Theorem~\ref{thm.varfedoryuk}, we immediately obtain the desired statement:
\begin{theorem} \label{thm.ourstatphase}
For $\phi(y,q)= S(q)-yq +\la g(y)$ as in~\eqref{eq.ourphase},  we have
\begin{multline}\label{eq.ourstatphase}
   I_{\phi}(a)=\!\!\int\limits_{\,\,\,\,\R{2n|2m}}\!\!\!\! Dy\,\Dbar q\;e^{\frac{i}{\hbar}\bigl(S(q)-yq+\la g(y)\bigr)} a(y,q)= \\
   e^{\frac{i}{\hbar}\phi(y_0,q^0)}\; b_0^{-\frac{1}{2}} \;
    \left(
     {e^{-\frac{\hbar}{i}\frac{1}{2} L(\der{}{y},\der{}{q})}
    \Bigl(e^{\frac{i}{\hbar}\phi^{+}(y,q;y_0,q^0)} a(y,q)\Bigr)}{\left|\vphantom{\int}\right.}_{y=y_0, q=q^0}\right)\,.
\end{multline}
Here  $(y_0,q^0)$ is the stationary point given by ~\eqref{eq.statphaseq}, \eqref{eq.yzero}, \eqref{eq.qzero}. The function   $\phi^{+}(y,q;y_0,q^0)$ is as above. The matrix $L=Q^{-1}$ is the inverse for  $Q$ given by~\eqref{eq.matrixQ}, so that
\begin{equation*}
    L\Bigl(\der{}{y},\der{}{q}\Bigr)=L^{ij}\der{}{y^i}\der{}{y^j}+2L^{i}{}_j\der{}{q_j}\der{}{y^i} + L_{ij}\der{}{q_j}\der{}{q_i}\,,
\end{equation*}
and $b_0=|\Ber Q|$   given by~\eqref{eq.hessian}. \qed
\end{theorem}

Note that $\phi(y_0,q^0)$ in~\eqref{eq.ourstatphase} has the form $\phi(y_0,q^0)=\phi_0 +O(\hbar)$,
where $\phi_0$ is the stationary phase value for $\phi_0(y,q)$ when $\hbar\to 0$. Also, $b_0=b_{00}+O(\hbar)$, where $b_{00}$ is invertible, hence the Hessian factor can be moved to the phase as a term of order $\geq 1$ in $\hbar$. Finally, since the expression in the big brackets in~\eqref{eq.ourstatphase} has the form $a_0+O(\hbar)$, where $a_0$ is the `classical limit'  of $a(y_0,q^0)$ when $\hbar\to 0$,  we may say that
\begin{equation}
    I_{\phi}(a) = e^{\frac{i}{\hbar}\left(\phi_0 + O(\hbar)\right)} \bigl(a_0+O(\hbar)\bigr)\,.
\end{equation}
In particular, if $a\equiv 1$, then $I_{\phi}(1)= e^{\frac{i}{\hbar}\left(\phi_0 + O(\hbar)\right)}$\,.
From the construction, we also see that both the phase and the amplitude of the integral $I_{\phi}(a)$   are formal power series in $\la$,  which plays the role of a `coupling constant' (if we   borrow  the physicist' term).  We do not use  $\la$ explicitly in the main text, speaking instead of expansions in the powers of the derivatives of the function $g$.


\bigskip

\textsc{Acknowledgements.} The author  thanks Yu.~Yu.~Berest, V.~M.~Buchstaber, A.~Cattaneo, M.~A.~Grigoriev, H.~M.~Khudaverdian, S.~P.~Novikov, A.~V.~Odesskii, A.~G.~Sergeev, M.~A.~Vasiliev, A.~A.~Voronov,  and A.~Weinstein for     discussion. He is  particularly grateful to Yvette~Kosmann-Schwarzbach, Kirill~Mackenzie, and Jim~Stasheff who read earlier versions of the text and made numerous valuable comments.


\end{document}